\newcommand{\ca}{\mathcal}
\newtheorem{defi}{Definition}[section]
\newtheorem{theo}[defi]{Theorem}
\newtheorem{lem}[defi]{Lemma}
\newtheorem{con}[defi]{Conjecture}
\newtheorem{obs}[defi]{Observation}
\newtheorem{cor}[defi]{Corollary}
\newtheorem{prob}[defi]{Problem}
\theoremstyle{remark}
\title{Pairwise disjoint perfect matchings in $r$-edge-connected $r$-regular graphs}
\author{Yulai Ma$^1$\thanks{Supported by Sino-German (CSC-DAAD) Postdoc Scholarship Program 2021 (57575640)}, Davide Mattiolo$^2$\thanks{Supported by a Postdoctoral Fellowship of the Research Foundation Flanders (FWO), project number 1268323N}, Eckhard Steffen$^1$, Isaak H.~Wolf$^1$ \thanks{Funded by the Deutsche Forschungsgemeinschaft (DFG, German Research Foundation) – 445863039} \\
\footnotesize
$^1$ Department of Mathematics, Paderborn University, Warburger Str.\ 100, 33098 Paderborn,
Germany.
\\
\footnotesize
$^2$ Department of Computer Science, KU Leuven Kulak, 8500 Kortrijk, Belgium.
\\ 
\footnotesize yulai.ma@upb.de, davide.mattiolo@kuleuven.be, es@upb.de, isaak.wolf@upb.de}
\date{}
\begin{document}

\maketitle

\begin{abstract}
Thomassen [Problem 1 in Factorizing regular graphs, \emph{J. Combin. Theory Ser. B}, 141 (2020), 343-351] asked whether 
every $r$-edge-connected $r$-regular graph of even order has $r-2$
pairwise disjoint perfect matchings. We show that this is not the case
if $r \equiv 2 \text{ mod } 4$. Together with a recent result of Mattiolo and Steffen [Highly edge-connected regular graphs without large factorizable subgraphs, \emph{J. Graph Theory}, 99 (2022), 107-116]
this solves Thomassen's problem for all even $r$.

It turns out that our methods are limited to the even case of Thomassen's problem. We then prove some equivalences of statements on pairwise disjoint perfect matchings in highly edge-connected regular graphs, where the perfect matchings contain or avoid fixed sets of edges. 

Based on these results we relate statements on pairwise disjoint perfect matchings
of 5-edge-connected 5-regular graphs to well-known conjectures for cubic graphs, such as the Fan-Raspaud Conjecture, the Berge-Fulkerson Conjecture and the $5$-Cycle Double Cover Conjecture.
\end{abstract}

{\bf Keywords:} perfect matchings, regular graphs, factors, $r$-graphs, edge-colorings, class $2$ graphs

\section{Introduction and motivation}
We consider finite graphs that may have parallel edges but no loops. A graph without parallel edges is called \emph{simple}.
Let $r \geq 0$ be an integer. A graph $G$ is $r$-\emph{regular} if every vertex has degree $r$, where the \emph{degree} $d_G(v)$ of a vertex $v$ is the number of edges that are incident with $v$. An $r$-\emph{graph} is an $r$-regular graph $G$, where
every odd set $X \subseteq V(G)$ is connected by at least $r$ edges to its complement
$V(G) \setminus X$.
For $k \in \{1, \dots, r\}$, a $k$-\emph{factor} of $G$ is a spanning $k$-regular subgraph of $G$. The edge set of a $1$-factor is called a \emph{perfect matching} of $G$. Moreover, for $k\ge 2$, a $k$-\emph{PDPM} of an $r$-regular graph $G$ is a set of $k$ pairwise disjoint perfect matchings of $G$.

An $r$-regular graph is \emph{class 1} if it has an $r$-PDPM. 
Otherwise, it is \emph{class 2}.  
On one side, dense simple $r$-regular graphs are class $1$ \cite{csaba2016proof}. 
On the other side, for every $r \ge 3$ there are $(r-2)$-edge-connected $r$-regular graphs of even order without a perfect matching, see for example \cite{factors_and_factorizations_book} p.~47 ff.

Every $r$-graph has a perfect matching \cite{seymour1979multi} and we call an
$r$-graph \emph{poorly matchable} if any two perfect matchings intersect. 
Every class $2$ $3$-graph is poorly matchable. A natural question is
what is the maximum number $t$ such that every $r$-graph has a $t$-PDPM? 
For $r$-graphs, the answer is given by
Rizzi \cite{rizzi1999indecomposable}, who showed that for every $r \geq 3$, there are 
poorly matchable $r$-graphs. However, all graphs he constructed
have a 4-edge-cut. It seems that the situation changes for
highly edge-connected $r$-graphs.

Let $m(r)$ be the maximum number $t$ such that every $r$-edge-connected
$r$-graph has $t$ pairwise disjoint perfect matchings. This gives rise to the following 
problem.

\begin{prob} \label{mainProb}
	Determine $m(r)$ for all $r \geq 2$. 
\end{prob}

Clearly,  $m(r) \geq 1$, $m(2)=2$, and $m(3) = 1$. Furthermore, $m(4)=1$ by the result of Rizzi \cite{rizzi1999indecomposable}.  Thomassen \cite{THOMASSEN2020343} conjectured that 
there exists a natural number $r_0$ such that $m(r)\geq 2$ for every $r \geq r_0$.
In other words, he conjectured that for all $r \geq r_0$, there are no poorly matchable
$r$-edge-connected $r$-graphs. Thus, $r_0 \geq 5$. 

Class 1 graphs are of no interest for the study of $m(r)$. 
They have $r$ pairwise disjoint perfect matchings.
For $r \geq 3$, class $2$ $r$-graphs have at most an $(r-2)$-PDPM.
Thus, 
$m(r) \leq r-2$, if there is an $r$-edge-connected class $2$ $r$-graph.
Note that $r$-edge-connected class $2$ $r$-graphs are known for
all $r \geq 3$ and $r \not= 5$. 
The case $r=5$ 
is briefly addressed at the end of this section and in Section \ref{sec:disjoint_pm_5_reg_graphs}. 

Thomassen (Problem 1 of \cite{THOMASSEN2020343}) proposed the following question
for the value of $m(r)$. 

\begin{prob}[Thomassen \cite{THOMASSEN2020343}] \label{Thom-r-2}
	For all $r \geq 3$, is it true that $m(r) = r-2$?
\end{prob}

Problem \ref{Thom-r-2} is solved for $r\equiv 0 \mod4$ in \cite{Mattiolo2022HighlyER}.
It is shown that $m(r) \leq r-3$ in this case. 
In Section \ref{Sec: m(4t+2)} we prove the following statement. 

\begin{theo}\label{4k+1con}
	If $r\equiv 2 \mod4$, then $m(r) \leq r-3$. 
\end{theo}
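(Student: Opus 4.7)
The goal is to exhibit, for each $r = 4k+2 \geq 6$, an $r$-edge-connected $r$-regular graph $G$ on evenly many vertices which admits no $(r-2)$-PDPM; since every $r$-edge-connected $r$-regular graph is an $r$-graph, such a $G$ witnesses $m(r) \leq r-3$. The plan is to build $G$ as a controlled blow-up of a small cubic seed graph $H$ equipped with a distinguished $3$-edge-cut $\partial_H(X_0)$ with $|X_0|$ odd (the Petersen graph or a snark being natural candidates). Each vertex of $H$ is to be replaced by a local gadget of degree $r$ and each edge of $H$ by an edge-bundle, calibrated so that the resulting multigraph is exactly $r$-regular and so that the image of $X_0$ becomes an odd vertex set $X$ with $|\partial_G(X)|=r$. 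Since in general $3 \nmid r$, a uniform edge-multiplication cannot work: the construction must combine a cubic backbone inherited from $H$ with an additional $(r-3)$-regular layer glued on in a cut-respecting way, and this is precisely where the case $r \equiv 2 \pmod 4$ demands a different layer from the one used in \cite{Mattiolo2022HighlyER} for $r \equiv 0 \pmod 4$.

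The central step is a parity count at the distinguished cut $\partial(X)$. Suppose for contradiction that $M_1, \dots, M_{r-2}$ are pairwise disjoint perfect matchings of $G$ and $F = E(G) \setminus \bigcup_i M_i$ is the residual $2$-factor. Then $|F \cap \partial(X)|$ is even (as $F$ is a disjoint union of cycles) and each $|M_i \cap \partial(X)|$ is odd (as $|X|$ is odd), so the identity
\[
4k+2 \;=\; |\partial(X)| \;=\; |F \cap \partial(X)| \;+\; \sum_{i=1}^{r-2} |M_i \cap \partial(X)|,
\]
combined with $|M_i \cap \partial(X)| \geq 1$, leaves only two feasible profiles: either all $|M_i \cap \partial(X)| = 1$ and $|F \cap \partial(X)| = 2$, or exactly one $|M_i \cap \partial(X)|$ equals $3$, all others equal $1$, and $|F \cap \partial(X)| = 0$. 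The gadget on the $X$-side is to be designed so that a perfect matching of $G$ using only one edge of $\partial(X)$ would require a near-perfect matching of $G[X]$ of a parity class that the gadget forbids; a second, symmetrically placed cut $\partial(Y)$ of size $r$ will eliminate the "$3\,{+}\,1\,{+}\,\cdots\,{+}\,1$" profile by the same argument applied on the other side.

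The main obstacle is arranging the gadgets so that three conditions are met simultaneously: (i) $G$ is genuinely $r$-edge-connected, verified by reducing every cut of $G$ either to a cut entirely inside a single gadget (where the design guarantees size at least $r$) or to the image of a cut of $H$ scaled by a factor at least $r/3$; (ii) the distinguished cuts $\partial(X)$ and $\partial(Y)$ genuinely have size exactly $r$ with odd side; and (iii) the internal matching structure of the gadget precludes the near-perfect matchings that would keep the parity equation solvable. Condition (iii) is where the residue $r \equiv 2 \pmod 4$ enters essentially and where the Mattiolo--Steffen construction cannot be reused verbatim. I expect the interplay between (i) and (iii) to be the hardest part, since any local strengthening of the gadget that rules out more near-perfect matchings on $X$ tends to create new small odd cuts, threatening the $r$-edge-connectivity required for the cut identity above to be legitimate.
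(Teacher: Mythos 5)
Your proposal correctly identifies the overall shape of the argument --- exhibit a $(4k+2)$-edge-connected $(4k+2)$-regular graph of even order with no $4k$-PDPM, and derive a contradiction from the parity of $|M\cap\partial(X)|$ over odd sets $X$ (Observation \ref{odd-insect}) --- and your enumeration of the two feasible intersection profiles at an odd cut of size exactly $r$ is correct. But the argument stops exactly where the real work begins: the gadget whose ``internal matching structure precludes the near-perfect matchings that would keep the parity equation solvable'' is never constructed, and you yourself flag conditions (i)--(iii) as unresolved. As it stands there is no graph, hence no proof.

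Moreover, the framework you set up is not the one that closes in the paper, and it is doubtful that it closes at all in this form. At a cut of size exactly $r$ with odd side, the profile in which every $M_i$ crosses once and the residual $2$-factor crosses twice is precisely what happens in every benign example (e.g.\ $X$ a single vertex), so no parity obstruction remains and you would need a genuinely structural argument inside the gadget --- which is why you keep running into tension with $r$-edge-connectivity. The paper instead works at a cut $\partial(\{x_i,y_i,z_i\})$ of size $6k+4>r$, and the decisive input is a rigidity statement for perfect matchings of the Petersen graph with added perfect matchings (Lemma \ref{lem:P+matchings}): the gadget $Q_k$, built from two such blocks sharing a cut vertex, forces any $4k$ pairwise disjoint perfect matchings to use exactly $2k$ of the $2k+1$ edges in each attachment bundle (Lemma \ref{lem:Qk}). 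Summing over the three bundles plus the edge $wz_i$ gives $|N\cap\partial(\{x_i,y_i,z_i\})|=6k+1$, which is odd, while Observation \ref{odd-insect} forces this quantity to be even --- a contradiction requiring no case analysis of profiles. Note also that the resulting graphs necessarily have $2$-vertex-cuts (as remarked after Corollary \ref{cor: disprove Thomassen even}), so the connectivity tension you worry about is resolved by accepting low vertex-connectivity while keeping edge-connectivity $4k+2$; by Observation \ref{rr-3conn} this escape route exists only for even $r$. To complete a proof along your lines you would need to supply, at minimum, a concrete gadget together with a lemma playing the role of Lemma \ref{lem:P+matchings}.
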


Together with the results of \cite{Mattiolo2022HighlyER, rizzi1999indecomposable} 
we obtain the following corollary. 

\begin{cor} \label{cor: disprove Thomassen even}
	If $r > 2$ is even, then $m(r) \leq r-3$.
\end{cor}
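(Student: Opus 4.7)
The plan is a case analysis on $r \bmod 4$, since every even $r > 2$ satisfies either $r \equiv 0 \pmod{4}$ or $r \equiv 2 \pmod{4}$. Both residue classes are already covered by results in hand: Theorem \ref{4k+1con}, just stated, handles the case $r \equiv 2 \pmod{4}$, while the construction of \cite{Mattiolo2022HighlyER}, cited in the introduction, handles $r \equiv 0 \pmod{4}$. The corollary is therefore essentially an assembly statement.

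Concretely, I would first dispose of the case $r \equiv 2 \pmod{4}$ by directly quoting Theorem \ref{4k+1con}, which gives $m(r) \leq r-3$ for every such $r$. For $r \equiv 0 \pmod{4}$, I would invoke the result of \cite{Mattiolo2022HighlyER}, which furnishes, for every such $r$, an $r$-edge-connected $r$-regular graph of even order admitting no $(r-2)$-PDPM, yielding $m(r) \leq r-3$ in this residue class as well. The small case $r = 4$ is additionally confirmed by Rizzi's construction \cite{rizzi1999indecomposable}, which gives $m(4) = 1 = 4 - 3$, matching the bound and accounting for the joint citation in the statement.

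The main (indeed only) obstacle is not in the corollary itself but in the preceding Theorem \ref{4k+1con}, whose proof carries the substantive content; once that theorem is available, the corollary follows immediately from the partition of even $r>2$ into the two residue classes modulo $4$.
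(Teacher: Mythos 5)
Your proposal is correct and follows exactly the route the paper takes: the corollary is an assembly of Theorem \ref{4k+1con} for $r \equiv 2 \pmod 4$ with the result of \cite{Mattiolo2022HighlyER} for $r \equiv 0 \pmod 4$ and Rizzi's construction \cite{rizzi1999indecomposable} for $r=4$. Nothing further is needed.
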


The graphs that prove Corollary \ref{cor: disprove Thomassen even} have a 2-vertex-cut. 
It is easy to see that for odd $r$, an $r$-edge-connected $r$-graph
is $3$-vertex-connected. This shows that our methods are limited to the case when 
$r$ is even. 

Thus, the main motivation for Section \ref{sec:Equivalences} is the study of
Problems \ref{mainProb} and \ref{Thom-r-2} for odd $r$. We prove that 
every $r$-edge-connected $r$-graph has 
$k \in \{2, \dots, r-2\}$ pairwise
disjoint perfect matchings if and only if 
every $r$-edge-connected $r$-graph has $k$ pairwise disjoint perfect matchings that contain (or that avoid) a fixed edge. 
For odd $r$,
we prove the stronger statement that 
every $r$-edge-connected $r$-graph has an $(r-2)$-PDPM if and only if 
for every $r$-edge-connected $r$-graph and every $\lfloor \frac{r}{2} \rfloor$ 
adjacent edges, there is 
an $(r-2)$-PDPM of $G$ containing all $\lfloor \frac{r}{2} \rfloor$ edges. 

In Section \ref{sec:disjoint_pm_5_reg_graphs} these results are used to
prove tight connections between (possible) answers to
Problem \ref{mainProb} for $r=5$ and some well-known conjectures 
on cubic graphs. In particular, we prove the following statement. 

\begin{theo}\label{FR-m5}
	If $m(5) \geq 2$, then the Fan-Raspaud Conjecture holds. Moreover, if $m(5) = 5$, then both the $5$-Cycle Double Cover Conjecture and the Berge-Fulkerson Conjecture hold.
\end{theo}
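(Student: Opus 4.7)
The plan is to associate to each bridgeless cubic graph $G$ a $5$-edge-connected $5$-graph $H$ and to extract the required PM structures on $G$ from PM structures of $H$. Concretely, let $H$ be the graph obtained from two vertex-disjoint copies $G_1,G_2$ of $G$ by adding two parallel edges between $(v,1)$ and $(v,2)$ for each $v\in V(G)$. Then $H$ is $5$-regular; a short parity-and-cut computation, using $|\partial_G X|\equiv |X| \pmod 2$ in cubic graphs, shows that $H$ is always a $5$-graph and moreover $5$-edge-connected provided $G$ is cyclically $4$-edge-connected. As the Fan-Raspaud, $5$-Cycle Double Cover and Berge-Fulkerson Conjectures are all known to reduce to the cyclically $4$-edge-connected case, one may assume this restriction without loss of generality.

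Any perfect matching $M$ of $H$ decomposes as $(\phi(M),M^{(1)},M^{(2)})$, where $\phi(M)\subseteq V(G)$ is the set of vertices matched via a parallel edge (of even cardinality), and $M^{(1)},M^{(2)}\subseteq E(G)$ are matchings perfect on $V(G)\setminus\phi(M)$. Assume $m(5)\geq 2$ and apply this to $H$ to obtain two disjoint PMs $M_1,M_2$; disjointness forces $M_1^{(i)}\cap M_2^{(i)}=\emptyset$ in $E(G)$ for $i=1,2$. The subgraph $M_1\cup M_2\subseteq H$ is a disjoint union of even alternating cycles which, upon projection to $G$, allow one to reassemble $M_1^{(1)},M_1^{(2)},M_2^{(1)},M_2^{(2)}$ into three perfect matchings $P_1,P_2,P_3$ of $G$ with $P_1\cap P_2\cap P_3=\emptyset$. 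The equivalences of Section~\ref{sec:Equivalences} may be invoked beforehand to fix or forbid chosen parallel edges in the $M_i$, which restricts the cross-vertex sets $\phi(M_i)$ and controls the reassembly.

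Assume now $m(5)=5$, so $H$ is class $1$ and its edge set is partitioned into five PMs $N_1,\dots,N_5$. For each $e\in E(G)$ let $i(e),j(e)$ denote the colors assigned to its two copies in $H$. The subgraphs $F_c:=N_c^{(1)}\triangle N_c^{(2)}\subseteq E(G)$ are even subgraphs, and an edge $e$ lies in exactly two of $F_1,\dots,F_5$ precisely when $i(e)\neq j(e)$. Hence, if the $5$-edge-coloring of $H$ is chosen so that $i(e)\neq j(e)$ for every $e\in E(G)$, then $(F_1,\dots,F_5)$ is a $5$-cycle double cover of $G$, establishing the $5$-CDC Conjecture. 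Such a pair-distinguishing coloring is produced by a Kempe-chain swap along bichromatic cycles of $H$, assisted by the equivalences of Section~\ref{sec:Equivalences}, which allow fixing or avoiding specified parallel edges and so controlling which monochromatic pairs appear. For the Berge-Fulkerson Conjecture, from the same colored structure one extracts six perfect matchings of $G$, each edge being in exactly two, by extending each restriction $N_c^{(i)}$ to a PM of $G$ through a matching of $G[\phi_c]$ and appropriately combining the resulting PMs.

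The main obstacle is the two reassembly steps. In the Fan-Raspaud argument, the four matchings $M_i^{(k)}$ only cover $V(G)\setminus\phi_i$, so combining them into three actual PMs of $G$ with empty common intersection requires careful tracking of the alternating cycles of $M_1\cup M_2$ as they cross the parallel edges above $\phi_1\cup\phi_2$; this is where the flexibility afforded by Section~\ref{sec:Equivalences} becomes essential. In the $5$-CDC and Berge-Fulkerson implications, the crucial point is to ensure the pair-distinguishing property $i(e)\neq j(e)$ for all $e\in E(G)$ simultaneously and to argue that the Kempe-swap procedure terminates without re-introducing monochromatic pairs, again leaning on the equivalences of Section~\ref{sec:Equivalences} to start from a coloring with enough of the property already in place.
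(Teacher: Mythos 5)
Your auxiliary graph $H$ (two copies of $G$ joined by doubled parallel edges) is indeed a $5$-edge-connected $5$-graph when $G$ is $3$-edge-connected, but this construction defeats the purpose: a perfect matching of $H$ restricted to one copy of $G$ is only a matching covering $V(G)\setminus\phi$, not a perfect matching of $G$, and all three of your extraction steps founder on this. For Fan--Raspaud you assert that the alternating circuits of $M_1\cup M_2$ let you ``reassemble'' the four partial matchings $M_i^{(k)}$ into three perfect matchings of $G$ with empty common intersection, but no procedure is given and none is apparent: a matching covering $V(G)\setminus\phi$ for an even set $\phi$ need not extend to a perfect matching at all, and the circuits of $M_1\cup M_2$ live in $H$, where their projections to $G$ overlap arbitrarily. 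For the $5$-CDC you need a proper $5$-edge-colouring of $H$ in which the two copies of every $e\in E(G)$ receive distinct colours; since at each $v$ the three copy-$1$ edges and the three copy-$2$ edges necessarily carry the same colour set, this is a simultaneous derangement condition over all of $E(G)$, and ``Kempe swaps assisted by Section~\ref{sec:Equivalences}'' is not an argument that it can be achieved. The Berge--Fulkerson step likewise requires extending each $N_c^{(i)}$ by a perfect matching of $G[\phi_c]$, which need not exist, and then reducing ten matchings to six with each edge covered exactly twice, which is not justified. In each case the unproved step is the entire mathematical content.

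The paper avoids all of this by choosing auxiliary $5$-graphs whose perfect matchings project \emph{directly} to the desired objects. For Fan--Raspaud it takes $H=G+E(F)$, where $F$ is a $2$-factor meeting every $3$- and $4$-edge-cut (Theorem~\ref{2-factor-theo}), so that $H$ is $5$-edge-connected, every perfect matching of $H$ projects to a perfect matching of $G$, and two disjoint ones together with $E(G)\setminus E(F)$ form an $FR$-triple; Theorem~\ref{eq-st1} is used only to control the multiplicity of the distinguished edge. For the $5$-CDC it expands each vertex of $2G$ into a fixed $5$-regular gadget (a modified $4$-wheel) so that the five perfect matchings of the resulting $5$-graph automatically restrict to even subgraphs of $G$ covering each edge exactly twice, with no colouring condition to arrange; for Berge--Fulkerson it again uses $G+E(F)$. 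To salvage your approach you would need to replace the two-copy construction by one with this direct-projection property.
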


The condition $m(5)=5$ of the second statement of Theorem \ref{FR-m5} 
seems to be surprising since it is equivalent to saying that every $5$-edge-connected $5$-regular graph is class $1$. So far, we have not succeeded in constructing 
$5$-edge-connected $5$-regular class $2$ graphs. Also,
intensive literature research and computer-assisted searches in
graph databases did not lead to the desired success. 
We conclude this introduction with the following problem, which surprisingly seems to be unsolved. 

\begin{prob}\label{prob:5_reg_class_2}
	Is there any $5$-edge-connected $5$-regular class $2$ graph?
\end{prob}

For planar graphs, the answer to the above question is ``no''. 
Guenin \cite{guenin2003packing} proved that all planar 5-graphs are class $1$. 
Indeed, it is conjectured by Seymour \cite{seymour1979multi} that every planar $r$-graph is class $1$.
So far, this conjecture is proved to be true for all $r \leq 8$, see \cite{planar8-graphsclass1} also for further references.

\section{Basic definitions and results}

In this section we introduce some definitions and tools. For undefined notation and terminology the readers are referred to \cite{Bondy2008}.

If $u,v$ are two vertices of a graph $G$, we denote by $\mu_G(u,v)$ the number of parallel edges connecting $u$ and $v$. 
In case $\mu_G(u,v)=1$ we also say that $e=uv$ is a simple edge.
The 
\emph{underlying graph} of $G$ is the simple graph 
$H$ with $V(H)=V(G)$ and $\mu_H(u,v)=1$ if and only if $\mu_G(u,v)  \geq 1$.  
For any two disjoint sets $U,  W\subseteq V(G)$, denote by 
$[U, W]_G$  the set of  edges with exactly one endpoint in each of  $U$ and $W$. For convenience, we simply write $[u,W]_G$ for $[U, W]_G$ if $U= \{u\}$, and just write $\partial_G(U)$ if $W = V(G)\setminus U$.
Moreover, $\partial_G(U)$ is called an {\em edge-cut} of $G$.
The index $G$ is sometimes omitted if there is no ambiguity. 
The graph induced by $U$ is denoted by $G[U]$.  Similarly, we use $G-U$ instead of  $G[V(G)\setminus U]$, and use $G-u$ instead of $G-\{u\}$ if $U= \{u\}$. A {\em $k$-circuit} is a circuit of length $k$ and 
{\em a cycle} is a union of pairwise edge-disjoint circuits.
A set of $k$ vertices, whose removal increases the number of components of a graph, is called a {\em $k$-vertex-cut}.
A graph $G$ on at least $k+1$ vertices is called {\em $k$-connected} if $G-X$ is connected for every $X\subseteq V(G)$ with $|X|\leq k-1$.
Similarly, a graph $G$ is called {\em $k$-edge-connected} if for every non-empty $X \subset V(G), |\partial_G(X)| \ge k$. Moreover, a graph $G$ is called {\em cyclically $k$-edge-connected} if $G-S$ has at most one component containing a circuit for any edge-cut $S \subseteq E(G)$ with $|S|\leq k-1$.

A \emph{multiset} $\ca M$ consists of objects with possible repetitions. We denote by $|\ca M|$ the number of objects in $\ca M$. For a positive integer $k$, we define $k\ca M$ to be the multiset consisting of $k$ copies of each element of $\ca M$.
Let $G$ be a graph and $N$ a multiset of edges of the complete graph on $V(G)$. The graph $G+N$ is obtained by adding a copy of all edges of $N$ to $G$. This operation might generate parallel edges. More precisely, if $N$ contains exactly $t$ edges connecting the vertices $u$ and $v$ of $G$, then $\mu_{G+N}(u,v)=\mu_G(u,v)+t$.

We will frequently use the following simple fact.

\begin{obs}\label{odd-insect}
	Let $G$ be a graph with a perfect matching $M$. For any subset $X\subseteq V(G)$, if $|X|$ is odd, then $|\partial_G (X)\cap M|$ is odd.
\end{obs}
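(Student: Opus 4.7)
The plan is a one-line parity count. Since $M$ is a perfect matching, every vertex of $X$ is incident with exactly one edge of $M$. I would split the edges of $M$ that meet $X$ into two types: the \emph{internal} ones, having both endpoints in $X$, and the \emph{boundary} ones, which are precisely the edges of $\partial_G(X)\cap M$. Counting the incidences between vertices of $X$ and edges of $M$, each internal edge contributes $2$ and each boundary edge contributes $1$, so
\[
|X| \;=\; 2\,|\{e\in M : e\subseteq X\}| \;+\; |\partial_G(X)\cap M|.
\]
Reducing modulo $2$ gives $|X|\equiv |\partial_G(X)\cap M|\pmod 2$, and in particular $|\partial_G(X)\cap M|$ is odd whenever $|X|$ is odd.

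There is no real obstacle: the statement is just the standard handshake identity applied to the subgraph $(V(G),M)$ restricted to the cut $(X,V(G)\setminus X)$. The only thing to be a bit careful about is the bookkeeping that ensures every edge of $M$ incident with $X$ falls into exactly one of the two classes above, which is immediate from the definitions of $\partial_G(X)$ and of an induced subgraph.
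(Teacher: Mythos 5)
Your proof is correct: the double count of incidences between vertices of $X$ and edges of $M$ gives $|X| = 2\,|\{e\in M : e\subseteq X\}| + |\partial_G(X)\cap M|$, which settles the parity claim. The paper states this observation without proof as a "simple fact," and your argument is exactly the standard justification one would supply.
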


Many results in the following sections rely on the properties of the Petersen graph, which will be denoted by $P$ throughout this paper. We need to fix a drawing of $P$ and consider its six perfect matchings.  Let $v_1\dots v_5v_1$ and $u_1u_3u_5u_2u_4u_1$ be the two disjoint $5$-circuits of $P$ such that $u_iv_i\in E(P)$ for each $i\in\{1,\ldots,5\} $. Set $M_0=\{u_iv_i: i \in \{1, \dots,5\}\}$. Clearly, $M_0$ is a perfect matching of the Petersen graph. For each $i\in\{1,\dots,5\}$, let $M_i$ be the only other perfect matching containing $u_iv_i$, see Figure \ref{fig:pm_P}.

\begin{figure}[htbp]
\centering
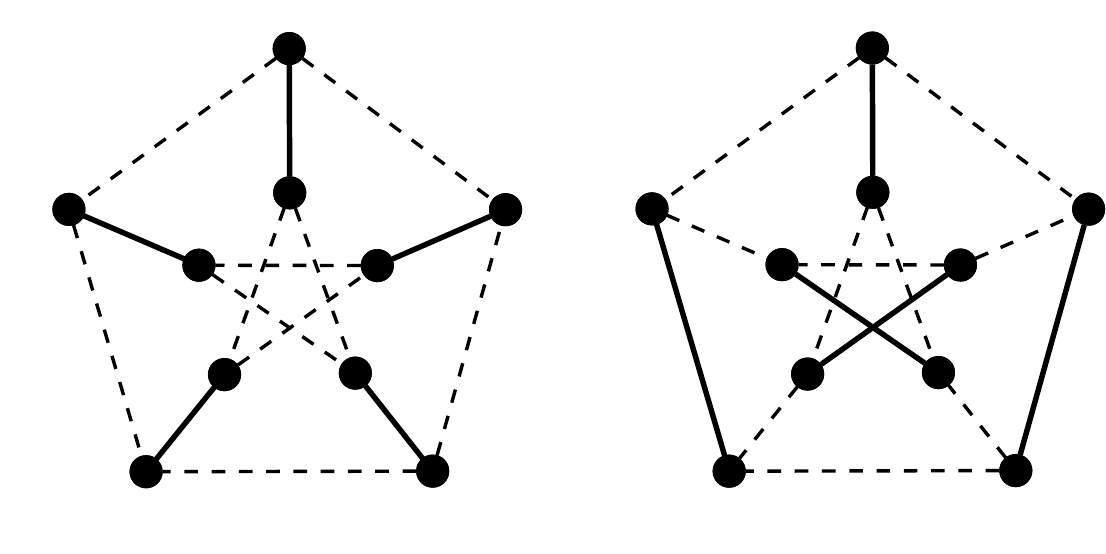
\caption{The $6$ perfect matchings of the Petersen graph.}
\label{fig:pm_P}
\end{figure}

Let $P^{\ca M}=P+\sum_{j=1}^k N_j$ be the graph obtained from $P$ and $\ca M$, where $\ca M = \{N_1,\dots,N_k\}$ is a multiset of perfect matchings of $P$.
For a perfect matching $N$ of 
$P^{\ca M}$,  if it is a copy of $M_j$ of $P$, then we say that $N$ is of type $j$ in $P^{\ca M}$. Let $\ca M'$ be a multiset of perfect matchings of $P^{\ca M}$.
We denote by $\ca M'_P $ the multiset of perfect matchings of $\ca M'$ interpreted as perfect matchings of $P$. That is, for each $j\in\{0,\dots,5\}$, $\ca M'_P$ contains exactly $k$ copies of $M_j$ if and only if $\ca M'$ contains exactly $k$ perfect matchings of type $j$.

\begin{lem}[\cite{Mattiolo2022HighlyER}]\label{lem:P+matchings}
Let $\ca M$ be a multiset of $k$ perfect matchings of $P$. If $\ca M' = \{M_1',\dots,M_{k+1}'\}$ is a $(k+1)$-PDPM of $P^{\ca M}$, then $ \ca M \subseteq \ca M'_{P}$.
	\end{lem}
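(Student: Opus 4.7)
The plan is to recast the conclusion as a system of linear inequalities in six integer variables and then show that every admissible solution is non-negative. For each $j\in\{0,\dots,5\}$, let $a_j$ be the multiplicity of $M_j$ in $\ca M$ and $b_j$ the number of matchings of $\ca M'$ of type $j$, so that $\sum_{j} a_j=k$ and $\sum_{j} b_j=k+1$. The conclusion $\ca M\subseteq \ca M'_P$ is exactly the statement $a_j\le b_j$ for every $j$; equivalently, $c_j:=b_j-a_j\ge 0$, subject to $\sum_{j} c_j=1$.

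I would extract the inequalities from the pairwise disjointness of $\ca M'$. For each edge $e\in E(P)$, the multiplicity of $e$ in $P^{\ca M}$ is $1+\sum_{j:\,e\in M_j}a_j$, whereas a matching of type $t$ uses exactly one copy of each edge of $M_t$, so $\sum_{t:\,e\in M_t}b_t$ copies of $e$ are consumed in total. Pairwise disjointness therefore forces
\[
\sum_{t:\,e\in M_t}c_t\;\le\;1 \qquad\text{for every } e\in E(P).
\]
The crucial combinatorial input is the classical fact that the six perfect matchings of $P$ are pairwise $1$-intersecting: any two share exactly one edge, and every edge lies in exactly two of them. One clean justification is that $M_i\cap M_j\ne \emptyset$ for $i\ne j$ (otherwise $M_i\triangle M_j$ would be a Hamilton cycle, contradicting the non-Hamiltonicity of $P$), combined with the identity $\sum_{\{i,j\}}|M_i\cap M_j|=15=\binom{6}{2}$. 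Consequently, the edge inequalities collapse to $c_i+c_j\le 1$ for every pair $i\ne j$ in $\{0,\dots,5\}$.

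To finish, I would argue by contradiction. Assume $c_\ell\le -1$ for some $\ell$; then $\sum_{j\ne\ell}c_j\ge 2$. Letting $m=\max_{j\ne\ell}c_j$, the pairwise constraint $m+c_j\le 1$ yields $c_j\le 1-m$ for the remaining four indices, so
\[
2\;\le\;\sum_{j\ne\ell}c_j\;\le\; m+4(1-m)\;=\;4-3m,
\]
which forces $m\le 0$ (integrality). But then every $c_j$ with $j\ne\ell$ is non-positive, contradicting $\sum_{j\ne\ell}c_j\ge 2$. Hence $c_j\ge 0$ for all $j$, proving $\ca M\subseteq \ca M'_P$. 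The main (mild) obstacle is presenting the two structural facts about the matchings of $P$ cleanly; once those are in hand, the rest is a very short linear-inequality argument.
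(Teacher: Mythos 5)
Your proof is correct. Note that the paper does not prove this lemma itself but imports it from \cite{Mattiolo2022HighlyER}, so there is no in-paper argument to compare against; your linear-inequality formulation is a valid self-contained substitute. The inputs you need are exactly the standard facts about the six perfect matchings of the Petersen graph --- every edge lies in precisely two of them, and any two of them meet in precisely one edge --- together with the implicitly used fact that $P$ has no perfect matchings other than $M_0,\dots,M_5$, which is what makes $\sum_j b_j=k+1$ legitimate; the paper's setup in Section~2 already takes all of this as known. One small presentational remark: as written, your justification is slightly circular, since the identity $\sum_{\{i,j\}}|M_i\cap M_j|=15$ is itself normally derived from the fact that every edge lies in exactly two perfect matchings (e.g.\ from $6\cdot 5=2\cdot 15$ together with edge-transitivity, or from the explicit description of the $M_j$ in which each spoke $u_iv_i$ lies only in $M_0$ and $M_i$); it is cleaner to establish that fact first and then deduce $|M_i\cap M_j|=1$ from non-Hamiltonicity. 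The reduction of the disjointness condition to $c_i+c_j\le 1$ for all pairs, and the concluding maximum argument ($2\le 4-3m$, hence $m\le 0$ by integrality, hence $\sum_{j\ne\ell}c_j\le 0$, a contradiction) are both verified and correct.
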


We 
also need the following lemma, whose proof is basically the same as that of Lemma 2.4 in \cite{Mattiolo2022HighlyER} but its statement is more general.
To keep the paper self-contained, the proof is presented.

\begin{lem}\label{lem:P+matchings_Connectivity}
Let $\ca M$ be a multiset of $k$ perfect matchings of $P$. If for every $u,v\in V(P^{\ca M})$,  $\mu_{P^{\ca M}}(u,v) \le \lfloor \frac{k+3}{2} \rfloor $, then $P^{\ca M}$ is a $(k+3)$-edge-connected $(k+3)$-regular class $2$ graph.
\end{lem}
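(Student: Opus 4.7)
I would break the verification into three parts: $(k+3)$-regularity, $(k+3)$-edge-connectivity, and class~$2$. Regularity is immediate: adding a perfect matching contributes one edge at every vertex, so each of the $k$ added matchings raises every vertex degree by $1$, producing a $(k+3)$-regular graph.

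For edge-connectivity, I would fix a nonempty proper $U\subsetneq V(P^{\ca M})$, assume $|U|\leq 5$ by complementing if needed, and handle three cases. If $|U|$ is odd, Observation~\ref{odd-insect} applied to each $N\in\ca M$ (as a perfect matching of $P$) gives $|\partial_P(U)\cap N|\geq 1$, which together with $|\partial_P(U)|\geq 3$ yields $|\partial_{P^{\ca M}}(U)|\geq 3+k=k+3$. If $|U|=2$, say $U=\{u,v\}$, the multiplicity hypothesis gives
\[
|\partial_{P^{\ca M}}(U)|=2(k+3)-2\mu_{P^{\ca M}}(u,v)\geq 2(k+3)-2\lfloor(k+3)/2\rfloor\geq k+3.
\]
The crucial case is $|U|=4$. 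Here I would use two observations: (i) because every matching in $\ca M$ is a subset of $E(P)$, parallel edges in $P^{\ca M}$ only occur between vertices that are already adjacent in $P$; and (ii) since $P$ has girth $5$, the induced subgraph $P[U]$ is a forest on four vertices, hence has at most $3$ edges. Combining these with the multiplicity bound gives $|E(P^{\ca M}[U])|\leq 3\lfloor(k+3)/2\rfloor$, so $|\partial_{P^{\ca M}}(U)|=4(k+3)-2|E(P^{\ca M}[U])|\geq 4(k+3)-3(k+3)=k+3$.

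For class~$2$, I would assume towards contradiction that $P^{\ca M}$ admits a $(k+3)$-PDPM $\ca N$. Each $N\in\ca N$, projected back onto $P$ edge by edge, yields a perfect matching of $P$, so it has a well-defined type $t(N)\in\{0,\dots,5\}$. Let $m_j=|\{N\in\ca N:t(N)=j\}|$ and let $n_j$ be the multiplicity of $M_j$ in $\ca M$, so $\sum m_j=k+3$ and $\sum n_j=k$. Using the standard fact that any two of the six perfect matchings of $P$ share exactly one edge (a consequence of $P$ being of girth $5$ and Hamilton-cycle-free, so that two disjoint matchings cannot tile the $10$ vertices by even cycles), let $e_{jj'}$ be the unique shared edge of $M_j$ and $M_{j'}$. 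Then its multiplicity in $P^{\ca M}$ is $1+n_j+n_{j'}$, while it is used by exactly $m_j+m_{j'}$ members of $\ca N$; pairwise disjointness forces
\[
m_j+m_{j'}\leq 1+n_j+n_{j'}\quad\text{for every pair }\{j,j'\}.
\]
Summing over all $\binom{6}{2}=15$ pairs gives $5(k+3)\leq 15+5k$, which is already an equality. Hence every one of the $15$ inequalities must be tight, forcing $m_j-n_j$ to be independent of $j$; summing this constant over $j\in\{0,\dots,5\}$ then gives $6(m_j-n_j)=3$, a contradiction since $m_j-n_j$ must be an integer.

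The main obstacle, I expect, is the $|U|=4$ subcase of the edge-connectivity argument. A direct analysis of $|\partial_P(U)\cap M_i|$ matching by matching splits into awkward subcases depending on whether $P[U]$ is a path or a star, and it is only by reframing the bound in terms of counting parallel edges inside $U$, and applying both the girth of $P$ and the multiplicity hypothesis simultaneously, that the cases consolidate into the clean one-line estimate above.
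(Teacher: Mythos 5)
Your proof is correct, and it diverges from the paper's in one substantive way: the class~$2$ claim. The paper simply cites Theorem~3.1 of \cite{Grunewald_Steffen_1999} for that part, whereas you prove it from scratch by typing the matchings of a hypothetical $(k+3)$-PDPM, comparing $m_j+m_{j'}$ against the multiplicity $1+n_j+n_{j'}$ of the unique edge shared by $M_j$ and $M_{j'}$, and deriving the parity contradiction $m_j-n_j=\tfrac12$ from the forced tightness of all fifteen inequalities. This is essentially the standard proof of the cited result (and is the same counting that underlies Lemma~\ref{lem:P+matchings}), so it buys self-containedness at the cost of invoking the classical facts that $P$ has exactly six perfect matchings, each edge lying in exactly two of them --- facts you only gesture at, though they are routine. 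For edge-connectivity your argument is the same case split as the paper's (odd sets via Observation~\ref{odd-insect} and $3$-edge-connectivity of $P$; even sets of size $2$ and $4$ via girth plus the multiplicity bound); the only difference is that for $|U|=4$ you bound the number of \emph{internal} edges by $3\lfloor\tfrac{k+3}{2}\rfloor$ and use the degree sum, while the paper bounds the \emph{boundary} edges at each degree-one vertex of $P[U]$ from below by $\lceil\tfrac{k+3}{2}\rceil$; these are dual counts of the same quantity, and yours has the small advantage of not distinguishing the path from the $K_{1,3}$ configuration.
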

	
\begin{proof}
By construction, $P^{\ca M}$ is $(k+3)$-regular. Moreover, it is class $2$, see Theorem 3.1 of \cite{Grunewald_Steffen_1999}.
Let $X\subset  V(P^{\ca M})$ be the subset with $|\partial_{P^{\ca M}} (X)|$ minimum. This implies that 
$P^{\ca M}[X]$ is connected. If $|X|$ is odd, then  by Observation \ref{odd-insect}, every perfect matching of $\ca M$ intersects $\partial_{P^{\ca M}} (X)$. Hence, $|\partial_{P^{\ca M}} (X)|\ge 3 + k$. If $|X|$ is even, then it suffices to consider the cases $|X|\in\{2,4\}$. Since $P$ has girth $5$, the subgraph $P[X]$ is a path on either $2$ or $4$ vertices or
it is isomorphic to $K_{1,3}$. Since $\mu_{P^{\ca M}}(u,v) \le \lfloor \frac{k+3}{2} \rfloor $ for every $u,v\in V(P^{\ca M})$ it follows that $\partial_{P^{\ca M}}(X)$ contains at least $ \lceil \frac{k+3}{2} \rceil$ edges for each vertex of degree 1
in $P[X]$. Consequently, $|\partial_{P^{\ca M}}(X)|\ge k+3 $.
	\end{proof}

Meredith \cite{Meredith} constructed $r$-edge-connected class $2$  $r$-graphs for every $r\ge 8$. One can observe that 
$P+M_0+M_1+M_2$ and $P+M_0+M_1+M_2+M_3$ are respectively a  $6$-edge-connected $6$-regular graph and a $7$-edge-connected $7$-regular graph. Such graphs are class $2$ by  \cite{Grunewald_Steffen_1999} (Theorem 3.1). However, we cannot easily construct a $5$-edge-connected $5$-regular graph in the same way. Indeed, adding 
two perfect matchings to $P$ generates a $4$-edge-cut. It is also 
well known that for each $r \in \{3,4\}$ there are $r$-edge-connected class $2$ $r$-graphs.
Surprisingly, it seems that so far no $5$-edge-connected $5$-regular class $2$ graph is known, see Problem \ref{prob:5_reg_class_2}.

\section{Proof of Theorem \ref{4k+1con}}\label{Sec: m(4t+2)}

In this section we construct a $(4k+2)$-edge-connected $(4k+2)$-graph $G_k$ without a $4k$-PDPM for each integer $k \geq 1$. As in \cite{Mattiolo2022HighlyER}, we first construct a graph $P_k$ by adding perfect matchings to the Petersen graph and a graph $Q_k$ by using two copies of $P_k$. Then, we construct a graph $S_k$ and "replace" some edges of $S_k$ by copies of $Q_k$ to obtain the graph $G_k$ with the desired properties.

\subsection{The graphs $P_k$ and $Q_k$}

For each $k \geq 1$, let $$P_k=P+k(M_0+M_1+M_2)+(k-1)M_5,$$ as shown in Figure \ref{Fig:P_2}.

\begin{figure}[htbp]
	\centering
	\includegraphics[scale=0.6]{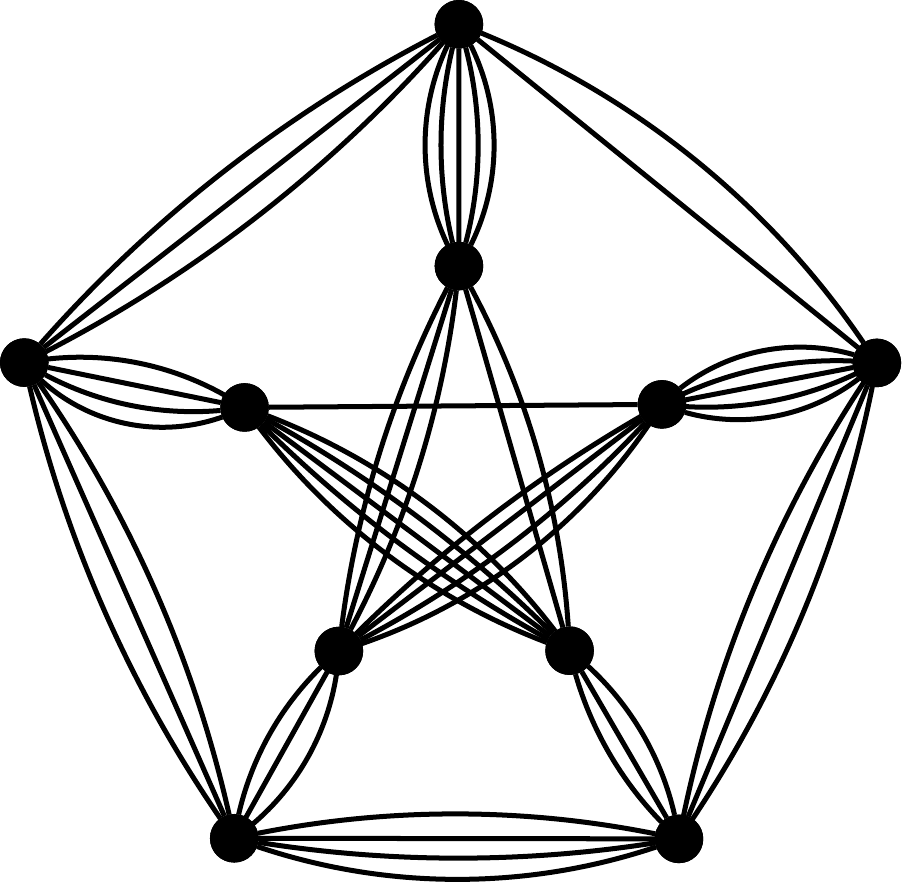}
	\caption{The graph $P_2$.}\label{Fig:P_2}
\end{figure}

Let $P_k^1$ and $P_k^2$ be two distinct copies of $P_k$. For each $w\in V(P_k)$, the vertex of $P_k^1$ ($P_k^2$, respectively) that corresponds to $w$ is denoted by $w^1$  ($w^2$, respectively). Now, we obtain the graph $Q_k$ from $P_k^1$ and $P_k^2$ by removing the $2k+1$ parallel edges connecting $u_1^i$ and $v_1^i$ from $P_k^i$, for each $i \in \{1,2\}$, and identifying $u_1^1$ and $u_1^2$ to a new vertex, denoted by $u_{Q_k}$. Note that the degree of $u_{Q_k}$ in $Q_k$ is $4k+2$. For a graph $G$ containing $Q_k$ as a subgraph, let $E_k^i=[v_1^i, V(G) \setminus V(Q_k)]_G$ for each $i\in \{1,2\}$. 
The subgraph $Q_2$
and the edge-sets $E^1_2$ and $E^2_2$ are shown in Figure~\ref{Fig:Q_2}. 

\begin{figure}[htbp]
\centering
\begingroup%
  \makeatletter%
  \providecommand\color[2][]{%
    \errmessage{(Inkscape) Color is used for the text in Inkscape, but the package 'color.sty' is not loaded}%
    \renewcommand\color[2][]{}%
  }%
  \providecommand\transparent[1]{%
    \errmessage{(Inkscape) Transparency is used (non-zero) for the text in Inkscape, but the package 'transparent.sty' is not loaded}%
    \renewcommand\transparent[1]{}%
  }%
  \providecommand\rotatebox[2]{#2}%
  \newcommand*\fsize{\dimexpr\f@size pt\relax}%
  \newcommand*\lineheight[1]{\fontsize{\fsize}{#1\fsize}\selectfont}%
  \ifx\svgwidth\undefined%
    \setlength{\unitlength}{377.37342172bp}%
    \ifx\svgscale\undefined%
      \relax%
    \else%
      \setlength{\unitlength}{\unitlength * \real{\svgscale}}%
    \fi%
  \else%
    \setlength{\unitlength}{\svgwidth}%
  \fi%
  \global\let\svgwidth\undefined%
  \global\let\svgscale\undefined%
  \makeatother%
  \begin{picture}(1,0.36383494)%
    \lineheight{1}%
    \setlength\tabcolsep{0pt}%
    \put(0,0){\includegraphics[width=\unitlength,page=1]{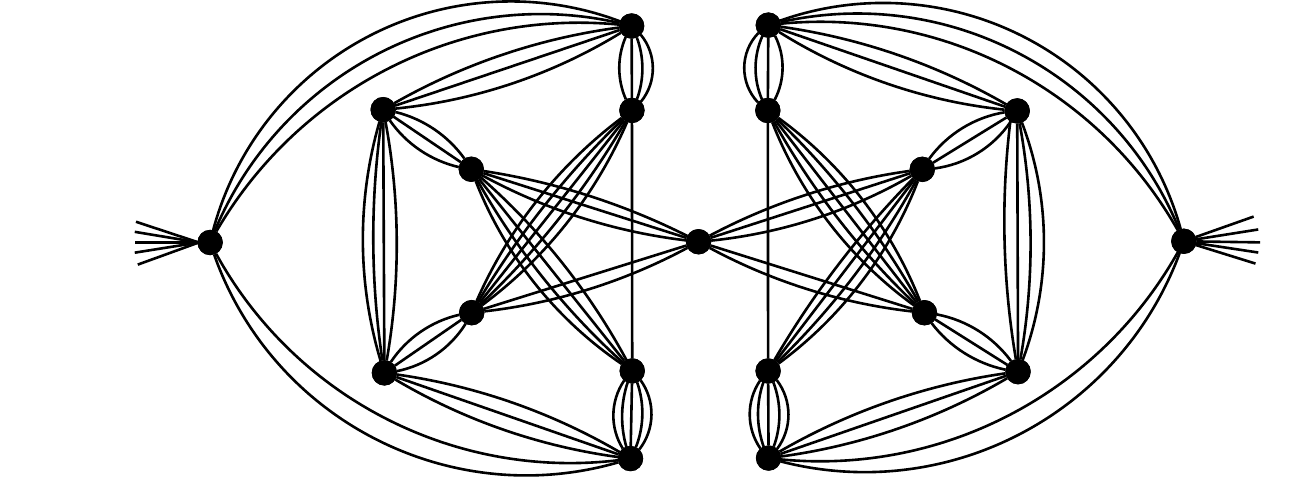}}%
    \put(0.0972923,0.16730601){\color[rgb]{0,0,0}\makebox(0,0)[rt]{\lineheight{0.31250054}\smash{\begin{tabular}[t]{r}$E^1_2$\end{tabular}}}}%
    \put(1.00182235,0.1688183){\color[rgb]{0,0,0}\makebox(0,0)[rt]{\lineheight{0.31250054}\smash{\begin{tabular}[t]{r}$E^2_2$\end{tabular}}}}%
    \put(0.55906724,0.20525762){\color[rgb]{0,0,0}\makebox(0,0)[rt]{\lineheight{0.31250054}\smash{\begin{tabular}[t]{r}$u_{Q_2}$\end{tabular}}}}%
  \end{picture}%
\endgroup%

\caption{The subgraph $Q_2$ and the edge-sets $E^1_2$ and $E^2_2$.}
\label{Fig:Q_2}
\end{figure}

The following lemma is similar to Lemma 2.5 in \cite{Mattiolo2022HighlyER} ($Q_k$ is different), and it can be proved analogously. In order to keep the paper self-contained, we present the proof here.
	
	\begin{lem}\label{lem:Qk}
Let $G$ be a graph that contains $Q_k$ as an induced subgraph. Let $\ca N =\{N_1,\dots, N_{4k}\}$ be a set of pairwise disjoint perfect matchings of $G$ and let $N= \bigcup_{i=1}^{4k} N_i$. If $\partial(V(Q_k)) = E_k^{1} \cup E_k^{2}$, then $$\vert E_k^{1} \cap N \vert= \vert E_k^{2} \cap N \vert =2k.$$
	\end{lem}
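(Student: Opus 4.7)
The plan is to analyze how each $N_j$ behaves inside each copy $P_k^i$ via a parity argument, and then apply Lemma \ref{lem:P+matchings}. Set $X_i := V(P_k^i) \setminus \{u_{Q_k}\}$, an odd set of size $9$, so that $V(Q_k) = X_1 \sqcup \{u_{Q_k}\} \sqcup X_2$. Since $Q_k$ contains no edges between $X_1$ and $X_2$ and no edge $u_{Q_k} v_1^i$, the cut decomposes as $\partial_G(X_i) = [X_i, u_{Q_k}]_{Q_k} \cup E_k^i$, where the first part is incident only to $u_{Q_k}$ and the second only to $v_1^i$. Applying Observation \ref{odd-insect} to $X_i$, the number $|N_j \cap \partial_G(X_i)|$ is odd; since each of $|N_j \cap [X_i,u_{Q_k}]|$ and $|N_j \cap E_k^i|$ lies in $\{0,1\}$, for each $N_j$ exactly one of the two events ``$u_{Q_k}$ is matched into $X_i$'' and ``$v_1^i$ is matched externally'' occurs.

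Consequently, $N_j \cap E(P_k^i)$ is either a perfect matching of $P_k^i$ (case (a), when $u_{Q_k}$ is matched into $X_i$) or a perfect matching of $P_k^i - \{u_1^i, v_1^i\}$ (case (d), when $u_{Q_k}$ is matched into $X_{3-i}$, in which case $v_1^i$ is the externally matched vertex). Letting $r_i$ denote the number of $N_j$ matching $u_{Q_k}$ into $X_i$, I obtain $r_1 + r_2 = 4k$, $|N \cap E_k^1| = r_2$, and $|N \cap E_k^2| = r_1$, so it suffices to prove $r_i \ge 2k$ for both $i \in \{1,2\}$.

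The $r_i$ case-(a) perfect matchings of $P_k^i$ are pairwise edge-disjoint and contain no copy of $u_1^i v_1^i$, since those $2k+1$ parallel edges were deleted to form $Q_k$. Because $u_1^i$ has exactly $2k+1$ edges in $P_k^i$ to vertices other than $v_1^i$, this already forces $r_i \le 2k+1$, and symmetrically $r_{3-i} \le 2k+1$. I then extend each of the $r_{3-i}$ case-(d) perfect matchings of $P_k^i - \{u_1^i, v_1^i\}$ by adding a distinct copy of $u_1^i v_1^i$ taken from $P_k^i$; together with the $r_i$ unaltered case-(a) matchings, this produces a $4k$-PDPM of $P_k^i$. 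Since $P_k^i = P + k(M_0 + M_1 + M_2) + (k-1)M_5$, Lemma \ref{lem:P+matchings} forces this PDPM to contain at least $k$ matchings of each of the types $M_0, M_1, M_2$. Because $M_0$ and $M_1$ are the only perfect matchings of $P$ containing the edge $u_1 v_1$, at least $2k$ matchings of the PDPM must use a copy of $u_1^i v_1^i$; but by construction precisely the $r_{3-i}$ extended matchings do so, giving $r_{3-i} \ge 2k$. Applying this for both $i$ together with $r_1 + r_2 = 4k$ yields $r_1 = r_2 = 2k$, as required. The main subtlety is the parity step that cleanly classifies each $N_j \cap E(P_k^i)$ into one of the two cases; once this classification is in place, the extension of case-(d) restrictions and the invocation of Lemma \ref{lem:P+matchings} proceed essentially mechanically.
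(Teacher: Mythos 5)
Your proof is correct and follows essentially the same route as the paper's: classify each $N_j$ by the parity argument on the odd sets $V(P_k^i)\setminus\{u_{Q_k}\}$, restrict to one copy of $P_k$, extend the deficient restrictions by distinct parallel copies of $u_1^iv_1^i$ to obtain a $4k$-PDPM of $P_k^i$, and invoke Lemma \ref{lem:P+matchings} together with the fact that only $M_0$ and $M_1$ contain $u_1v_1$. The only difference is presentational: the paper argues by contradiction from $|E_k^1\cap N|<2k$ and leaves the extension step implicit, whereas you run the count directly and verify the needed bound $r_{3-i}\le 2k+1$ explicitly.
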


	\begin{proof}
Every perfect matching of $G$ intersects $\partial(V(Q_k))$ precisely once since $|V(Q_k)|$ is odd and $\{v_1^1, v_1^2\}$ is a $2$-vertex-cut. It remains to show that $E_k^i$ intersects precisely $2k$ elements of $\ca N$. Recall that $Q_k$ is constructed by using two copies of $ P + \sum_{M\in \ca M} M$, where
$$\ca  M = \{\underbrace{M_0,M_1,M_2,\dots,M_0,M_1,M_2}_{ k\text{ times}}, \underbrace{M_5,\dots,M_5}_{ k-1\text{ times}}\}.$$
We argue by contradiction. Without loss of generality, suppose that $\vert E_k^{1} \cap N \vert < 2k$, which is equivalent to $\vert E_k^{2} \cap N \vert > 2k$. 
Every perfect matching of $\ca N$ that intersects $E_k^1$ also intersects 
the set $[u_{Q_k}, V(P_k^2)]_G$, and vice versa. 
Consequently, the existence of $\ca N$ implies that there is a set $\ca N'$ of $4k$ pairwise disjoint perfect matchings in $P_k^{1}$ such that $\ca M \nsubseteq \ca N'_P$, a contradiction to Lemma \ref{lem:P+matchings}. Hence, $\vert E_k^{1} \cap N \vert= \vert E_k^{2} \cap N \vert =2k$.
	\end{proof}

\subsection{The graph $S_k$}

For every $k \geq 1$, let $S_k$ be the graph with vertex set $\{x_i,y_i,z_i,w : i \in \{1,\ldots,4k+2\}\}$ and edge set $A_k \cup k B_k \cup (k+1)C_k \cup (2k+1)(D_k \cup E_k)$ where
\begin{align*}
A_k &=\{wz_i : i \in \{1,\ldots,4k+2\}\},\\
B_k &=\{z_i x_i, z_i y_i : i \in \{1,\ldots,4k+2\}\},\\
C_k &=\{x_i y_i : i \in \{1,\ldots,4k+2\}\},\\
D_k &=\{y_i x_{i+1} : i \in \{1,\ldots,4k+2\}\},\\
E_k &=\{z_i z_{i+2k+1} : i \in \{1,\ldots,2k+1\}\},
\end{align*}
and the indices are added modulo $4k+2$, see Figure~\ref{Fig:S_1andS_2}.

\begin{figure}[htbp]
\centering
\begingroup%
  \makeatletter%
  \providecommand\color[2][]{%
    \errmessage{(Inkscape) Color is used for the text in Inkscape, but the package 'color.sty' is not loaded}%
    \renewcommand\color[2][]{}%
  }%
  \providecommand\transparent[1]{%
    \errmessage{(Inkscape) Transparency is used (non-zero) for the text in Inkscape, but the package 'transparent.sty' is not loaded}%
    \renewcommand\transparent[1]{}%
  }%
  \providecommand\rotatebox[2]{#2}%
  \newcommand*\fsize{\dimexpr\f@size pt\relax}%
  \newcommand*\lineheight[1]{\fontsize{\fsize}{#1\fsize}\selectfont}%
  \ifx\svgwidth\undefined%
    \setlength{\unitlength}{449.6716321bp}%
    \ifx\svgscale\undefined%
      \relax%
    \else%
      \setlength{\unitlength}{\unitlength * \real{\svgscale}}%
    \fi%
  \else%
    \setlength{\unitlength}{\svgwidth}%
  \fi%
  \global\let\svgwidth\undefined%
  \global\let\svgscale\undefined%
  \makeatother%
  \begin{picture}(1,0.50763425)%
    \lineheight{1}%
    \setlength\tabcolsep{0pt}%
    \put(0,0){\includegraphics[width=\unitlength,page=1]{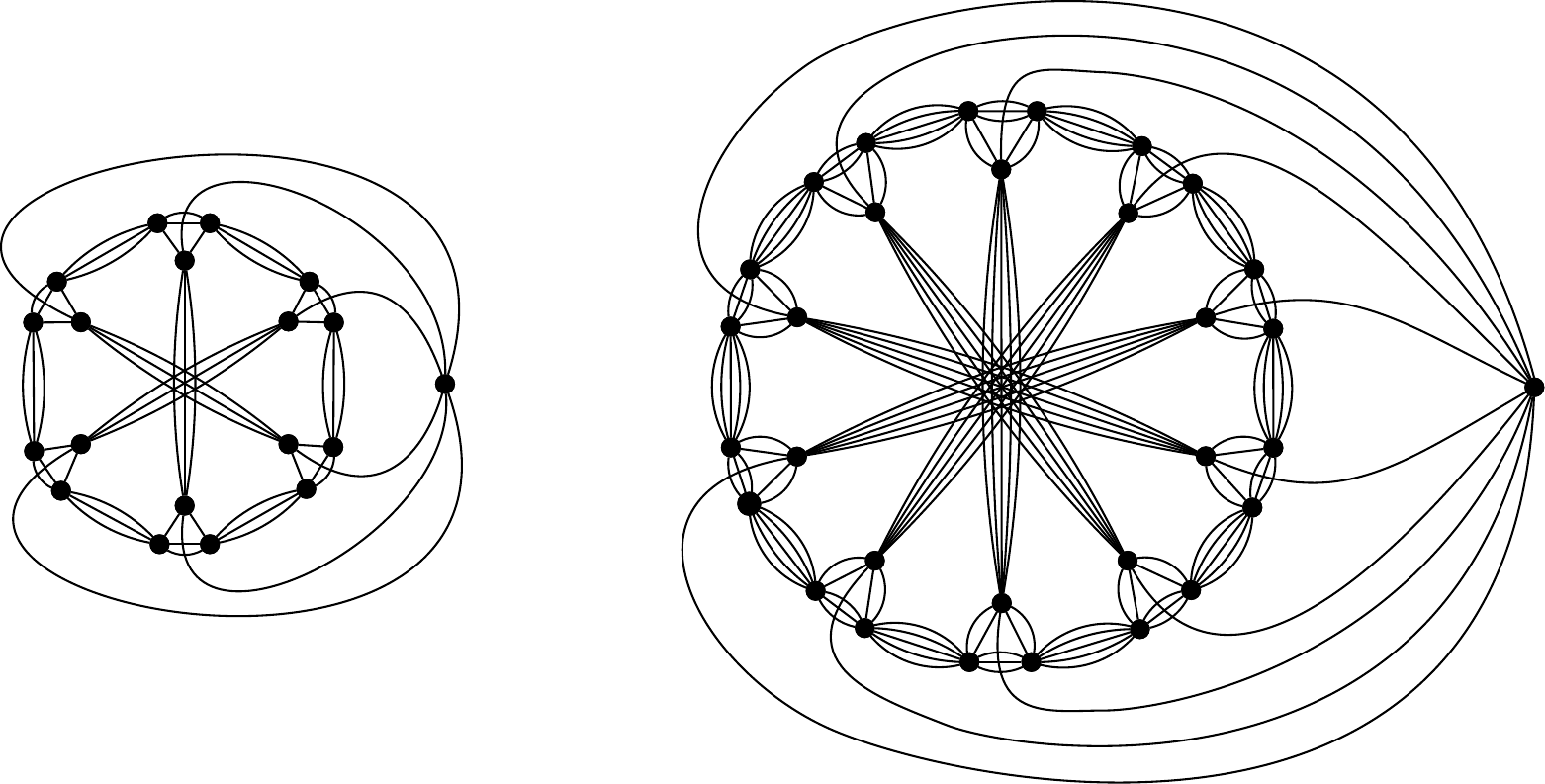}}%
    \put(0.11190181,0.37856123){\color[rgb]{0,0,0}\makebox(0,0)[rt]{\lineheight{0.3125006}\smash{\begin{tabular}[t]{r}$x_1$\end{tabular}}}}%
    \put(0.22474243,0.33502578){\color[rgb]{0,0,0}\makebox(0,0)[rt]{\lineheight{0.3125006}\smash{\begin{tabular}[t]{r}$x_2$\end{tabular}}}}%
    \put(0.24912034,0.29511828){\color[rgb]{0,0,0}\makebox(0,0)[rt]{\lineheight{0.3125006}\smash{\begin{tabular}[t]{r}$y_2$\end{tabular}}}}%
    \put(0.20268256,0.27872191){\color[rgb]{0,0,0}\makebox(0,0)[rt]{\lineheight{0.3125006}\smash{\begin{tabular}[t]{r}$z_2$\end{tabular}}}}%
    \put(0.16799534,0.37087192){\color[rgb]{0,0,0}\makebox(0,0)[rt]{\lineheight{0.3125006}\smash{\begin{tabular}[t]{r}$y_1$\end{tabular}}}}%
    \put(0.15232449,0.33038004){\color[rgb]{0,0,0}\makebox(0,0)[rt]{\lineheight{0.3125006}\smash{\begin{tabular}[t]{r}$z_1$\end{tabular}}}}%
    \put(0.31822508,0.25405179){\color[rgb]{0,0,0}\makebox(0,0)[rt]{\lineheight{0.3125006}\smash{\begin{tabular}[t]{r}$w$\end{tabular}}}}%
  \end{picture}%
\endgroup%

\caption{The graphs $S_1$ (left) and $S_2$ (right).}
\label{Fig:S_1andS_2}
\end{figure}

\begin{lem}\label{lem:S_k_connectivity}
For all $k \geq 1$, $S_k$ is $(4k+2)$-edge-connected and $(4k+2)$-regular.
\end{lem}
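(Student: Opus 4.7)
The plan splits into checking $(4k+2)$-regularity and $(4k+2)$-edge-connectivity. Regularity is immediate from the edge-set definitions: $w$ has degree $|A_k|=4k+2$; each $z_i$ receives $1+2k+(2k+1)$ edges from $A_k$, $kB_k$, and $(2k+1)E_k$; and each $x_i$ (symmetrically each $y_i$) receives $k+(k+1)+(2k+1)=4k+2$ edges from $kB_k$, $(k+1)C_k$, and $(2k+1)D_k$.

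For edge-connectivity I would pick an arbitrary non-empty proper $S\subsetneq V(S_k)$ and, replacing $S$ by its complement if needed, assume $w\notin S$. Describe $S$ by the three index sets $I=\{i:z_i\in S\}$, $J=\{i:x_i\in S\}$, $K=\{i:y_i\in S\}$ and by the two ``cross'' counts
\[
D^{*}=\bigl|\{i:[y_i\in S]\neq[x_{i+1}\in S]\}\bigr|,\qquad E^{*}=\bigl|\{i\in[1,2k+1]:[z_i\in S]\neq[z_{i+2k+1}\in S]\}\bigr|.
\]
Edge-type by edge-type bookkeeping yields
\[
|\partial(S)|=|I|+k|I\triangle J|+k|I\triangle K|+(k+1)|J\triangle K|+(2k+1)(D^{*}+E^{*}),
\]
and the elementary identity $|I\triangle J|+|I\triangle K|=2\epsilon+|J\triangle K|$, with $\epsilon=|\{i:[i\in J]=[i\in K]\neq[i\in I]\}|$, rewrites this as
\[
|\partial(S)|=|I|+2k\epsilon+(2k+1)\bigl(|J\triangle K|+D^{*}+E^{*}\bigr).
\]

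The crux is a parity observation on the cyclic alternating outer sequence $x_1,y_1,x_2,y_2,\dots,x_{4k+2},y_{4k+2}$: the number of consecutive pairs on opposite sides of $S$ equals $|J\triangle K|+D^{*}$, and must therefore be even. Hence either $|J\triangle K|+D^{*}+E^{*}\ge 2$, in which case the last term alone forces $|\partial(S)|\ge 2(2k+1)=4k+2$, or the sum is at most $1$ and the parity constraint forces $|J\triangle K|=D^{*}=0$ with $E^{*}\in\{0,1\}$. Combined with $D^{*}=0$, the equality $J=K$ propagates side-membership around the outer cycle, so $J=K$ is either $\emptyset$ or $\{1,\dots,4k+2\}$.

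Each of these two degenerate configurations is dispatched by a short calculation. If $J=K=\emptyset$, then $S\subseteq\{z_1,\dots,z_{4k+2}\}$ with $|I|\ge 1$ and $\epsilon=|I|$, so $|\partial(S)|=(2k+1)(|I|+E^{*})$; since $E^{*}=0$ forces $|I|$ to be even (hence $|I|\ge 2$) while $E^{*}=1$ makes $|I|+E^{*}\ge 2$ automatic, the bound $\ge 4k+2$ follows. If $J=K=\{1,\dots,4k+2\}$, then $\epsilon=(4k+2)-|I|$ and a rearrangement produces $|\partial(S)|=(4k+2)+(2k-1)((4k+2)-|I|)+(2k+1)E^{*}\ge 4k+2$, using $|I|\le 4k+2$ and $k\ge 1$. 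The one step I expect to require any real care is the parity-based reduction that ties the outer and $z$-parts of $S$ together; once it is in place, the remaining estimates are routine.
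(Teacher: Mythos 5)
Your proof is correct, and it takes a genuinely different route from the paper's. The paper argues qualitatively: if a cut separates two outer vertices $u,v\in\{x_i,y_i\}$, it exhibits $4k+2$ pairwise edge-disjoint $u$--$v$ paths inside $kB_k\cup(k+1)C_k\cup(2k+1)D_k$ (so the cut has at least $4k+2$ edges), and otherwise reduces to a short case analysis on how many of the vertices $z_i$ and $w$ lie in $X$, using the multiplicity bound $\mu_{S_k}(u,v)\le 2k+1$ to rule out $|X|\in\{1,2\}$. You instead compute $|\partial(S)|$ exactly as a function of the index sets $I,J,K$ and the crossing counts $D^{*},E^{*}$, and close the argument with the parity of sign changes around the outer cycle (forcing $|J\triangle K|+D^{*}$ even) plus the pairing structure of $E_k$ (forcing $|I|$ even when $E^{*}=0$). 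I checked your identity $|I\triangle J|+|I\triangle K|=2\epsilon+|J\triangle K|$ pointwise over the eight membership patterns, the resulting formula $|\partial(S)|=|I|+2k\epsilon+(2k+1)(|J\triangle K|+D^{*}+E^{*})$, and both degenerate cases ($J=K=\emptyset$ gives $(2k+1)(|I|+E^{*})\ge 4k+2$; $J=K=\{1,\dots,4k+2\}$ gives $2k(4k+2)-(2k-1)|I|+(2k+1)E^{*}\ge 4k+2$): all are sound. The trade-off is that the paper's proof is shorter but leans on an unproved ``clearly there are $4k+2$ edge-disjoint paths'' claim, whereas yours is longer but entirely mechanical and verifiable line by line, and in fact yields the exact cut size rather than just a lower bound.
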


\begin{proof}
By definition, $S_k$ is $(4k+2)$-regular. Let $X \subset V(G)$ be a non-empty set. First, we consider the case that there are two vertices $u,v \in \{x_i, y_i : i \in \{1, \ldots, 4k+2\}\}$ such that $X$ contains exactly one of them. Clearly, there are $4k+2$ pairwise edge-disjoint $uv$-paths in $S_k$, which only contain edges of $kB_k \cup (k+1)C_k \cup 
(2k+1)D_k$. Hence, $\vert \partial_{S_k}(X) \vert \geq 4k+2$. Therefore, without loss of generality we may assume $\{x_i, y_i : i \in \{1, \ldots, 4k+2\}\} \cap X = \emptyset$. Since $S_k$ is $(4k+2)$-regular and $\mu_{S_k}(u,v) \leq 2k+1$ for every $u,v \in V(S_k)$, we have $\vert X \vert \notin \{1, 2\}$. Hence, $X$ either contains at least three vertices of $\{z_i : i \in \{1, \ldots, 4k+2\}\}$ or $w$ and exactly two vertices of $\{z_i : i \in \{1, \ldots, 4k+2\}\}$. In the first case, $\partial(X)$ contains at least $6k$ edges of 
$kB_k$. In the second case, $\partial(X)$ contains $4k$ edges of $A_k$ and at least $4k$ edges of $kB_k$, which completes the proof.
\end{proof}

\subsection{The graph $G_k$}

For every $k \geq 1$, let $G_k$ be the graph obtained from $S_k$ as follows. First, remove all edges of $(2k+1)(D_k \cup E_k)$. Then, for every edge $e=uv \in D_k \cup E_k$, add a copy $Q_k^e$ of $Q_k$, connect $u$ with the vertex corresponding to $v_1^1$ by $2k+1$ new parallel edges and connect $v$ with the vertex corresponding to $v_2^1$ by $2k+1$ new parallel edges, see Figure~\ref{Fig:G_1}.

\begin{figure}
	\centering
	\includegraphics[scale=0.35]{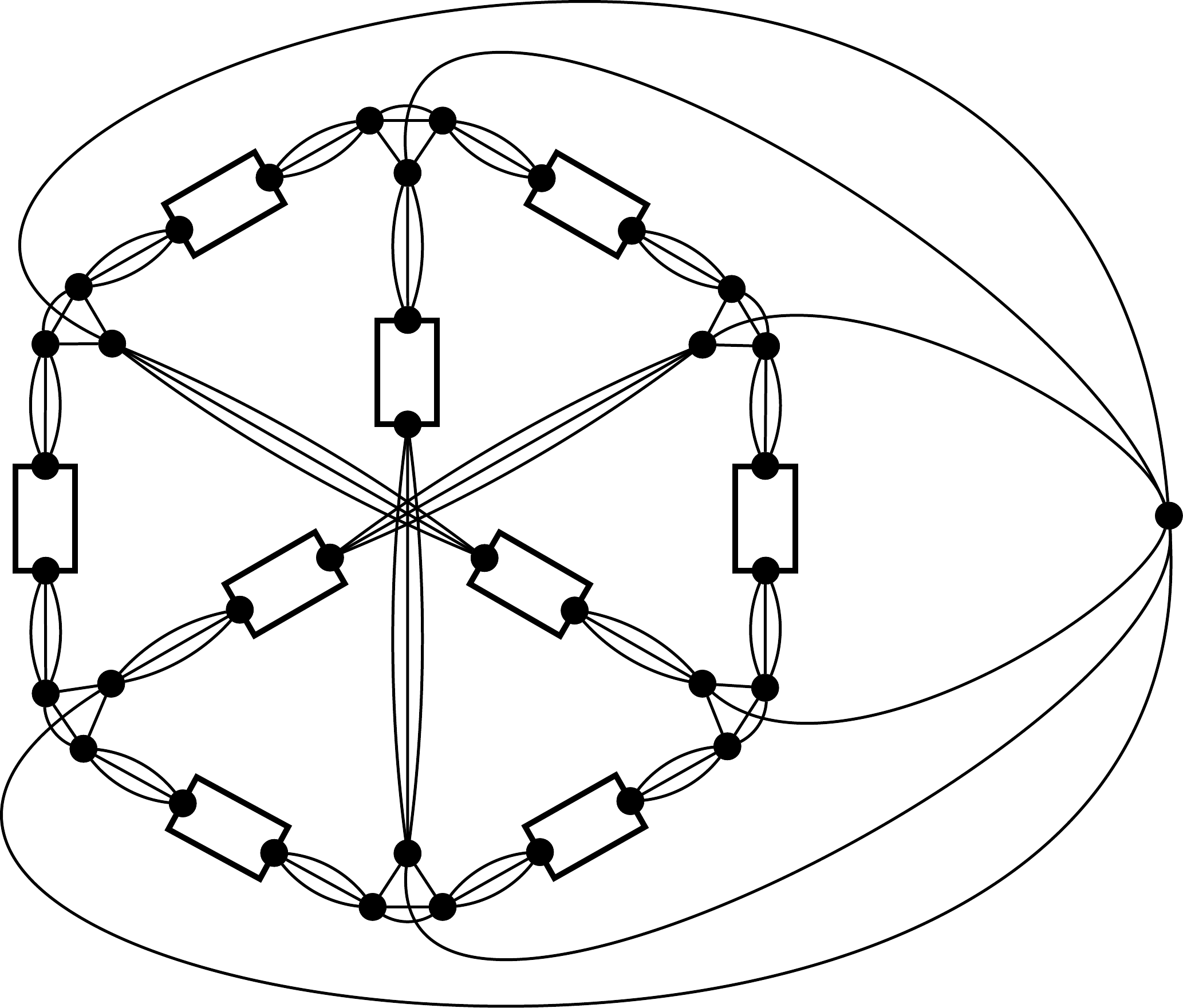}
	\caption{The graph $G_1$, where the boxes are copies of $Q_1$.}\label{Fig:G_1}
\end{figure}

In order to prove that $G_k$ has the desired properties, we need the following two observations.

\begin{obs}\label{1edgeobs}
Let $G$ be a graph and let $u,v \in V(G)$ with $\mu_G(u,v)=t$. Let $H$ be the graph obtained from $G$ by identifying $u$ and $v$ to a (new) vertex $w$ and removing all resulting loops. If $G$ is $2t$-edge-connected and $2t$-regular, then $H$ is $2t$-edge-connected and $2t$-regular.
\end{obs}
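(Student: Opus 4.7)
The plan is to handle the two properties of $H$ separately. Regularity is a degree count at the single new vertex $w$, while edge-connectivity will follow from a natural lift of cuts in $H$ to cuts in $G$.

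For regularity, I would first observe that any vertex $x \neq w$ has the same incident edges in $H$ as in $G$, so $d_H(x) = 2t$ is automatic. For $w$ itself, the $t$ edges joining $u$ to $v$ in $G$ become loops under the identification and are removed, while all other edges incident with $u$ or $v$ survive. Since $d_G(u) = d_G(v) = 2t$, the degree calculation
\[
d_H(w) = (d_G(u) - t) + (d_G(v) - t) = 2t
\]
then finishes this step.

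For edge-connectivity, the plan is to fix an arbitrary non-empty proper subset $X \subset V(H)$ and lift it to $X' \subset V(G)$ by replacing $w$ with $\{u,v\}$ when $w \in X$, and otherwise setting $X' = X$. I would then check that $\partial_H(X) = \partial_G(X')$, identifying each edge of $H$ with the corresponding edge of $G$. Edges of $G$ with both endpoints outside $\{u,v\}$ contribute to the two cuts identically; an edge incident with exactly one of $u,v$ becomes an edge of $H$ incident with $w$, and by construction its endpoint in $\{u,v\}$ lies in $X'$ precisely when $w \in X$, so cut-membership matches; finally, the $t$ removed parallel edges between $u$ and $v$ are never in $\partial_G(X')$ because $\{u,v\} \subseteq X'$ or $\{u,v\} \cap X' = \emptyset$. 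Combining this equality with the $2t$-edge-connectivity of $G$ gives
\[
|\partial_H(X)| = |\partial_G(X')| \geq 2t,
\]
as required.

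I do not anticipate any substantial obstacle; the only mild subtlety is the bookkeeping of the removed parallel edges during the identification, together with checking that $X'$ remains a non-empty proper subset of $V(G)$. This last point is immediate unless $|V(G)| = 2$, in which case $2t$-regularity combined with $\mu_G(u,v) = t$ forces $t = 0$ and the statement is vacuous.
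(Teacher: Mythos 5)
Your proposal is correct and follows essentially the same route as the paper: the degree count at $w$ for regularity, and the lift of a cut $X \subset V(H)$ to $X' \subset V(G)$ (replacing $w$ by $\{u,v\}$) giving $|\partial_H(X)| = |\partial_G(X')| \ge 2t$. The extra bookkeeping you supply for the removed parallel edges and the degenerate case $|V(G)|=2$ is fine but not needed beyond what the paper records.
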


\begin{proof}
Assume that $G$ is $2t$-edge-connected and $2t$-regular. Since $\mu_G(u,v)=t$, it follows that $\partial_G(\{u,v\})=2t$ and hence, $H$ is $2t$-regular. Let $X \subset V(H)$ be a non-empty set. If $w \in X$, then $\vert \partial_{H}(X) \vert = \vert \partial_G(X\setminus \{w\} \cup \{u,v\}) \vert\ge 2t$. If $w \notin X$, then $\vert \partial_{H}(X) \vert = \vert \partial_G(X) \vert\ge 2t$.
\end{proof}

\begin{obs}\label{2edgeobs}
Let $G$ and $G'$ be two graphs and let $u,v \in V(G)$ and $u',v' \in V(G')$ such that $\mu_G(u,v)=\mu_{G'}(u',v')=t$. Let $H$ be the graph obtained from $G$ and $G'$ as follows. Remove the $t$ parallel edges between $u$ and $v$ and the $t$ parallel edges between $u'$ and $v'$. Add $t$ parallel edges between $u$ and $u'$ and $t$ parallel edges between $v$ and $v'$. If $G$ and $G'$ are $2t$-edge-connected and $2t$-regular, then $H$ is $2t$-edge-connected and $2t$-regular.
\end{obs}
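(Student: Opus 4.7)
The regularity of $H$ follows by direct inspection: every vertex outside $\{u,v,u',v'\}$ keeps its incident edges unchanged, while each of these four vertices loses $t$ parallel edges under the removal and gains $t$ parallel edges under the addition, still totaling degree $2t$.

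For the edge-connectivity, the plan is to fix an arbitrary non-empty proper subset $X\subset V(H)$, set $X_1=X\cap V(G)$ and $X_2=X\cap V(G')$, and compare $|\partial_H(X)|$ to $|\partial_G(X_1)|+|\partial_{G'}(X_2)|$. Introduce indicators $a,b,c,d\in\{0,1\}$ recording whether $u,v,u',v'$ lie in $X$. Tracking which edges cross $X$ in each of the four parallel bundles (two removed, two added) gives
\[
|\partial_H(X)| \;=\; |\partial_G(X_1)| + |\partial_{G'}(X_2)| + t\,f(a,b,c,d),
\]
where $f(a,b,c,d):=[a\neq c]+[b\neq d]-[a\neq b]-[c\neq d]$. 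Enumerating the sixteen Boolean patterns shows that $f$ takes values in $\{-2,0,+2\}$.

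I would then argue $|\partial_H(X)|\geq 2t$ by a short case distinction on $f$. The critical case is $f=-2$, which occurs only at $(a,b,c,d)\in\{(0,1,0,1),(1,0,1,0)\}$; in both patterns $X_1$ and $X_2$ are non-empty proper subsets of $V(G)$ and $V(G')$, so the hypothesis forces $|\partial_G(X_1)|,|\partial_{G'}(X_2)|\geq 2t$ and hence $|\partial_H(X)|\geq 2t$. Each $f=0$ pattern forces at least one of $X_1,X_2$ to be a non-empty proper subset of its graph (otherwise $X\in\{\emptyset,V(H)\}$, noting that the borderline pairs $(\emptyset,V(G'))$ and $(V(G),\emptyset)$ fall into $f=+2$ patterns), so the corresponding cut already supplies $2t$. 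Finally, $f=+2$ occurs only at $(0,0,1,1)$ and $(1,1,0,0)$, where the correction term alone delivers the required $2t$.

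The only real obstacle is the bookkeeping: one must carefully verify, for each of the sixteen Boolean patterns, that among the three summands $|\partial_G(X_1)|$, $|\partial_{G'}(X_2)|$, and $t\,f$, enough non-negative contribution is collected to reach $2t$. Since everything reduces to properties of four Boolean indicators, this is a finite and elementary verification rather than a conceptual hurdle, and the conceptual core of the argument is the simple observation that the correction term never drops below $-2$ and that this drop occurs only when both ambient cuts are guaranteed to be large.
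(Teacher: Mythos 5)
Your proof is correct and takes essentially the same route as the paper's: a direct count of $|\partial_H(X)|$ in terms of $|\partial_G(X\cap V(G))|$ and $|\partial_{G'}(X\cap V(G'))|$, where your bound $f\geq -2$ is exactly the paper's estimate $|\partial_H(X)|\geq |\partial_G(X_1)|-t+|\partial_{G'}(X_2)|-t$ in its main case. The only difference is organizational (an exhaustive Boolean enumeration versus the paper's two-case split), and your identity and case analysis check out.
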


\begin{proof}
Clearly, $H$ is $2t$-regular. Note that $G-[u,v]_G$ and $G'-[u',v']_{G'}$ are $t$-edge-connected. 
Let $X \subset V(H)$ be a non-empty set. 

	{\bf Case 1}. $X \cap V(G) = V(G)$ or $X \cap V(G') = V(G')$. 
	
	Say $V(G) \subseteq X$, 
then $\partial_{H}(X)$ contains either
(i) $[u,u']_H$ and $[v,v']_{H}$ or (ii) one of $[u,u']_H$ and $[v,v']_{H}$ and a $t_1$-edge-cut
of $G'-[u',v']_{G'}$ with $t_1 \geq t$ or (iii) a $t_2$-edge-cut of $G'-[u',v']_{G'}$ with
$t_2 \geq 2t$. 

	{\bf Case 2}. $X \cap V(G) \neq V(G)$ and $X \cap V(G') \neq V(G')$.

If $X \cap V(G) \neq \emptyset$ and $X \cap V(G') \neq \emptyset$, then $\vert \partial_{H}(X) \vert \geq \vert \partial_G(X \cap V(G)) \vert-t + \vert \partial_{G'}(X \cap V(G')) \vert-t \geq 2t$. If $X \cap V(G') = \emptyset$, then $\vert \partial_{H}(X) \vert \geq \vert \partial_G(X) \vert \geq 2t$.
\end{proof}

\begin{theo}\label{G_k}
For all $k \geq 1$, $G_k$ is a $(4k+2)$-edge-connected $(4k+2)$-graph without $4k$ pairwise disjoint perfect matchings. 
\end{theo}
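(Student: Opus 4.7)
My plan is to establish three properties of $G_k$: $(4k+2)$-regularity, $(4k+2)$-edge-connectivity (which in particular makes it a $(4k+2)$-graph), and the non-existence of a $4k$-PDPM. Regularity is a direct degree count: every interior vertex of each gadget $Q_k^e$ keeps its degree from $P_k$, the boundary vertices $v_1^1, v_1^2$ lose the $2k+1$ edges $u_1^iv_1^i$ but regain $2k+1$ external edges, and each vertex of $V(S_k)$ keeps its original $S_k$-degree because every removed $(2k+1)$-multi-edge of $D_k \cup E_k$ is compensated by $2k+1$ new parallel edges toward the adjacent gadget.

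For $(4k+2)$-edge-connectivity I plan an inductive argument based on Observations \ref{1edgeobs} and \ref{2edgeobs}. First, $P_k$ itself is $(4k+2)$-edge-connected by Lemma \ref{lem:P+matchings_Connectivity}: the required bound $\mu_{P_k}(u,v) \leq 2k+1$ follows from the fact that every edge of the Petersen graph lies in exactly two of the six perfect matchings $M_0, \dots, M_5$, so its multiplicity in $P_k = P + k(M_0+M_1+M_2) + (k-1)M_5$ is at most $1 + 2k$. Next, let $\widetilde{Q_k}$ be the graph obtained from $Q_k$ by adding a $(2k+1)$-multi-edge between $v_1^1$ and $v_1^2$: this graph arises from the disjoint union $P_k^1 \cup P_k^2$ by first applying Observation \ref{2edgeobs} (swapping the two multi-edges $u_1^iv_1^i$ for multi-edges $u_1^1u_1^2$ and $v_1^1v_1^2$) and then identifying $u_1^1$ with $u_1^2$ via Observation \ref{1edgeobs}, so $\widetilde{Q_k}$ is $(4k+2)$-edge-connected and $(4k+2)$-regular. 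Finally, $G_k$ arises from $S_k$ by repeatedly applying Observation \ref{2edgeobs} paired with $\widetilde{Q_k}$, substituting each $(2k+1)$-multi-edge of $D_k \cup E_k$ in $S_k$ by a copy of $Q_k$ attached via $v_1^1$ and $v_1^2$; since $S_k$ is $(4k+2)$-edge-connected and $(4k+2)$-regular by Lemma \ref{lem:S_k_connectivity}, so is $G_k$.

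The main work is ruling out a $4k$-PDPM, which I plan to do by a parity argument at the central vertex $w$. Assume for contradiction that $\mathcal N = \{N_1, \dots, N_{4k}\}$ is such a family and write $N = \bigcup_j N_j$. For each $i \in \{1, \dots, 4k+2\}$ define
\[
a_i = |N \cap [x_i,y_i]_{G_k}|, \quad b_i = |N \cap [x_i,z_i]_{G_k}|, \quad c_i = |N \cap [y_i,z_i]_{G_k}|, \quad d_i = |N \cap [w,z_i]_{G_k}|.
\]
Applying Lemma \ref{lem:Qk} to the three gadgets adjacent to $x_i$, $y_i$, $z_i$ (namely $Q_k^{y_{i-1}x_i}$, $Q_k^{y_ix_{i+1}}$, and the gadget $Q_k^{e'}$ attached to $z_i$ via the unique $E_k$-edge at $z_i$) shows that exactly $2k$ edges of $N$ enter each of these three vertices through the adjacent gadget; combined with the fact that every vertex is covered by all $4k$ matchings, this yields the linear system $a_i + b_i = 2k$, $a_i + c_i = 2k$, and $b_i + c_i + d_i = 2k$. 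Subtracting the first two gives $b_i = c_i$, and the third then reduces to $2b_i + d_i = 2k$, so $d_i$ is even. Since the edges of $A_k$ are simple, $d_i \leq \mu_{G_k}(w,z_i) = 1$, forcing $d_i = 0$ for every $i$. On the other hand, $w$ is covered by each of the $4k$ matchings, so $\sum_{i=1}^{4k+2} d_i = 4k > 0$, a contradiction. The main obstacle — and the design insight behind $S_k$ — is precisely the interplay between the three gadget equations forcing $d_i$ to be even and the simplicity of the edges of $A_k$ forcing $d_i \leq 1$; this parity clash is what makes the construction work in the case $r \equiv 2 \pmod 4$.
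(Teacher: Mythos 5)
Your proof is correct and takes essentially the same route as the paper: the connectivity argument via Lemma \ref{lem:P+matchings_Connectivity} and Observations \ref{1edgeobs} and \ref{2edgeobs} is identical, and your linear system at $\{x_i,y_i,z_i\}$ is a repackaging of the paper's parity contradiction, which picks an $i$ with $wz_i\in N$, uses Lemma \ref{lem:Qk} to get $|N\cap\partial_{G_k}(\{x_i,y_i,z_i\})|=6k+1$, and contrasts this with Observation \ref{odd-insect} forcing that quantity to be even. The one point to add is that $(4k+2)$-edge-connectivity and regularity alone do not make $G_k$ a $(4k+2)$-graph; you also need $|V(G_k)|$ to be even, which holds since $|V(G_k)|=|V(S_k)|+19\,|D_k\cup E_k|=126k+64$.
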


\begin{proof}
By Lemma \ref{lem:P+matchings_Connectivity}, $P_k$ is $(4k+2)$-edge-connected and $(4k+2)$-regular. Hence, by Observations \ref{1edgeobs} and \ref{2edgeobs}, the graph $Q_k+ (2k+1)\{v_1^1 v_1^2\}$ is $(4k+2)$-edge-connected and $(4k+2)$-regular. Thus, $G_k$ is $(4k+2)$-edge-connected and $(4k+2)$-regular by Observation~\ref{2edgeobs} again. Furthermore, the order of $G_k$ is $\vert V(S_k) \vert + 19 \vert D_k \cup E_k \vert$, which is even. Suppose to the contrary that $G_k$ has $4k$ pairwise disjoint perfect matchings. Let $N \subseteq E(G_k)$ be the union of them and let $wz_i \in N$. Lemma \ref{lem:Qk} implies $\vert N \cap \partial_{G_k}(\{x_i,y_i,z_i\}) \vert=6k+1$. On the other hand, every perfect matching contains an odd number of edges of $\partial_{G_k}(\{x_i,y_i,z_i\})$ by Observation \ref{odd-insect}. Therefore, $\vert N \cap \partial_{G_k}(\{x_i,y_i,z_i\}) \vert$ is even, a contradiction.
\end{proof}

Theorem \ref{G_k} implies that $m(r) \leq r-3$ if $r \equiv 2 \mod 4$. Thus,
Theorem \ref{4k+1con} and Corollary \ref{cor: disprove Thomassen even} are proved.

\section{Equivalences for statements on the existence of a $k$-PDPM  }\label{sec:Equivalences}

The graph $G_k$ from the previous section has many $2$-vertex-cuts. 
The following observation shows that
such a construction will not apply for the odd case of Problem \ref{Thom-r-2}.

\begin{obs}\label{rr-3conn}
For odd $r \geq 3$, every $r$-edge-connected $r$-graph is $3$-connected.
\end{obs}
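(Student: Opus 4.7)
The plan is to rule out $1$-vertex-cuts and $2$-vertex-cuts separately, using only $r$-edge-connectivity together with the parity consequences of $r$ being odd. As a setup, note that since $r$ is odd and $G$ is $r$-regular, handshake forces $|V(G)|$ to be even. For the $1$-cut case, suppose $v$ is a cut vertex of $G$ and let $C$ be a component of $G-v$. Then $\partial_G(C) = [v,C]_G$, so $|\partial_G(C)| \leq d_G(v) = r$; combined with $r$-edge-connectivity this forces equality, hence $v$ has no edge to any other component of $G-v$, contradicting connectivity of $G$.

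For the $2$-cut case, suppose $\{u,v\}$ separates $G$ into components $C_1,\dots,C_m$ with $m \geq 2$. Write $c = \mu_G(u,v)$, $a_i = |[u,C_i]_G|$, $b_i = |[v,C_i]_G|$, so $\sum_i a_i = \sum_i b_i = r-c$. Applying $r$-edge-connectivity to each $C_i$ gives $a_i + b_i = |\partial_G(C_i)| \geq r$, and summing yields
$$2(r-c) \;=\; \sum_i (a_i + b_i) \;\geq\; mr \;\geq\; 2r.$$
This forces $m = 2$, $c = 0$, and $a_1+b_1 = a_2+b_2 = r$. The handshake identity $r|C_i| = 2|E(G[C_i])| + (a_i+b_i)$ together with $a_i+b_i = r$ odd then implies that both $|C_1|$ and $|C_2|$ are odd (consistent with their sum being $|V(G)|-2$, which is even).

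The finishing step is a parity obstruction on $r$. From $a_1+a_2 = r = a_1+b_1$ we obtain $b_1 = a_2$, and symmetrically $b_2 = a_1$. Now apply $r$-edge-connectivity to $X = C_1 \cup \{u\}$: since $c = 0$ and $[C_1,C_2]_G = \emptyset$, we compute
$$|\partial_G(X)| \;=\; b_1 + a_2 \;=\; 2a_2 \;\geq\; r.$$
The symmetric computation for $C_2 \cup \{u\}$ gives $2a_1 \geq r$. Combined with $a_1+a_2 = r$, this forces $a_1 = a_2 = r/2$, contradicting that $r$ is odd.

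I do not expect a serious obstacle: the whole argument is an exercise in combining the edge-connectivity lower bound with handshake-type parity. The only point needing a bit of care is recognising that the symmetry $b_1 = a_2$, $b_2 = a_1$ (forced by $c = 0$ and the component edge count being exactly $r$) is what makes the doubled bound $2a_i \geq r$ sharp enough to produce the odd/even contradiction. Note that the odd-cut condition in the definition of an $r$-graph is not used explicitly; the stronger hypothesis of $r$-edge-connectivity is already sufficient to drive the proof.
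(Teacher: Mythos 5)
Your proof is correct and follows essentially the same route as the paper: both arguments count the edges crossing the alleged $2$-vertex-cut, apply $r$-edge-connectivity to the components $C_i$ and to the sets $C_i\cup\{u\}$, and derive the contradiction from the fact that these cut sizes are even while $r$ is odd. The only cosmetic difference is that you first pin down $\mu_G(u,v)=0$ and $|\partial_G(C_i)|=r$ exactly (which also disposes of the case of three or more components, asserted without detail in the paper) instead of splitting into cases according to the parity of $|C_i|$.
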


\begin{proof}
Let $G$ be an $r$-edge-connected $r$-graph. Clearly, $G$ is of even order and $2$-connected. Suppose that there are two vertices $v_1, v_2$ such that $G-\{v_1,v_2\}$ is not connected.
Then $G-\{v_1,v_2\}$ has exactly two components $A$ and $B$. Since the order of $G$ is even, $A$ and $B$ are either both of even order or both of odd order. In the first case, $\vert \partial(V(A)) \vert + \vert \partial(V(B)) \vert \leq \vert \partial(v_1) \vert + \vert \partial(v_2) \vert = 2r$. Since $A$ and $B$ are of even order, $\vert \partial(V(A)) \vert$ and $\vert \partial(V(B)) \vert$ are both even. Hence, it follows that either $ \vert \partial(V(A)) \vert < r$ or $ \vert \partial(V(B)) \vert < r$ since $r$ is odd. In the second case, $\vert \partial(V(A)\cup \{v_1\}) \vert + \vert \partial(V(B) \cup \{v_1\}) \vert = \vert \partial(v_1) \vert + \vert \partial(v_2) \vert = 2r$. Thus, $ \vert \partial(V(A) \cup \{v_1\}) \vert < r$ or $ \vert \partial(V(B) \cup \{v_1\}) \vert < r$ since $A$ and $B$ are of odd order. Therefore, both cases lead to a contradiction with the assumption that $G$ is $r$-edge-connected.
\end{proof}

We are going to prove some equivalent statements about the existence of a $k$-PDPM in  $r$-edge-connected $r$-graphs.

\begin{defi}\label{Replace-H}
Let  $G$ and $H$ be two disjoint $r$-regular graphs with $u\in V(G)$ and $v\in V(H)$. Let $(G,u)|(H,v)$ be the set of all graphs obtained by replacing the vertex $u$ of $G$ by $(H,v)$, that is, deleting $u$ from $G$ and $v$ from $H$, and then adding $r$ edges between $N_G(u)$ and $N_H(v)$ such that the resulting graph is regular.
\end{defi}

\begin{lem}\label{GU-rcon}
If  $G$ and $H$ are two disjoint $r$-edge-connected $r$-regular graphs with $u \in V(G)$ and $v\in V(H)$, then every graph in $(G,u)|(H,v)$ is $r$-regular and $r$-edge-connected. 
\end{lem}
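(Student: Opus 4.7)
The plan is to fix an arbitrary $K\in (G,u)|(H,v)$. Regularity of $K$ is immediate from Definition \ref{Replace-H}. For the edge-connectivity, I will take an arbitrary non-empty proper subset $X\subset V(K)$, split it as $X_G=X\cap V(G-u)$ and $X_H=X\cap V(H-v)$, and bound $|\partial_K(X)|$ from below by $r$. The edges of $K$ are partitioned into edges of $G-u$, edges of $H-v$, and the set $F$ of $r$ newly added edges, which form a bipartite graph between $N_G(u)$ and $N_H(v)$. For each vertex $w\in N_G(u)$ the number of new edges of $F$ incident to $w$ is exactly $\mu_G(u,w)$, and the analogous statement holds on the $H$-side.

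First I handle the degenerate cases. If $X_G=\emptyset$ (symmetrically $X_H=\emptyset$), then the edges of $\partial_H(X_H)$ that are not incident to $v$ lie in $\partial_K(X)$, while each edge of $\partial_H(X_H)$ that is incident to $v$ contributes exactly one new edge in $F$ from $X_H$ to $V(G-u)$, also in $\partial_K(X)$. Hence $|\partial_K(X)|=|\partial_H(X_H)|\ge r$ because $X_H$ is a non-empty proper subset of $V(H)$ and $H$ is $r$-edge-connected. The cases $Y_G:=V(G-u)\setminus X_G=\emptyset$ or $Y_H:=V(H-v)\setminus X_H=\emptyset$ (with the other side non-trivial) reduce to the above by passing to the complement $V(K)\setminus X$.

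For the remaining main case, all four sets $X_G, Y_G, X_H, Y_H$ are non-empty. I will introduce the counts $\alpha,\beta,\gamma,\delta$ for the number of new edges of $F$ lying respectively between $X_G$ and $X_H$, $X_G$ and $Y_H$, $Y_G$ and $X_H$, $Y_G$ and $Y_H$, so that $\alpha+\beta+\gamma+\delta=r$. Tracking how $\partial_K(X)$ splits across $E(G-u)$, $E(H-v)$ and $F$, and using that the edges of $[u,X_G]_G$ and $[v,X_H]_H$ are replaced in $K$ by the new edges incident to $X_G$ and $X_H$ respectively, a direct count gives the two identities
\begin{equation*}
|\partial_K(X)|=|\partial_G(X_G)|+|\partial_H(X_H)|-2\alpha,
\end{equation*}
\begin{equation*}
|\partial_K(X)|=|\partial_G(X_G\cup\{u\})|+|\partial_H(X_H\cup\{v\})|-2\delta.
\end{equation*}
Since $X_G,\,X_G\cup\{u\}$ are non-empty proper subsets of $V(G)$, and likewise for $H$, the $r$-edge-connectivity of $G$ and $H$ yields $|\partial_K(X)|\ge 2r-2\alpha$ and $|\partial_K(X)|\ge 2r-2\delta$. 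From $\alpha+\delta\le r$ one of $\alpha,\delta$ is at most $r/2$, so at least one of these inequalities gives $|\partial_K(X)|\ge r$.

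The main obstacle is verifying the two cut identities in the last case without miscounting the contributions of the new edges; the rest of the argument is just applying $r$-edge-connectivity of $G$ and $H$ and a pigeonhole observation on $\alpha+\delta\le r$.
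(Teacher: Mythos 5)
Your proof is correct. The structure matches the paper's: regularity is immediate, the cases where $X$ lies entirely in $V(G-u)$ or $V(H-v)$ are dispatched by observing the cut equals a cut of $G$ or $H$, and the main case is a double-counting argument on the four-way partition $X_G, Y_G, X_H, Y_H$. The mechanism in the main case differs slightly and is worth noting. The paper argues by contradiction: if $|\partial_{G'}(X)|\le r-1$ then one of the two surviving cross-cuts $[X_1,X_3]_{G'}$, $[X_2,X_4]_{G'}$ has size at most $\lfloor\frac{r-1}{2}\rfloor$, and this is read as a too-small edge-cut of $G-u$ or $H-v$; strictly, closing that contradiction requires the (unstated) fact that deleting one vertex from an $r$-edge-connected $r$-regular graph leaves a graph that is still $\lceil r/2\rceil$-edge-connected, which is itself a small double-counting step. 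You instead pigeonhole on the new edges inside $X$ versus inside its complement ($\alpha$ versus $\delta$) and apply the $r$-edge-connectivity of $G$ and $H$ directly to the four sets $X_G$, $X_G\cup\{u\}$, $X_H$, $X_H\cup\{v\}$ via your two exact cut identities (both of which I verified). This buys you a direct, fully self-contained argument with no implicit lemma about vertex-deleted subgraphs; the paper's version is terser but leaves that half-step to the reader. Either way the lemma is proved.
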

\begin{proof}
Suppose to the contrary that there exists a graph $G'\in (G,u)|(H,v)$ with a set $X\subset V(G')$ such that $|\partial_{G'}(X)|\leq r-1$. 
 If $X\subseteq V(G-u)$ or $X\subseteq V(H-v)$, then $|\partial_G(X)|=|\partial_{G'}(X)|\leq r-1$ or $|\partial_H(X)|=|\partial_{G'}(X)|\leq r-1$, a contradiction. Hence, by symmetry, we assume  
$X\cap V(G-u)= X_1$,  $X\cap V(H-v)= X_2$, $X^c\cap V(G-u)= X_3$ and $X^c\cap V(H-v)= X_4$, where $X^c=V(G')-X$ and $X_i\neq \emptyset$ for each $i\in \{1,2,3,4\}$.  Since $|\partial_{G'}(X)|\leq r-1$, we have $|[X_1,X_3]_{G'}|\leq \lfloor\frac{r-1}{2}\rfloor$ or $|[X_2,X_4]_{G'}|\leq \lfloor\frac{r-1}{2}\rfloor$. It implies that $G-u$ or $H-v$ has an edge-cut of cardinality at most $\lfloor\frac{r-1}{2}\rfloor$, which contradicts the assumption that both $G$ and $H$ are $r$-edge-connected.
\end{proof}

 Let $e$ be an edge of a graph $G$ and $\ca M$ be a $k$-PDPM of $G$. We say that $\ca M$ \emph{contains} $e$ if there is an $N\in \ca M$ such that $e\in N.$ Otherwise, we say that $\ca M$ \emph{avoids} $e$. In what follows we show that if every  $r$-edge-connected $r$-graph has a $k$-PDPM, then every  $r$-edge-connected $r$-graph has a $k$-PDPM containing or avoiding a 
 fixed set of edges.

\begin{theo}\label{eq-st1}
 Let $r\geq4$ and $2\leq k\leq r-2$. The following statements are equivalent. \\
 (i) Every  $r$-edge-connected $r$-graph has a $k$-PDPM.\\
 (ii) For every $r$-edge-connected $r$-graph $G$ and every  $e\in E(G)$, there exists
  a $k$-PDPM of $G$ containing $e$.\\
 (iii) For every $r$-edge-connected $r$-graph $G$ and every  $e\in E(G)$, there exists a
$k$-PDPM of $G$ avoiding $e$.\\
 (iv)  For every $r$-edge-connected $r$-graph $G$, every $v\in V(G)$ and $e\in\partial_G(v)$, there are at least $s=r-\lfloor{\frac{r-k}{2}}\rfloor-1$ edges $e_1,\ldots,e_s$ in  $\partial_G(v) \setminus\{e\}$ such that, for each $i\in\{1,\ldots,s\}$,  there exists a $k$-PDPM of $G$ containing $e_i$ and $e$.
\end{theo}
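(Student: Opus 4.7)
The plan is to establish the equivalence by proving three trivial implications and three substantive ones. The trivial ones are (ii) $\Rightarrow$ (i), (iii) $\Rightarrow$ (i), and (iv) $\Rightarrow$ (ii); for the last of these, one first verifies that $s = r - \lfloor (r-k)/2 \rfloor - 1 \geq 1$ under the hypotheses $r \geq 4$ and $2 \leq k \leq r-2$, so that for any edge $e$, picking an endpoint $v$ of $e$ and applying (iv) furnishes at least one edge $e_1 \in \partial_G(v) \setminus \{e\}$ together with a $k$-PDPM containing both $e$ and $e_1$; in particular, this $k$-PDPM contains $e$, proving (ii).

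For the substantive direction (i) $\Rightarrow$ (ii), given $G$ and $e = uv$, I would take a disjoint second copy $G^c$ of $G$ with $u^c, v^c$ corresponding to $u, v$, and construct $G^* \in (G, u)\,|\,(G^c, u^c)$ using a bijection $\sigma : N_{G^c}(u^c) \to N_G(u)$ with $\sigma(v^c) = v$. This produces a distinguished bijection edge $f = vv^c$ in $G^*$ that encodes $e$. By Lemma \ref{GU-rcon}, $G^*$ is $r$-regular and $r$-edge-connected, so (i) yields a $k$-PDPM $\mathcal{N}$ of $G^*$. Each matching of $\mathcal{N}$ intersects the cut $C = \partial_{G^*}(V(G^c) \setminus \{u^c\})$ of $r$ bijection edges in an odd number, by Observation \ref{odd-insect}. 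The aim is then to argue that each matching uses exactly one cut edge (so that $\mathcal{N}$ projects, by contracting $V(G^c) \setminus \{u^c\}$ to a single vertex, to a $k$-PDPM of $G$), and that the distinguished edge $f$ must appear in some $N_i$, which translates into a $k$-PDPM of $G$ containing $e$. For (i) $\Rightarrow$ (iii), an analogous construction is used with the bijection or the gadget chosen so that $f$ must instead be avoided by $\mathcal{N}$.

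For (ii) $\Rightarrow$ (iv), applying (ii) to $G$ and $e$ already supplies $k-1$ partner edges at $v$ (those used by the other matchings of the guaranteed $k$-PDPM containing $e$). To reach the full $s$ partner edges I would apply (ii) to a family of graphs obtained from $G$ by local vertex replacements that preserve the incidence of $e$ while permuting the other edges at $v$; each modified graph is $r$-regular and $r$-edge-connected by Lemma \ref{GU-rcon}. The specific threshold $s = r - \lfloor (r-k)/2 \rfloor - 1$ arises from a parity/pairing argument: if fewer than $s$ partner edges existed, the excess of ``bad'' edges at $v$ could be bundled into a replacement destroying every $k$-PDPM through $e$ in the modified graph, contradicting (ii), and hence (i).

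The principal obstacle throughout is the projection argument in (i) $\Rightarrow$ (ii): the $k$-PDPM $\mathcal{N}$ may a priori have matchings using $1, 3, \ldots$ cut edges, and there is no direct reason for the distinguished edge $f$ to appear in any $N_i$. Controlling this demands either a careful choice of gadget (in place of $G^c$) whose near-perfect matchings force exactly one boundary vertex to be matched externally, or a symmetry argument between the two copies of $G$ inside $G^*$ forcing $f \in \bigcup_i N_i$. Once (i) $\Rightarrow$ (ii) is secured, combining it with (ii) $\Rightarrow$ (iv) yields (i) $\Rightarrow$ (iv) and closes the equivalence.
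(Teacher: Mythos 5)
Your overall architecture (the three trivial implications plus $(i)\Rightarrow(ii)$, $(i)\Rightarrow(iii)$, $(ii)\Rightarrow(iv)$) matches the paper's, and your check that $s\ge 1$ for $(iv)\Rightarrow(ii)$ is fine. But both substantive implications contain genuine gaps that your proposed constructions do not close. For $(i)\Rightarrow(ii)$ you glue two copies of $G$ at a vertex and hope that every matching of the resulting $k$-PDPM crosses the $r$-edge cut exactly once and that the distinguished edge $f$ is used by some matching; as you yourself note, neither is forced --- a matching may cross the cut $3,5,\ldots$ times, and nothing compels $f\in\bigcup_i N_i$. The paper resolves exactly this with a different gadget: take a $2r$-circuit $u_1\cdots u_{2r}$ with suitably multiplied edges plus two hub vertices $u$ (joined to all odd $u_i$) and $u'$ (joined to all even $u_i$), and replace each of the $r$ odd vertices by a copy $(G^i,v^i)$ of $G$ with $v_1^i$ adjacent to $u$. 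Any perfect matching $M$ of the resulting $r$-graph matches each of the $r$ vertices of $U_e$ and the hub $u$ exactly once, so $\sum_i m_i=r$ where $m_i=|M\cap\partial(V(G^i-v^i))|$; combined with $m_i\ge 1$ from Observation \ref{odd-insect} this forces $m_i=1$ for every $i$, so each matching restricts to a perfect matching of each copy. Since the $k\le r-2<r$ matchings use $k$ distinct edges at the degree-$r$ hub $u$, pigeonhole gives an $i$ with $uv_1^i$ used and a $j$ with $uv_1^j$ unused, yielding $(ii)$ and $(iii)$ simultaneously. Your two-copy construction has no analogue of the hub, so there is no mechanism to obtain either conclusion.

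For $(ii)\Rightarrow(iv)$ your idea of ``bundling the bad edges into a replacement'' points in the right direction, but the crucial mechanism is missing: one must (a) force a $k$-PDPM of the auxiliary graph whose restriction to a copy of $G$ contains $e$, and (b) route all $t=\lfloor\frac{r-k}{2}\rfloor+1$ bad edges of that copy to a single auxiliary vertex so that counting the matching edges at that vertex yields a contradiction. The paper achieves this with a $K_4$-based gadget: $u_1,u_3$ are replaced by copies $G^1,G^3$ so that $v_1^1v_1^3$ is an edge and $[u_2,V^1\cup V^3]$ is as large as possible, where $V^i$ collects the bad neighbours; a $k$-PDPM containing $u_2u_4$ (which exists by $(ii)$) must place $v_1^1v_1^3$ in the same matching, hence --- by the defining property of the bad edges --- avoids all of $[u_2,V^1\cup V^3]$, leaving at most $k-1$ edges at $u_2$ for $k$ disjoint matchings, a contradiction. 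Note also that the $k-1$ partner edges supplied by a single $k$-PDPM containing $e$ can be fewer than $s$, so that observation alone cannot carry the argument. As written, your proposal correctly identifies the obstacles but does not supply the gadgets that overcome them.
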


 \begin{proof}
	Clearly,  each of $(ii)$, $(iii)$ and $(iv)$ implies $(i)$. Thus, it suffices to prove that $(i)$ implies $(ii)$; $(i)$ implies $(iii)$; and $(ii)$ implies $(iv)$.  
	
	$(i) \Rightarrow (ii), (iii)$. Assume that 
	statement $(i)$ is true and let $G$ be an $r$-edge-connected $r$-graph with an edge $vv_1$. We use the same construction for both implications. Let $C_{2r}=u_1u_2\ldots u_{2r}u_1$ be a circuit of length $2r$. Denote $U_o=\{u_i:i~\text{is odd}\}$ and $U_e=\{u_i:i~\text{is even} \}$.
	We construct a new graph $H$ from $C_{2r}$ as follows. 
	Replace each edge of $C_{2r}$ by $\frac{r-1}{2}$ parallel edges, if $r$ is odd, and replace the edge $u_iu_{i+1}$ ($u_ju_{j+1}$ and $u_{2r}u_1$, respectively) of $C_{2r}$ by $\frac{r}{2}$ ($\frac{r-2}{2}$, respectively) parallel edges for each $u_i\in U_o$ ($u_j\in U_e\setminus\{u_{2r}\}$, respectively),  if $r$ is even. Add two new vertices, denoted by $u$ and $u'$, such that $u$ is adjacent to each vertex in $U_o$ and $u'$ is adjacent to each vertex in $U_e$, see Figure \ref{fig1}. Clearly, $H$ is $r$-regular and $r$-edge-connected. 
	
	Let $I=\{ i  : i \in \{1, \ldots , 2r\}, i \text{ is odd}\}$ and for every $i \in I$ let $G^i$ be a copy of $G$, in which the vertices are labeled accordingly by using an upper index. For example, $v^i$ is the vertex of $G^i$ that corresponds to the vertex $v$ of $G$. Following the procedure described in Definition \ref{Replace-H}, we construct another new graph $H'$ from $H$ by successively replacing each vertex $u_i\in U_o$ of $H$ by $(G^i,v^i)$ such that for each $i \in I$ the vertex $v^i_1$ is adjacent to $u$ (see Figure \ref{fig2}). By Lemma \ref{GU-rcon}, $H'$ is $r$-regular and $r$-edge-connected. Note that $H'$
	is an $r$-graph since it is of even order.
	
	In order to prove statements $(ii)$ and $(iii)$ we observe the following. Let $M$ be an arbitrary perfect matching of $H'$ and for every $i \in I$, let $m_i=\vert \partial_{H'} (V(G^i-v^i)) \cap M \vert$. The set $M$ contains exactly one edge incident with $u$ and one edge incident with $u'$. Thus, by the construction of $H'$ we have $\sum_{i \in I} m_i=\vert M \cap \partial_{H'}(U_e)\vert=\vert I \vert$. Observation \ref{odd-insect} implies $m_i\geq 1$ and hence, $m_i=1$ for every $i \in I$. Thus, every perfect matching of $H'$ can be translated into a perfect matching of $G^i$ for each $i \in I$.
	
	Now, by statement $(i)$, $H'$ has a $k$-PDPM $\ca N$. Furthermore there are two integers $i,j \in I$ such that $\ca N$ contains $uv_1^{i}$ and avoids $uv_1^j$. By the above observation, the graph $G^i$ has a $k$-PDPM containing $v^iv_1^i$ and $G^j$ has a $k$-PDPM avoiding $v^jv_1^j$, which proves statements $(ii)$ and $(iii)$.

	$(ii) \Rightarrow (iv)$. Let $G$ be an $r$-edge-connected  $r$-graph and let $e_1=vv_1 \in E(G)$. Suppose $\vert \{ e \in \partial_G(v)\setminus \{e_1\}: \text{there exists a } k \text{-PDPM of } G \text{ containing } e, e_1\} \vert<s$. As a consequence, $\partial_G(v)\setminus \{e_1\}$ contains at least $t=r-1-(s-1)=\lfloor{\frac{r-k}{2}}\rfloor+1$ edges $e_2, \ldots, e_{t+1}$, such that for every $j \in \{2, \ldots, t+1\}$ there is no $k$-PDPM of $G$ containing $e_1$ and $e_j$. For each $j \in \{2, \ldots,t+1\}$ denote $e_j=vv_j$.
	
	Let $K_4$ be the complete graph of order $4$ and let $V(K_4)=\{u_1,u_2,u_3,u_4\}$. We construct a new $r$-regular graph $H$ from $K_4$ by replacing each edge of $\{u_1u_2, u_2u_3, u_3u_4, u_4u_1\}$ by $\frac{r-1}{2}$ parallel edges if $r$ is odd, and  replacing each edge of $\{u_1u_2, u_3u_4\}$ ($\{u_2u_3, u_4u_1\}$, respectively)  by $\frac{r}{2}$ ($\frac{r-2}{2}$, respectively) parallel edges  if $r$ is even, see Figure \ref{fig3}. Clearly, $H$ is $r$-edge-connected.
	
	For each $i\in\{1,3\}$, let $G^i$ be a copy of $G$ in which the vertices and edges are labeled accordingly by using an upper index and let $V^i=\{v^i_j: j \in\{2,\ldots, t+1\}\}$.
	Following the procedure in Definition \ref{Replace-H}, we construct another new graph $H'$ from $H$ by successively replacing each vertex $u_i\in\{u_1,u_3\}$ of $H$ by $(G^i,v^i)$ such that $v^1_1$ is adjacent to $v^3_1$ and 
	$[u_2,V^1\cup V^3]_{H'}$ contains as many edges as possible,  
	see Figure \ref{fig4}. The graph $H'$ is 
	$r$-regular and $r$-edge-connected by Lemma \ref{GU-rcon}. By statement $(ii)$, $H'$ has a $k$-PDPM $\mathcal{N}=\{N_1,\ldots,N_k\}$ containing $u_2u_4$. Clearly, $v^1_1v^3_1$  and $u_2u_4$ are in the same perfect matching of $\mathcal{N}$ and so each $N_i\in \mathcal{N}$ contains exactly one edge of $\partial_{H'} (V(G^1-v^1))$ and one edge of $\partial_{H'} (V(G^3-v^3))$ by Observation \ref{odd-insect}. Thus, $N_i\cap [u_2,V^1\cup V^3]_{H'}=\emptyset$ for each $i\in\{1,\ldots,k\}$. Now we consider the following two cases.

	{\bf Case 1}. $r$ is odd.
	
	Since $t=\lfloor{\frac{r-k}{2}}\rfloor+1\leq \frac{r-1}{2}$, the set $[u_2,V^i]_{H'}$ contains  $t$ edges for each $i\in\{1,3\}$ by the construction of $H'$.
	Note that $N_i\cap [u_2,V^1\cup V^3]_{H'}=\emptyset$ for each $i\in\{1,\ldots,k\}$.
	Hence,  the $k$-PDPM $\mathcal{N}$ of $H'$ contains at most $r-2t=r-2(\lfloor{\frac{r-k}{2}}\rfloor+1)\leq r-2(\frac{r-k-1}{2}+1)=k-1$ edges in $\partial_{H'}(u_2)$, a contradiction.  
	
	{\bf Case 2}. $r$ is even.
	
	{\bf Case 2.1}.  $k=2$.
	
	Since $t=\lfloor{\frac{r-2}{2}}\rfloor+1=\frac{r}{2}$,  the set $[u_2,V^1\cup V^3]_{H'}$ contains  $2t-1=r-1$ edges. Hence, the $k$-PDPM $\mathcal{N}$ of $H'$ contains at most $r-(2t-1)=1$ edges in $\partial_{H'}(u_2)$, a contradiction.   	
	
	{\bf Case 2.2}.  $k > 2$.
	
	Since $t=\lfloor{\frac{r-k}{2}}\rfloor+1\leq \frac{r-4}{2}+1=\frac{r}{2}-1$, we have that $[u_2,V^i]_{H'}$ contains  $t$ edges  for each $i\in\{1,3\}$ by the construction of $H'$. Hence, 
	the $k$-PDPM $\mathcal{N}$ of $H'$ contains at most $r-2t=r-2(\lfloor{\frac{r-k}{2}}\rfloor+1)\leq r-2(\frac{r-k-1}{2}+1)=k-1$ edges in $\partial_{H'}(u_2)$, a contradiction again.
\end{proof}

\begin{figure}[htbp]
\centering
\subfigure[$r=5$]{
\begin{minipage}[t]{8.5cm}
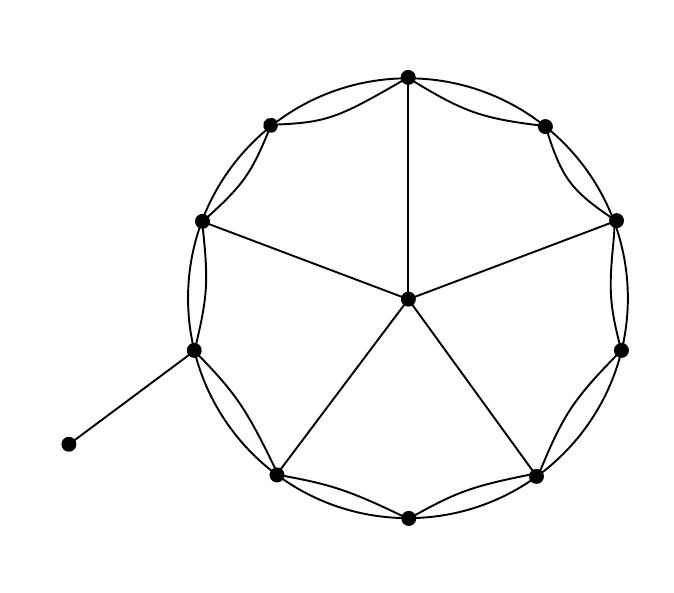
\end{minipage}
}
\subfigure[$r=4$]{
\begin{minipage}[t]{6.5cm}
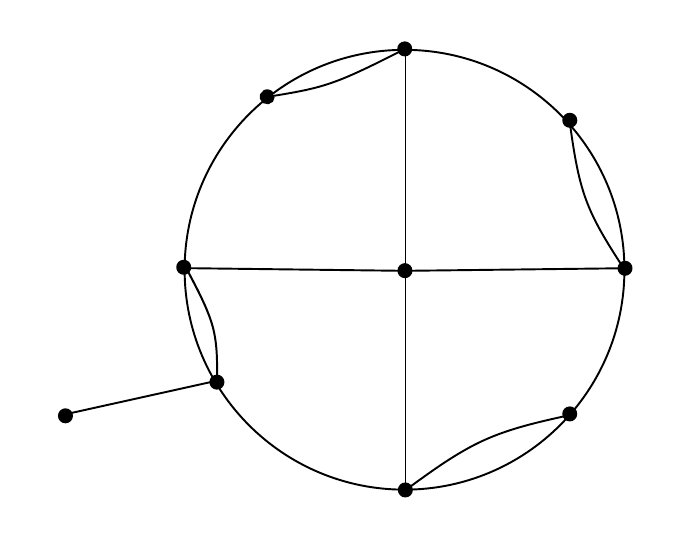
\end{minipage}
}
\caption{Two examples for the graph $H$ obtained from $C_{2r}$ as in the proof of Theorem \ref{eq-st1}.}
\label{fig1}
\end{figure}

\begin{figure}[htbp]
\centering
\subfigure[$r=5$ ]{
\begin{minipage}[t]{8.5cm}
\input{fig2a_korr.pdf_tex}
\end{minipage}
}
\subfigure[$r=4$]{
\begin{minipage}[t]{6.5cm}
\input{fig2b_korr.pdf_tex}
\end{minipage}
}
\caption{Two examples for the graph $H'$ obtained from $G$  and $H$ as in the proof of Theorem \ref{eq-st1}.}
\label{fig2}
\end{figure}

\begin{figure}[htbp]
\centering
\subfigure[$r=5$]{
\begin{minipage}[t]{7.5cm}
\centering
\begingroup%
  \makeatletter%
  \providecommand\color[2][]{%
    \errmessage{(Inkscape) Color is used for the text in Inkscape, but the package 'color.sty' is not loaded}%
    \renewcommand\color[2][]{}%
  }%
  \providecommand\transparent[1]{%
    \errmessage{(Inkscape) Transparency is used (non-zero) for the text in Inkscape, but the package 'transparent.sty' is not loaded}%
    \renewcommand\transparent[1]{}%
  }%
  \providecommand\rotatebox[2]{#2}%
  \newcommand*\fsize{\dimexpr\f@size pt\relax}%
  \newcommand*\lineheight[1]{\fontsize{\fsize}{#1\fsize}\selectfont}%
  \ifx\svgwidth\undefined%
    \setlength{\unitlength}{143.90827673bp}%
    \ifx\svgscale\undefined%
      \relax%
    \else%
      \setlength{\unitlength}{\unitlength * \real{\svgscale}}%
    \fi%
  \else%
    \setlength{\unitlength}{\svgwidth}%
  \fi%
  \global\let\svgwidth\undefined%
  \global\let\svgscale\undefined%
  \makeatother%
  \begin{picture}(1,1.1894751)%
    \lineheight{1}%
    \setlength\tabcolsep{0pt}%
    \put(0,0){\includegraphics[width=\unitlength,page=1]{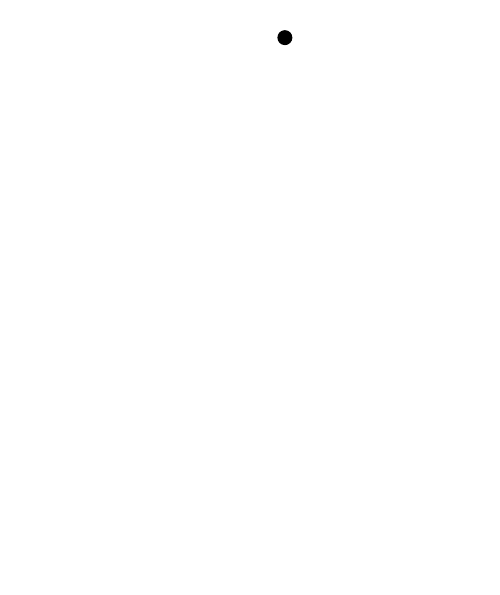}}%
    \put(0.65582655,1.147239){\color[rgb]{0,0,0}\makebox(0,0)[rt]{\lineheight{0.3125}\smash{\begin{tabular}[t]{r}$u_1$\end{tabular}}}}%
    \put(0.21718692,0.57387442){\color[rgb]{0,0,0}\makebox(0,0)[rt]{\lineheight{0.3125}\smash{\begin{tabular}[t]{r}$u_2$\end{tabular}}}}%
    \put(1.00464163,0.57302413){\color[rgb]{0,0,0}\makebox(0,0)[rt]{\lineheight{0.3125}\smash{\begin{tabular}[t]{r}$u_4$\end{tabular}}}}%
    \put(0.65697545,0.01311056){\color[rgb]{0,0,0}\makebox(0,0)[rt]{\lineheight{0.3125}\smash{\begin{tabular}[t]{r}$u_3$\end{tabular}}}}%
    \put(0,0){\includegraphics[width=\unitlength,page=2]{Fig3a.pdf}}%
  \end{picture}%
\endgroup%

\end{minipage}
}
\subfigure[$r=4$]{
\begin{minipage}[t]{7.5cm}
\centering
\begingroup%
  \makeatletter%
  \providecommand\color[2][]{%
    \errmessage{(Inkscape) Color is used for the text in Inkscape, but the package 'color.sty' is not loaded}%
    \renewcommand\color[2][]{}%
  }%
  \providecommand\transparent[1]{%
    \errmessage{(Inkscape) Transparency is used (non-zero) for the text in Inkscape, but the package 'transparent.sty' is not loaded}%
    \renewcommand\transparent[1]{}%
  }%
  \providecommand\rotatebox[2]{#2}%
  \newcommand*\fsize{\dimexpr\f@size pt\relax}%
  \newcommand*\lineheight[1]{\fontsize{\fsize}{#1\fsize}\selectfont}%
  \ifx\svgwidth\undefined%
    \setlength{\unitlength}{143.85217725bp}%
    \ifx\svgscale\undefined%
      \relax%
    \else%
      \setlength{\unitlength}{\unitlength * \real{\svgscale}}%
    \fi%
  \else%
    \setlength{\unitlength}{\svgwidth}%
  \fi%
  \global\let\svgwidth\undefined%
  \global\let\svgscale\undefined%
  \makeatother%
  \begin{picture}(1,1.20539746)%
    \lineheight{1}%
    \setlength\tabcolsep{0pt}%
    \put(0,0){\includegraphics[width=\unitlength,page=1]{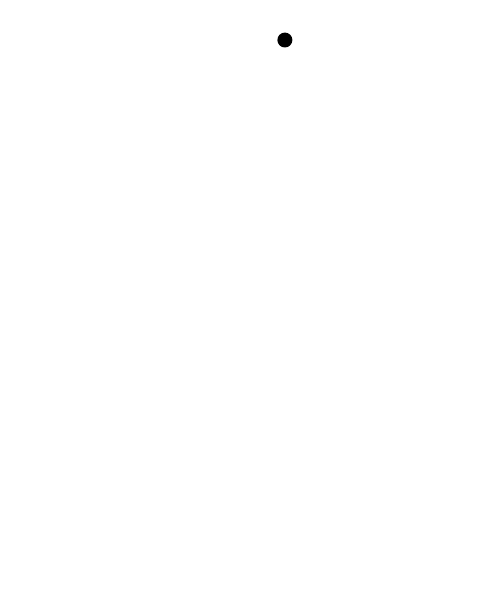}}%
    \put(0.65432304,1.16314488){\color[rgb]{0,0,0}\makebox(0,0)[rt]{\lineheight{0.3125}\smash{\begin{tabular}[t]{r}$u_1$\end{tabular}}}}%
    \put(0.21780938,0.58324065){\color[rgb]{0,0,0}\makebox(0,0)[rt]{\lineheight{0.3125}\smash{\begin{tabular}[t]{r}$u_2$\end{tabular}}}}%
    \put(1.00464344,0.58197569){\color[rgb]{0,0,0}\makebox(0,0)[rt]{\lineheight{0.3125}\smash{\begin{tabular}[t]{r}$u_4$\end{tabular}}}}%
    \put(0.65768883,0.01311568){\color[rgb]{0,0,0}\makebox(0,0)[rt]{\lineheight{0.3125}\smash{\begin{tabular}[t]{r}$u_3$\end{tabular}}}}%
    \put(0,0){\includegraphics[width=\unitlength,page=2]{Fig3b.pdf}}%
  \end{picture}%
\endgroup%

\end{minipage}
}
\caption{Two examples for the graph $H$ obtained from $K_4$ as in the proof of Theorem \ref{eq-st1}.}
\label{fig3}
\end{figure}

\begin{figure}[htbp]
\centering
\subfigure[$r=5$]{
\begin{minipage}[t]{7.5cm}
\centering
\begingroup%
  \makeatletter%
  \providecommand\color[2][]{%
    \errmessage{(Inkscape) Color is used for the text in Inkscape, but the package 'color.sty' is not loaded}%
    \renewcommand\color[2][]{}%
  }%
  \providecommand\transparent[1]{%
    \errmessage{(Inkscape) Transparency is used (non-zero) for the text in Inkscape, but the package 'transparent.sty' is not loaded}%
    \renewcommand\transparent[1]{}%
  }%
  \providecommand\rotatebox[2]{#2}%
  \newcommand*\fsize{\dimexpr\f@size pt\relax}%
  \newcommand*\lineheight[1]{\fontsize{\fsize}{#1\fsize}\selectfont}%
  \ifx\svgwidth\undefined%
    \setlength{\unitlength}{155.73688749bp}%
    \ifx\svgscale\undefined%
      \relax%
    \else%
      \setlength{\unitlength}{\unitlength * \real{\svgscale}}%
    \fi%
  \else%
    \setlength{\unitlength}{\svgwidth}%
  \fi%
  \global\let\svgwidth\undefined%
  \global\let\svgscale\undefined%
  \makeatother%
  \begin{picture}(1,1.29407035)%
    \lineheight{1}%
    \setlength\tabcolsep{0pt}%
    \put(0.70311426,1.25504219){\color[rgb]{0,0,0}\makebox(0,0)[rt]{\lineheight{0.3125}\smash{\begin{tabular}[t]{r}$G^1-v^1$\end{tabular}}}}%
    \put(0.26610845,0.62009863){\color[rgb]{0,0,0}\makebox(0,0)[rt]{\lineheight{0.3125}\smash{\begin{tabular}[t]{r}$u_2$\end{tabular}}}}%
    \put(1.00428908,0.61987534){\color[rgb]{0,0,0}\makebox(0,0)[rt]{\lineheight{0.3125}\smash{\begin{tabular}[t]{r}$u_4$\end{tabular}}}}%
    \put(0.73582661,0.00754979){\color[rgb]{0,0,0}\makebox(0,0)[rt]{\lineheight{0.3125}\smash{\begin{tabular}[t]{r}$G^3-v^3$\end{tabular}}}}%
    \put(0,0){\includegraphics[width=\unitlength,page=1]{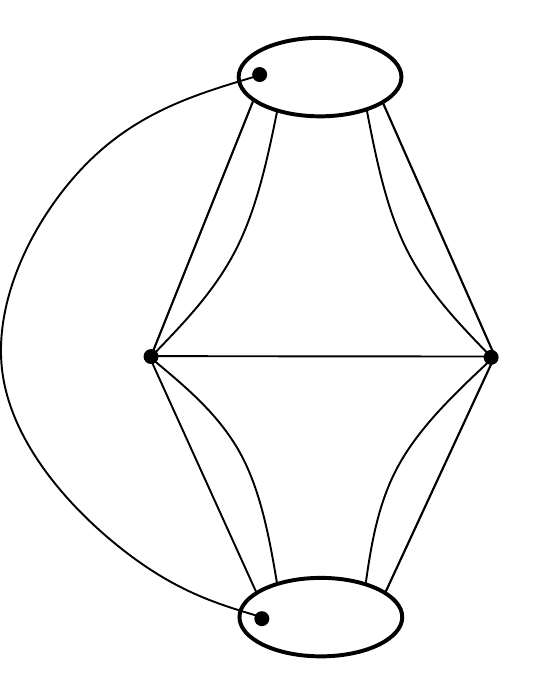}}%
    \put(0.56982231,1.14097895){\color[rgb]{0,0,0}\makebox(0,0)[rt]{\lineheight{0.3125}\smash{\begin{tabular}[t]{r}$v_1^1$\end{tabular}}}}%
    \put(0.56641286,0.13502643){\color[rgb]{0,0,0}\makebox(0,0)[rt]{\lineheight{0.3125}\smash{\begin{tabular}[t]{r}$v_1^3$\end{tabular}}}}%
  \end{picture}%
\endgroup%

\end{minipage}
}
\subfigure[$r=4$]{
\begin{minipage}[t]{7.5cm}
\centering
\begingroup%
  \makeatletter%
  \providecommand\color[2][]{%
    \errmessage{(Inkscape) Color is used for the text in Inkscape, but the package 'color.sty' is not loaded}%
    \renewcommand\color[2][]{}%
  }%
  \providecommand\transparent[1]{%
    \errmessage{(Inkscape) Transparency is used (non-zero) for the text in Inkscape, but the package 'transparent.sty' is not loaded}%
    \renewcommand\transparent[1]{}%
  }%
  \providecommand\rotatebox[2]{#2}%
  \newcommand*\fsize{\dimexpr\f@size pt\relax}%
  \newcommand*\lineheight[1]{\fontsize{\fsize}{#1\fsize}\selectfont}%
  \ifx\svgwidth\undefined%
    \setlength{\unitlength}{155.08279006bp}%
    \ifx\svgscale\undefined%
      \relax%
    \else%
      \setlength{\unitlength}{\unitlength * \real{\svgscale}}%
    \fi%
  \else%
    \setlength{\unitlength}{\svgwidth}%
  \fi%
  \global\let\svgwidth\undefined%
  \global\let\svgscale\undefined%
  \makeatother%
  \begin{picture}(1,1.27843904)%
    \lineheight{1}%
    \setlength\tabcolsep{0pt}%
    \put(0.70735258,1.23924627){\color[rgb]{0,0,0}\makebox(0,0)[rt]{\lineheight{0.3125}\smash{\begin{tabular}[t]{r}$G^1-v^1$\end{tabular}}}}%
    \put(0.26301296,0.61111497){\color[rgb]{0,0,0}\makebox(0,0)[rt]{\lineheight{0.3125}\smash{\begin{tabular}[t]{r}$u_2$\end{tabular}}}}%
    \put(1.00430717,0.60667258){\color[rgb]{0,0,0}\makebox(0,0)[rt]{\lineheight{0.3125}\smash{\begin{tabular}[t]{r}$u_4$\end{tabular}}}}%
    \put(0.72296746,0.00758164){\color[rgb]{0,0,0}\makebox(0,0)[rt]{\lineheight{0.3125}\smash{\begin{tabular}[t]{r}$G^3-v^3$\end{tabular}}}}%
    \put(0,0){\includegraphics[width=\unitlength,page=1]{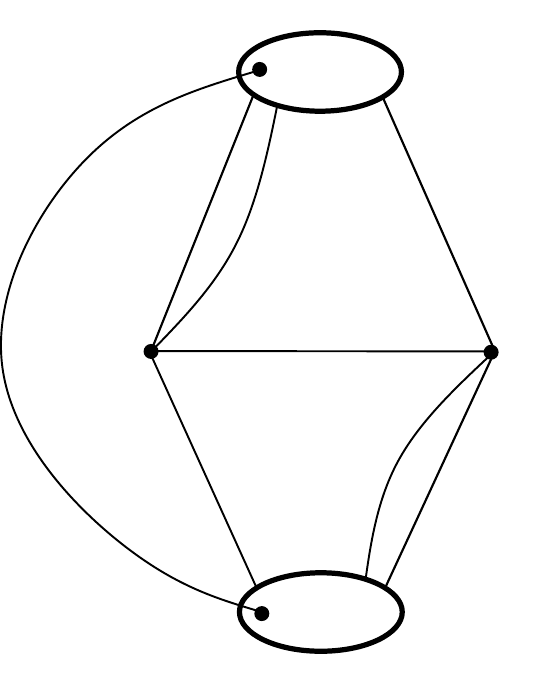}}%
    \put(0.56800766,1.13419249){\color[rgb]{0,0,0}\makebox(0,0)[rt]{\lineheight{0.3125}\smash{\begin{tabular}[t]{r}$v_1^1$\end{tabular}}}}%
    \put(0.56669291,0.11977899){\color[rgb]{0,0,0}\makebox(0,0)[rt]{\lineheight{0.3125}\smash{\begin{tabular}[t]{r}$v_1^3$\end{tabular}}}}%
  \end{picture}%
\endgroup%

\end{minipage}
}
\caption{Two examples for the graph $H'$ obtained from $G$  and $H$ as in the proof of Theorem \ref{eq-st1}.}
\label{fig4}
\end{figure}
 For the special case $k=r-2$, we can obtain a stronger result as follows.

\begin{theo}\label{eq-st3}
	Let $k\geq1$. The following statements are equivalent.\\
	(i) Every  $(2k+1)$-edge-connected $(2k+1)$-graph has a $(2k-1)$-PDPM.\\
	(ii) For every $(2k+1)$-edge-connected $(2k+1)$-graph $G$ and every $k$ edges sharing a common vertex, there exists a $(2k-1)$-PDPM of $G$ containing this $k$ edges.
\end{theo}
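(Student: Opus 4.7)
The direction (ii)$\Rightarrow$(i) is immediate. For (i)$\Rightarrow$(ii), the plan is to arrange $2k+1$ disjoint copies of $G$ around a complete bipartite hub into a single graph $H'$ to which (i) applies, and then to force, via a parity/pigeonhole count on the two unused edges per vertex, at least one copy to admit a $(2k-1)$-PDPM containing all $k$ specified edges.

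Let $G$ be a $(2k+1)$-edge-connected $(2k+1)$-graph and let $e_1=vv_1,\ldots,e_k=vv_k$ be the $k$ specified edges at $v$; enumerate the remaining edges at $v$ as $e_{k+1},\ldots,e_{2k+1}$. Take pairwise disjoint copies $G^1,\ldots,G^{2k+1}$ of $G$, so that each $G^i$ has a copy $v^i$ of $v$ with edges $e^i_j=v^iv^i_j$ for $j=1,\ldots,2k+1$. Introduce $2k+1$ fresh vertices $u^{(1)},\ldots,u^{(2k+1)}$, delete each $v^i$, and for every $(i,j)\in\{1,\ldots,2k+1\}^2$ add an edge between $v^i_j$ and $u^{(j)}$. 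Call this graph $H'$; equivalently, $H'$ is obtained from $K_{2k+1,2k+1}$ (with bipartition $\{a_1,\ldots,a_{2k+1}\}\cup\{u^{(1)},\ldots,u^{(2k+1)}\}$) by substituting $(G^i,v^i)$ for each $a_i$ in the sense of Definition~\ref{Replace-H}. Since $K_{2k+1,2k+1}$ is $(2k+1)$-regular and $(2k+1)$-edge-connected, iterating Lemma~\ref{GU-rcon} over these $2k+1$ substitutions shows that $H'$ is $(2k+1)$-regular and $(2k+1)$-edge-connected; together with $|V(H')|=(2k+1)|V(G)|$ being even, $H'$ is a $(2k+1)$-graph, and (i) yields a $(2k-1)$-PDPM $\mathcal{N}=\{M_1,\ldots,M_{2k-1}\}$ of $H'$.

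The core of the argument is an odd-cut count. For every $i$, the cut $\partial_{H'}(V(G^i)\setminus\{v^i\})$ is exactly the set of $2k+1$ crossings $\{v^i_ju^{(j)}:j=1,\ldots,2k+1\}$, and Observation~\ref{odd-insect} forces each $M_\ell$ to meet it in an odd number of edges. On the other hand, every $u^{(j)}$ is matched exactly once by $M_\ell$, so $M_\ell$ contains exactly $2k+1$ crossings in total, split over the $2k+1$ copies; expressing $2k+1$ as a sum of $2k+1$ positive odd integers forces every summand to equal $1$. Hence every $M_\ell$ uses a single crossing at each copy, so $\mathcal{N}$ descends to a $(2k-1)$-PDPM $\mathcal{N}^i$ of each $G^i\cong G$ in which the edge at $v^i$ used by the $\ell$-th matching is $e^i_{j(\ell,i)}$ for the unique $j(\ell,i)$ with $v^i_{j(\ell,i)}u^{(j(\ell,i))}\in M_\ell$.

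Finally, let $s_i$ be the number of specified edges at $v^i$ unused in $\mathcal{N}^i$. Each $u^{(j)}$ has exactly two of its $2k+1$ incident crossings missing from $\mathcal{N}$, so for each specified $j\in\{1,\ldots,k\}$ the edge $e^i_j$ is unused in $\mathcal{N}^i$ for precisely two indices $i$; summing over these $k$ indices yields $\sum_{i=1}^{2k+1} s_i = 2k$. Since each $s_i\in\{0,1,2\}$ and the sum over $2k+1$ nonnegative terms equals only $2k$, there is an $i^*$ with $s_{i^*}=0$, and $\mathcal{N}^{i^*}$ is a $(2k-1)$-PDPM of $G^{i^*}\cong G$ containing $e_1,\ldots,e_k$, as required. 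The main technical point requiring care is the $(2k+1)$-edge-connectivity of $H'$, obtained by iterated application of Lemma~\ref{GU-rcon}; the remainder is the parity/pigeonhole count just described.
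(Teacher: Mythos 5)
Your proof is correct, but it takes a genuinely different route from the paper's. The paper uses only two copies: it replaces the vertex $v$ of $G$ by $(G^1,v^1)$ so that the $2k+1$ edges of the resulting cut split as $\{v_{2k+1}v_{2k+1}^1\}\cup E_1\cup E_2$ with $E_1=\{v_iv_{i+k}^1\}$ and $E_2=\{v_i^1v_{i+k}\}$ for $i\in\{1,\dots,k\}$, then invokes statement $(iii)$ of Theorem \ref{eq-st1} to obtain a $(2k-1)$-PDPM avoiding the single edge $v_{2k+1}v_{2k+1}^1$; since each of the $2k-1$ matchings meets the cut exactly once, only one of the remaining $2k$ cut edges is unused, so all of $E_1$ or all of $E_2$ is used, which translates into the required PDPM in $G$ or in $G^1$. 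You instead arrange $2k+1$ copies around a $K_{2k+1,2k+1}$ hub and apply statement $(i)$ directly, replacing the appeal to Theorem \ref{eq-st1} by a double count: each hub vertex $u^{(j)}$ misses exactly two of its incident crossings, so the $k$ specified edges are missed $2k$ times over $2k+1$ copies, and pigeonhole produces a clean copy. The paper's construction is smaller and reuses machinery already established; yours is self-contained modulo Lemma \ref{GU-rcon} and Observation \ref{odd-insect}, at the cost of a larger auxiliary graph. The key verifications in your write-up --- the $(2k+1)$-edge-connectivity of $K_{2k+1,2k+1}$ feeding the iterated application of Lemma \ref{GU-rcon}, the forcing of exactly one crossing per copy per matching, and the disjointness of the restricted matchings --- are all sound.
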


\begin{proof}
It suffices to prove that statement $(i)$ implies statement $(ii)$. Let $G$ be a $(2k+1)$-edge-connected $(2k+1)$-graph and let $v\in V(G)$ be a vertex with $\partial_G(v)=\{e_i:i \in \{1,\ldots,2k+1\}\} $. We show that there is a $(2k-1)$-PDPM of $G$ that contains the edges  $e_1,\ldots,e_{k}$.

	Denote $e_i=vv_i$ for each $i \in \{1,\ldots,2k+1\}$. Let $G^1$ be a copy of $G$ in which the vertices and edges are labeled accordingly by using an upper index. As described in Definition \ref{Replace-H}, construct a new graph $H$ from $G$ by replacing $v$ with $(G^1, v^1)$ such that the set of new edges is given by $\{v_{2k+1}v_{2k+1}^1\} \cup E_1 \cup E_2$, where $E_1=\{v_iv_{i+k}^1 : i \in \{1,\ldots,k\}\}$ and $E_2= \{v_i^1v_{i+k} : i \in \{1,\ldots,k\}\}$. By Lemma \ref{GU-rcon}, $H$ is $(2k+1)$-edge-connected and $(2k+1)$-regular. Thus, by statement $(i)$ and Theorem \ref{eq-st1} there is a $(2k-1)$-PDPM $\ca N$ of $H$ avoiding $v_{2k+1}v_{2k+1}^1$. By Observation \ref{odd-insect}, every perfect matching of $\ca N$ contains exactly one edge of $\partial_H(V(G) \setminus \{v\})$ and hence, $\ca N$ contains either every edge of $E_1$ or every edge of $E_2$. In the first case, $G$ has a $(2k-1)$-PDPM that contains $e_1,\ldots,e_{k}$; in the second case, $G^1$ has a $(2k-1)$-PDPM that contains $e_1^1,\ldots,e_{k}^1$. This proves statement $(ii)$.
\end{proof}

\section{$5$-graphs}\label{sec:disjoint_pm_5_reg_graphs}

We explore some consequences of the non-existence of $5$-edge-connected class $2$ $5$-graphs. The edge-connectivity may play a crucial role in this case as, by a result of Rizzi \cite{rizzi1999indecomposable}, there are poorly matchable $4$-edge-connected $5$-graphs.

Let $G$ be a cubic graph and let $\ca F = \{F_1,\dots, F_t\}$ be a multiset of subsets $F_i$ of $E(G)$. For an edge $e$ of $G$, we denote by $\nu_{\ca F}(e)$ the number of elements of $\ca F$ containing $e$. A \emph{Fan-Raspaud triple}, or $FR$-\emph{triple}, is a multiset $\ca T$ of three perfect matchings of $G$ such that $\nu_{\ca T}(e)\le 2$ for all $e\in E(G)$. A $5$-\emph{cycle double cover}, or $5$-CDC, is a multiset $\mathcal{C}$ of five cycles in $G$ such that, for every edge $e\in E(G)$, $\nu_{\mathcal{C}}(e)=2$. A \emph{Berge-Fulkerson cover}, or $BF$\emph{-cover}, is a multiset $\ca T$ of six perfect matchings of $G$ such that $\nu_{\ca T}(e) = 2$ for all $e\in E(G)$. We recall the following three well-known conjectures and a result from \cite{Kaiser_2-factors}.

\begin{con}[Fan-Raspaud Conjecture \cite{FAN1994133}]\label{FRC}
Every bridgeless cubic graph has an $FR$-triple.
\end{con}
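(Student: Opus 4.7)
The final statement is the Fan-Raspaud Conjecture, which remains open in general. My proof plan follows the strategy that this paper is clearly setting up, namely attacking FRC via highly edge-connected regular graphs.

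A preliminary reduction: if the bridgeless cubic graph $G$ is $3$-edge-colourable, then its three colour classes $\{M_1,M_2,M_3\}$ trivially form an $FR$-triple, since $\nu(e)=1$ for every edge $e$. So one may assume that $G$ is a bridgeless class~$2$ cubic graph, i.e.\ a snark.

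The main plan is to invoke Theorem~\ref{FR-m5}: it suffices to prove $m(5)\geq 2$, that is, every $5$-edge-connected $5$-graph admits two disjoint perfect matchings. Given a snark $G$, one would construct a $5$-edge-connected $5$-graph $G^*$ encoding $G$, in such a way that any $2$-PDPM of $G^*$ projects back --- via parity arguments on small odd vertex sets inside the replacement gadget, in the spirit of Observation~\ref{odd-insect} --- to three perfect matchings of $G$ satisfying $\nu(e)\leq 2$ everywhere. The Petersen graph is the natural building block for such a gadget, given its prominent role throughout the paper (for instance in Lemma~\ref{lem:P+matchings_Connectivity}).

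The genuine obstacle will be the step $m(5)\geq 2$ itself. By Theorem~\ref{eq-st1}, this is equivalent to showing that for every $5$-edge-connected $5$-graph $G$ and every edge $e$ of $G$ there is a perfect matching of $G$ avoiding $e$. This is an open question stated in this paper; it is tightly linked to Problem~\ref{prob:5_reg_class_2} on the (un)known non-existence of $5$-edge-connected class~$2$ $5$-graphs, for which neither a construction nor an impossibility proof is currently available. Progress here would most likely come either from a structural classification ruling out such class~$2$ graphs --- plausibly via tight cuts, barriers, or the Edmonds--Gallai decomposition adapted to the $5$-regular setting --- or from a direct matching-theoretic argument for $5$-graphs. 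Failing an unconditional resolution, a natural intermediate target is to prove FRC for cyclically $k$-edge-connected cubic graphs with $k$ large, where the associated $G^*$ inherits stronger connectivity and the gadget reduction becomes more tractable.
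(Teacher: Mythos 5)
There is a genuine gap here, and it is the whole argument: what you have written is a research programme, not a proof. The statement is presented in the paper as a \emph{conjecture} (Conjecture~\ref{FRC}); the paper offers no proof of it and only establishes the conditional implication that $m(5)\geq 2$ implies the Fan--Raspaud Conjecture (Theorem~\ref{FR-m5}, via Theorem~\ref{FR_implication} and the equivalence with Conjecture~\ref{strongFRC}). Your plan reproduces exactly that reduction and then correctly observes that the remaining step, $m(5)\geq 2$, is open. Since that step is never carried out, nothing is proved. To be clear, you cannot be faulted for failing to settle an open conjecture, but the proposal should not be mistaken for a proof: every substantive ingredient (the gadget construction, the projection of a $2$-PDPM of $G^*$ back to an $FR$-triple of $G$) is either already in the paper or deferred to the unsolved problem.

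One concrete error worth flagging: you claim that, by Theorem~\ref{eq-st1}, $m(5)\geq 2$ is equivalent to the assertion that every $5$-edge-connected $5$-graph has, for every edge $e$, \emph{a perfect matching} avoiding $e$. That latter assertion is trivially true (every $r$-graph has a perfect matching through any prescribed edge, so a perfect matching through an edge adjacent to $e$ avoids $e$) and certainly does not imply $m(5)\geq 2$. The actual content of statement $(iii)$ of Theorem~\ref{eq-st1} with $k=2$ is that every such graph has a $2$-PDPM avoiding $e$, i.e.\ two \emph{disjoint} perfect matchings neither of which uses $e$; dropping the disjointness collapses the statement to a triviality. If you pursue this route, the correct target is the $2$-PDPM version.
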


\begin{con}[$5$-cycle double cover Conjecture, see \cite{C.-Q._Zhang_book}]\label{5CDC}
Every bridgeless cubic graph has a $5$-cycle double cover.
\end{con}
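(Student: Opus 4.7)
My plan is to derive both implications via a common auxiliary construction. By the standard reductions for each of the three cubic conjectures involved, I may assume the given bridgeless cubic graph $G$ is cyclically $4$-edge-connected. Given such $G$, I set $H = H(G) := G^1 \cup G^2 + 2M^*$, where $G^1, G^2$ are disjoint copies of $G$ and $M^* = \{v^1 v^2 : v \in V(G)\}$, so that two parallel edges are added between $v^1$ and $v^2$ for each $v$. A short cut-counting argument using the cyclic $4$-edge-connectivity of $G$ shows $H$ is a $5$-edge-connected, $5$-regular graph of even order, hence a $5$-graph.

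For the first implication, the hypothesis $m(5) \geq 2$ gives a $2$-PDPM $\{N_1, N_2\}$ of $H$. Let $S_i = \{v : N_i \text{ uses a parallel edge at } v\}$ and set $P_i^j := \pi(N_i \cap E(G^j))$ for $i, j \in \{1, 2\}$, where $\pi$ is the natural projection $G^j \to G$. Each $P_i^j$ is a matching of $G$ covering exactly $V(G) \setminus S_i$. I would extend three of these four near-matchings (say $P_1^1, P_2^2, P_1^2$) to perfect matchings $M_1, M_2, M_3$ of $G$ using edges of the $3$-regular leftover subgraph $F = E(H) \setminus (N_1 \cup N_2)$; a case analysis at vertices of $S_1 \cup S_2$ would show that no edge of $G$ lies in all three $M_\ell$, yielding the desired Fan-Raspaud triple. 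The disjointness of $N_1, N_2$ is what prevents a triple intersection: an edge $e$ lying in the ``rigid parts'' $P_1^1, P_2^2, P_1^2$ simultaneously would force $e^1 \in N_1 \cap N_2$, a contradiction.

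For the second implication, the hypothesis $m(5) = 5$ makes $H$ class $1$; let $\{N_1, \dots, N_5\}$ be a $5$-edge-coloring. For each $e \in E(G)$ define the label $L(e) := \{k : e^1 \in N_k\} \cup \{k : e^2 \in N_k\}$. At each vertex $v$ of $G$, a counting argument shows that the two parallel edges at $v$ lie in two distinct matchings $T_v \subseteq \{1, \dots, 5\}$ of size $2$, and the remaining three matchings yield two bijections from $\{1, \dots, 5\} \setminus T_v$ to the three $G$-edges at $v$ whose composition is a local permutation $\pi_v \in S_3$. Precisely when $\pi_v$ is a $3$-cycle, the labels $L(e_a), L(e_b), L(e_c)$ at $v$ form the triangle of $2$-subsets of $\{1, \dots, 5\} \setminus T_v$ in $K_5$. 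Granting via a Kempe-chain switching argument that the $5$-edge-coloring of $H$ can be chosen with $\pi_v$ a $3$-cycle at every $v$, setting $C_k := \{e \in E(G) : k \in L(e)\}$ yields the $5$-CDC (every vertex has degree $0$ or $2$ in $C_k$; $|L(e)|=2$ gives the double cover). For the Berge-Fulkerson cover, I would identify $\binom{\{1, \dots, 5\}}{2}$ with $V(P)$, the vertex set of the Petersen graph, so that $e \mapsto L(e)$ is a map $E(G) \to V(P)$ sending adjacent $G$-edges to non-adjacent $P$-vertices; using the fact that the $6$ perfect matchings of $P$ form a Berge-Fulkerson cover of $P$ itself (each edge of $P$ in exactly two), a vertex-local selection rule keyed by the Petersen $1$-factorization translates these into $6$ perfect matchings of $G$ covering every edge exactly twice.

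The main obstacle is the Kempe-chain argument that guarantees a global $5$-edge-coloring of $H$ with $\pi_v$ a $3$-cycle at every vertex: local switches at one ``bad'' vertex can create new bad vertices elsewhere, so convergence of an iterative procedure must be proven via a global monotone invariant. In Part 1, the parallel technical challenge is the case analysis at $S_1 \cup S_2$ confirming that the extensions of $P_1^1, P_2^2, P_1^2$ to perfect matchings $M_1, M_2, M_3$ of $G$ indeed yield empty triple intersection.
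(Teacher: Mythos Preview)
The statement you were asked to prove is Conjecture~\ref{5CDC}, the $5$-Cycle Double Cover Conjecture. This is a famous open problem; the paper does not prove it and contains no proof to compare against. What the paper \emph{does} prove is the conditional Theorem~\ref{FR-m5}: if $m(5)\ge 2$ then the Fan--Raspaud Conjecture holds, and if $m(5)=5$ then both the $5$-CDC Conjecture and the Berge--Fulkerson Conjecture hold. Your proposal is visibly an attempt at that theorem, not at the conjecture itself, so there is a basic mismatch between the target statement and what you wrote.

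Setting that aside and reading your proposal as an attempt at Theorem~\ref{FR-m5}, the approach is genuinely different from the paper's but has real gaps. For the Fan--Raspaud part, the paper does not build a two-copy graph: it takes a $2$-factor $F$ of $G$ meeting every $3$- and $4$-edge-cut (Theorem~\ref{2-factor-theo}), so that $G+E(F)$ is $5$-edge-connected, pulls a $2$-PDPM $\{N_1,N_2\}$ from the hypothesis, and sets $N_3=E(G)\setminus E(F)$; any edge in $N_1'\cap N_2'$ must come from the doubled $F$-edges, hence misses $N_3$. In your construction the step ``extend $P_i^j$ to a perfect matching of $G$'' is the problem: the uncovered set $S_i$ need not induce a subgraph with a perfect matching, and you give no mechanism to guarantee the extensions exist, let alone that their triple intersection is empty.

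For the $5$-CDC and Berge--Fulkerson parts, the paper again avoids any Kempe-chain or Petersen-labeling machinery. For $5$-CDC it doubles all edges of $G$ to get a $6$-edge-connected $2G$, replaces each vertex by a fixed $5$-regular gadget (a doubled $4$-wheel) to obtain a $5$-edge-connected $5$-graph, and reads a $5$-CDC directly off a $5$-PDPM. For Berge--Fulkerson it simply adds a $2$-factor to $G$ and uses the resulting $5$-PDPM together with the complementary perfect matching. Your route hinges on arranging a $5$-edge-coloring of your $H$ with $\pi_v$ a $3$-cycle at every vertex; you correctly flag this global Kempe-switching step as the main obstacle, and indeed nothing in your outline shows it can be achieved. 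Without it, neither $|L(e)|=2$ nor the local triangle structure is guaranteed, and the argument does not close.
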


\begin{con}[Berge-Fulkerson Conjecture \cite{fulkerson1971blocking}]\label{BFC}
Every bridgeless cubic graph has a $BF$-cover.
\end{con}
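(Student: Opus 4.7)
The plan is to leverage Theorem \ref{FR-m5}, which already reduces the Berge-Fulkerson Conjecture to proving $m(5)=5$, i.e.\ that every $5$-edge-connected $5$-regular graph is class $1$; equivalently, that the answer to Problem \ref{prob:5_reg_class_2} is ``no''. Thus the first step is to establish $m(5)=5$, after which Theorem \ref{FR-m5} immediately delivers a $BF$-cover for every bridgeless cubic graph.

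To attack $m(5)=5$ one must show directly that no $5$-edge-connected $5$-regular class $2$ graph exists. Observation \ref{rr-3conn} already guarantees that such a graph would be $3$-connected, which rules out the construction template of Section \ref{Sec: m(4t+2)} that relies on $2$-vertex-cuts. My plan would therefore be to assume a hypothetical minimal counterexample $G_0$ (a $5$-edge-connected $5$-regular class $2$ graph of minimum order), analyse its small edge-cuts and vertex-cuts, and perform reductions that either shrink $G_0$ (contradicting minimality) or force $G_0$ to contain the Petersen graph plus a controlled set of perfect matchings. Such a configuration can then be attacked with Lemmas \ref{lem:P+matchings} and \ref{lem:P+matchings_Connectivity} to derive a contradiction with the class $2$ assumption. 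Note that the equivalences of Section \ref{sec:Equivalences} only cover the range $k\le r-2$, so they do not immediately localise $m(5)=5$; however, they can be used as intermediate stepping stones via $m(5)\ge 3$, together with a promotion argument that extends a $3$-PDPM to a full $1$-factorisation using parity constraints from Observation \ref{odd-insect}.

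The main obstacle is that Problem \ref{prob:5_reg_class_2} itself is explicitly flagged in the paper as open even after computer-assisted graph-database searches and an extensive literature review: no $5$-edge-connected $5$-regular class $2$ graph is known, but neither is there a proof that none exists. Consequently, any honest execution of this plan must develop genuinely new structural machinery for $3$-connected $5$-graphs that goes beyond the Petersen-graph calculus of Section \ref{Sec: m(4t+2)} and beyond the local perfect-matching extension results of Section \ref{sec:Equivalences}. This is the crux of the difficulty: the Berge-Fulkerson Conjecture has been open since 1971, and through the reduction via Theorem \ref{FR-m5} my plan inherits at minimum the full difficulty of Problem \ref{prob:5_reg_class_2}; I should expect the proof attempt to stall precisely there, with the structural analysis of a minimal class $2$ counterexample, unless a completely new invariant for $3$-connected $5$-graphs is introduced.
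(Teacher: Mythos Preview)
The statement you are trying to prove is \emph{not} proved in the paper: it is Conjecture~\ref{BFC}, the Berge--Fulkerson Conjecture, stated as an open problem. The paper only shows the conditional implication (Observation~\ref{BF_implication}, subsumed in Theorem~\ref{FR-m5}) that $m(5)=5$ would imply Conjecture~\ref{BFC}. There is therefore no ``paper's own proof'' to compare your proposal against.

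Your plan is internally consistent in that it correctly follows the conditional chain: establish $m(5)=5$, then invoke Theorem~\ref{FR-m5}. But, as you yourself note, the first step is exactly Problem~\ref{prob:5_reg_class_2}, which the paper explicitly leaves open after literature and computer searches. Everything substantive in your outline --- the minimal-counterexample analysis, the appeal to Lemmas~\ref{lem:P+matchings} and~\ref{lem:P+matchings_Connectivity}, the ``promotion'' of a $3$-PDPM to a $5$-PDPM via parity --- is speculative scaffolding with no actual argument supplied. In particular, the promotion step has no basis: Theorem~\ref{eq-st1} gives equivalences only for $k\le r-2=3$, and nothing in the paper (or elsewhere) lets you upgrade a $3$-PDPM of a $5$-graph to a $1$-factorisation. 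The Petersen-based lemmas produce class~$2$ graphs, not obstructions to their existence, so they cannot yield the contradiction you sketch. In short, your proposal is not a proof but a restatement of the open problem together with an honest acknowledgement that it will stall there; that acknowledgement is correct.
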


\begin{theo}[Kaiser and \v Skrekovski \cite{Kaiser_2-factors}]\label{2-factor-theo}
	Every bridgeless cubic graph has a $2$-factor that intersects every edge-cut of cardinality $3$ and $4$. Moreover, any two adjacent edges can be extended to such a $2$-factor.
\end{theo}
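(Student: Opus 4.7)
The plan is to produce a perfect matching $M$ of $G$ whose complement $F := E(G)\setminus M$, which is a 2-factor, meets every 3-edge-cut and every 4-edge-cut. Using the parity argument behind Observation~\ref{odd-insect} applied to each side of a cut, one has $|M\cap C| \in \{1,3\}$ for every 3-edge-cut $C$ and $|M\cap C| \in \{0,2,4\}$ for every 4-edge-cut $C$; thus ``$F$ meets $C$'' translates to $|M\cap C| = 1$ for 3-cuts and $|M\cap C| \neq 4$ for 4-cuts. For the ``moreover'' part, if $e_1,e_2$ are the prescribed adjacent edges sharing a vertex $v$ and $e_3$ is the remaining edge at $v$, it suffices to additionally require $e_3\in M$.

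I would argue by induction on $|V(G)|$, peeling off nontrivial small edge-cuts. Suppose $G$ has a nontrivial 3-edge-cut $[A,B]$ (both sides contain a circuit). Contracting $B$ to a single vertex yields a smaller bridgeless cubic graph $G_A$, and analogously one defines $G_B$. The inductive hypothesis gives valid matchings $M_A$ of $G_A$ and $M_B$ of $G_B$, each using exactly one cut edge. To make them agree on the common cut I would exploit the ``extension'' clause of the inductive hypothesis at the contracted vertex: prescribing the two 2-factor edges there fixes the cut edge used by the matching, so both $M_A$ and $M_B$ can be forced to use the same cut edge. A nontrivial 4-edge-cut $[A,B]$ (both sides containing a circuit) is handled analogously by adding auxiliary edges on each side that pair up the four cut-vertices, thereby producing cubic bridgeless graphs $G_A,G_B$ on which induction applies.

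In the base case $G$ is cyclically 5-edge-connected, and there the structural picture is clean. Every 3-edge-cut is trivial, of the form $\partial_G(v)$, and is automatically intersected in exactly one edge by any perfect matching. For every 4-edge-cut $[A,B]$, one side must be acyclic; writing $c$ for its number of tree components, the identity $|A|+2c=4$ combined with $|A|\ge c \ge 1$ forces $A=\{u,u'\}$ with $uu'\in E(G)$. For such a cut, any perfect matching either contains $uu'$ (giving $|M\cap[A,B]|=0$) or matches $u,u'$ externally (giving $|M\cap[A,B]|=2$), and the bad value $4$ never occurs. Hence in the base case it only remains to find a perfect matching containing the prescribed edge $e_3$, which is the classical consequence of Petersen's theorem that every edge of a bridgeless cubic graph lies in some perfect matching.

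The main obstacle is the 4-edge-cut inductive step. One must select auxiliary pairings of the four cut-vertices on each side so that (i) $G_A$ and $G_B$ are bridgeless cubic graphs to which induction really applies, (ii) the valid matchings obtained on the two sides glue to a perfect matching of $G$ that uses $0$ or $2$ cut edges (not $4$) of the original cut, and (iii) the prescribed edge $e_3$ is carried correctly to whichever side contains it. Guaranteeing that these local choices never become globally incompatible across the cut is precisely the combinatorial heart of the Kaiser-\v Skrekovski argument; a complete proof requires a careful case split on the structure of $G$ around the cut and on the position of $e_3$ relative to the cut vertices.
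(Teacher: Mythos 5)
This statement is not proved in the paper at all: it is quoted as a known theorem of Kaiser and \v Skrekovski \cite{Kaiser_2-factors}, so there is no in-paper argument to measure yours against, and your proposal has to stand on its own. Its correct parts are the translation to perfect matchings and the base case. In a cyclically $5$-edge-connected cubic graph your count $|A|+2c=4$ does force every $4$-edge-cut to be $\partial(\{u,u'\})$ for an edge $uu'$, every $3$-edge-cut is $\partial(v)$, hence the complement of \emph{any} perfect matching meets all such cuts, and the ``moreover'' clause reduces to finding a perfect matching through one prescribed edge, which is classical. That part is clean.

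Everything of substance, however, has been pushed into the inductive step, and two genuine gaps remain there. First, even in the $3$-cut step: after contracting one side of a nontrivial $3$-cut $[A,B]$, the inductive hypothesis only controls $3$- and $4$-cuts of $G$ that lie entirely inside $G_A$ or $G_B$; a small cut of $G$ that \emph{crosses} $[A,B]$ is invisible to both pieces. You need an uncrossing/submodularity argument, or a careful choice of an innermost nontrivial cut, and you supply neither. Second, the $4$-cut step, which you explicitly leave open, is the actual content of the theorem. A perfect matching meets a $4$-cut in $0$, $2$ or $4$ edges, so gluing the two sides requires them to agree not merely on one cut edge but on the entire trace of the matching on the cut (the empty set or the same pair of edges), while the auxiliary pairings keep both pieces bridgeless and the prescribed edges are carried to the right side. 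The ``moreover'' clause of the inductive hypothesis only lets you force a single edge into the matching at a degree-$3$ vertex, which is not enough leverage to coordinate four cut edges, and you acknowledge as much. As written this is a correct frame with an honest admission that the hard step is missing; it is an outline, not a proof, and the published argument of Kaiser and \v Skrekovski needs considerably more machinery precisely to get past this point.
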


As shown in \cite{MKRTCHYAN2019}, the following conjecture is equivalent to the Fan-Raspaud Conjecture.

\begin{con}[Mkrtchyan and Vardanyan \cite{MKRTCHYAN2019}]\label{strongFRC}
Let $G$ be a bridgeless cubic graph. For every $e \in E(G)$ and $i \in \{0,1,2\}$, 
there is an $FR$-triple $\ca T$ with $\nu_{\ca T}(e)=i$.
\end{con}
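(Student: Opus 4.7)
The forward direction, namely that Conjecture \ref{strongFRC} implies Conjecture \ref{FRC}, is immediate, so I would concentrate on the reverse direction: assuming \ref{FRC}, deduce that for every bridgeless cubic graph $G$, every $e \in E(G)$, and every $i \in \{0,1,2\}$ there exists an FR-triple $\ca T$ of $G$ with $\nu_{\ca T}(e) = i$.

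My plan is a gadget-replacement argument in the spirit of the constructions in Section \ref{sec:Equivalences}. Fix $(G, e, i)$ with $e = uv$. The idea is to build a bridgeless cubic graph $G^{(i)}$ by deleting $e$ and attaching a carefully chosen Petersen-based filter gadget between $u$ and $v$, so that every FR-triple of $G^{(i)}$ restricts to an FR-triple of $G$ with $\nu(e) = i$. Applying Conjecture \ref{FRC} to $G^{(i)}$ then yields the desired FR-triple on $G$.

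A prototype gadget is obtained by removing one edge $ab$ from a copy of $P$ and joining $a$ to $u$ and $b$ to $v$ by two new edges. Since $\{ua, vb\}$ is a $2$-edge-cut separating the even-sized set $V(P-ab)$ from $V(G-e)$, a parity argument shows that every perfect matching of $G^{(i)}$ either contains both of $ua,vb$ or neither. Accordingly, its trace on $E(G-e)$, possibly enlarged by $e$ itself in the first case, is a perfect matching of $G$, and the FR-condition at $ua$ automatically yields $\nu_{\ca T}(e) \le 2$ in the restriction. To force the specific target $i$, I would augment the Petersen piece with additional copies of matchings $M_j$ as in Lemma \ref{lem:P+matchings}, chosen so that the PM structure of the augmented gadget constrains the number of "$\{ua, vb\} \subset M$" matchings in any FR-triple to be exactly $i$.

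The main obstacle will be the precise combinatorial design of these filter gadgets, particularly for $i = 1$: the FR-condition only delivers an upper bound on $\nu$, so forcing $\nu(e) = 1$ requires simultaneously ruling out $\nu(e) = 0$ and $\nu(e) = 2$. I would expect this to be achieved either by placing two complementary gadgets in series along $e$, or by a single more intricate construction whose perfect-matching analysis via Lemma \ref{lem:P+matchings} admits only the middle behavior. Verifying bridgelessness of $G^{(i)}$ and that the restriction of an arbitrary FR-triple of $G^{(i)}$ genuinely lands on an FR-triple of $G$ with the exact value $\nu(e) = i$ would be the heart of the argument.
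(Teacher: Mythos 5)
The statement you set out to prove is Conjecture \ref{strongFRC}, an open conjecture of Mkrtchyan and Vardanyan; the paper does not prove it, and only cites from \cite{MKRTCHYAN2019} the fact that it is equivalent to the Fan-Raspaud Conjecture. Your proposal is therefore aimed at the wrong target: even if your gadget construction worked perfectly, what you would obtain is the implication ``Conjecture \ref{FRC} implies Conjecture \ref{strongFRC}'', i.e.\ the already known, cited equivalence of the two conjectures --- not a proof of the statement, since Conjecture \ref{FRC} is itself open. So there is no proof here, and none in the paper to compare against.

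Even read as a sketch of the equivalence, the construction has a genuine gap. Conjecture \ref{FRC} can only be applied to a \emph{bridgeless cubic} graph, but the moment you ``augment the Petersen piece with additional copies of matchings $M_j$'' the gadget ceases to be cubic, so your $G^{(i)}$ is no longer in the scope of the hypothesis; Lemma \ref{lem:P+matchings} concerns $(k+1)$-PDPMs of the $(k+3)$-regular graphs $P^{\ca M}$ and tells you nothing about FR-triples of a cubic graph. More importantly, the FR-condition only supplies the \emph{upper} bound $\nu_{\ca T}(f)\le 2$, whereas forcing $\nu_{\ca T}(e)=i$ exactly for $i\in\{1,2\}$ requires lower bounds: at a cubic vertex the three matchings of a triple distribute over the three incident edges as $(1,1,1)$ or $(2,1,0)$, and prescribing which edge receives which multiplicity in an \emph{arbitrary} FR-triple of the modified graph is precisely the content of the statement you are trying to establish, so the ``filter gadget'' whose design you defer is where the entire difficulty lives. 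If your goal is the equivalence, the place to look is \cite{MKRTCHYAN2019}; within this paper the only proved statement in this vicinity is Observation \ref{cotex-3con}, which merely reduces a minimum counterexample to the $3$-edge-connected case by splitting along $2$-edge-cuts --- a far more modest reduction than the one you propose.
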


In the same paper, they also pointed out the following observation but without proof.  
To keep the paper self-contained, we present a short proof here.

\begin{obs}[Mkrtchyan and Vardanyan \cite{MKRTCHYAN2019}] \label{cotex-3con}
A minimum possible counterexample $G$ to Conjecture \ref{strongFRC} with respect to $|V(G)|$ is $3$-edge-connected.
\end{obs}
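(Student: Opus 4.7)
The plan is to argue by contradiction, eliminating any $2$-edge-cut in the minimum counterexample $G$ through the standard $2$-edge-cut reduction for cubic graphs. Suppose $G$ contains a $2$-edge-cut $\{f_1,f_2\}$ with $f_1=x_1y_1$ and $f_2=x_2y_2$, splitting $V(G)$ into sides $A$ and $B$ with $x_1,x_2\in A$ and $y_1,y_2\in B$. A handshake argument on $G[A]$ (giving $|E(G[A])|=(3|A|-2)/2$) forces $|A|$ (and hence $|B|$) to be even, so by Observation \ref{odd-insect} every perfect matching of $G$ uses either both or neither of $f_1,f_2$. The standard reduction produces two bridgeless cubic graphs $G_1=G[A]+x_1x_2$ and $G_2=G[B]+y_1y_2$, both strictly smaller than $G$; by minimality each satisfies Conjecture \ref{strongFRC}.

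The heart of the argument is the bijection between perfect matchings of $G$ and pairs $(M_1,M_2)$ of perfect matchings of $G_1,G_2$ satisfying $x_1x_2\in M_1$ if and only if $y_1y_2\in M_2$: when both auxiliary edges appear one replaces them by $f_1,f_2$ in the glued matching, and otherwise one takes the union. In particular, if $\ca T_1,\ca T_2$ are FR-triples of $G_1,G_2$ with $\nu_{\ca T_1}(x_1x_2)=\nu_{\ca T_2}(y_1y_2)=j$, then pairing the $j$ matchings of $\ca T_1$ containing $x_1x_2$ with those of $\ca T_2$ containing $y_1y_2$ and pairing the remaining $3-j$ arbitrarily, and gluing each pair as above, yields a triple $\ca T$ of perfect matchings of $G$. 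One checks directly that $\nu_{\ca T}(e')=\nu_{\ca T_1}(e')\le 2$ for $e'\in E(G[A])$, that $\nu_{\ca T}(e')=\nu_{\ca T_2}(e')\le 2$ for $e'\in E(G[B])$, and that $\nu_{\ca T}(f_s)=j\le 2$, so $\ca T$ is an FR-triple of $G$.

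To derive the contradiction, fix an edge $e\in E(G)$ and a target $i\in\{0,1,2\}$. If $e\in E(G[A])$, I would invoke Conjecture \ref{strongFRC} on $G_1$ with input $(e,i)$ to obtain $\ca T_1$ with $\nu_{\ca T_1}(e)=i$, set $j=\nu_{\ca T_1}(x_1x_2)\in\{0,1,2\}$, and then invoke the conjecture on $G_2$ with input $(y_1y_2,j)$ to obtain a compatible $\ca T_2$; gluing gives $\ca T$ with $\nu_{\ca T}(e)=\nu_{\ca T_1}(e)=i$. The case $e\in E(G[B])$ is symmetric, and if $e\in\{f_1,f_2\}$ I would instead apply the conjecture on $G_1$ with $(x_1x_2,i)$ and on $G_2$ with $(y_1y_2,i)$, so that the glued $\ca T$ satisfies $\nu_{\ca T}(e)=i$. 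In every case $G$ admits an FR-triple witnessing the desired value, contradicting the choice of $G$; hence $G$ has no $2$-edge-cut and is $3$-edge-connected.

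The main obstacle is conceptual rather than computational: the reduction only goes through because the \emph{strong} form of the conjecture allows one to prescribe the multiplicity of a particular edge in the FR-triple, which provides exactly the flexibility needed to synchronise the count of the auxiliary edges $x_1x_2$ and $y_1y_2$ across $G_1$ and $G_2$; the ordinary Fan-Raspaud Conjecture \ref{FRC} would not suffice. A minor technical issue is to rule out the degenerate case where the two cut-edges share an endpoint (which would turn an auxiliary edge into a loop), but this is handled by choosing a minimal $2$-edge-cut or by a routine local adjustment of the reduction.
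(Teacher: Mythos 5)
Your proposal is correct and follows essentially the same route as the paper: split along the $2$-edge-cut into two smaller bridgeless cubic graphs with an auxiliary edge on each side, use the minimality of $G$ (i.e., the strong form of the conjecture on both pieces) to synchronise the multiplicity of the two auxiliary edges, and glue the resulting FR-triples, with the same case split according to whether $e$ lies in the cut or on one side. The paper's write-up is terser (it leaves the parity/gluing verification implicit), but the argument is the same.
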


\begin{proof} 
Suppose that $G$ is a minimum counterexample to Conjecture \ref{strongFRC} with respect to $|V(G)|$. Then, there is $e\in E(G)$ and $i\in\{0,1,2\}$ such that no $FR$-triple $\ca T$ satisfies $\nu_{\ca T}(e)=i$. Suppose that there is a set $X\subseteq V(G)$ with $u,v\in X$ and $\partial_G(X)=\{ux,vy\}$. Let $H_1=G[X]+\{uv\}$ and $H_2=G-X+\{xy\}$. Notice that both $H_1$ and $H_2$ are bridgeless cubic graphs.
If $ e \in \{ux,vy\}$, since $|V(G)|$ is minimum, there is an $FR$-triple $\ca T_1$ of $H_1$ and an $FR$-triple $\ca T_2$ of $H_2$ such that $\nu_{\ca T_1}(uv)=\nu_{\ca T_2}(xy)=i$. Then $\ca T_1$ and $\ca T_2$ can be used to construct an $FR$-triple $\ca T$ of $G$ with $\nu_{\ca T}(e)=i$, a contradiction. Hence, without loss of generality we may assume $e \in E(H_1)$. Since $|V(G)|$ is minimum, there is an $FR$-triple $\ca T_1$ of $H_1$ and an $FR$-triple $\ca T_2$ of $H_2$ such that $\nu_{\ca T_1}(e)=i$ and $\nu_{\ca T_2}(xy)=\nu_{\ca T_1}(uv)$. Again, $\ca T_1$ and $\ca T_2$ can be used to construct an $FR$-triple $\ca T$ of $G$ with $\nu_{\ca T}(e)=i$, a contradiction.
\end{proof}

\subsection{Relation to the Fan-Raspaud Conjecture}

In this subsection we show that the Fan-Raspaud Conjecture is true if there is no poorly matchable $5$-edge-connected $5$-graph.

\begin{theo}\label{FR_implication}
If $m(5) \geq 2$, then Conjecture~\ref{strongFRC} is true.
\end{theo}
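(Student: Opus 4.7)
The plan is to reduce the statement, via the hypothesis $m(5)\geq 2$ and its reformulations in Theorem \ref{eq-st1}, to a $2$-PDPM existence problem in a $5$-edge-connected $5$-graph $H$ built from $G$. By Observation \ref{cotex-3con}, it suffices to prove Conjecture \ref{strongFRC} for $3$-edge-connected bridgeless cubic graphs, so I assume $G$ is such. Fix $e=uv\in E(G)$ and $i \in \{0,1,2\}$. Using Theorem \ref{2-factor-theo}, I select a $2$-factor $F$ of $G$ that intersects every $3$- and $4$-edge-cut and contains $e$ when $i=0$, or avoids $e$ when $i\in\{1,2\}$; both are realised by extending a suitable pair of adjacent edges at $u$. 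I then set $H := G + F$.

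A short cut-counting argument shows $H$ is $5$-regular and $5$-edge-connected, hence a $5$-edge-connected $5$-graph: for any $X \subset V(H)$, $|\partial_H(X)| = |\partial_G(X)| + |F\cap\partial_G(X)|$, and since $F$ is $2$-regular $|F\cap\partial_G(X)|$ is even, so $|F\cap\partial_G(X)|\geq 2$ whenever $|\partial_G(X)|\in\{3,4\}$ by Theorem \ref{2-factor-theo}. Let $\pi\colon E(H)\to E(G)$ be the projection collapsing the two parallel copies of every $f\in F$ to $f$. The map $\pi$ sends any perfect matching of $H$ to a perfect matching of $G$, and for two disjoint perfect matchings $M_a,M_b$ of $H$ one has $\pi(M_a)\cap \pi(M_b) \subseteq F$, since non-$F$-edges have multiplicity one in $H$ and thus cannot lie in both $M_a$ and $M_b$. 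Setting $M_c':=E(G)\setminus F$ (a perfect matching of $G$ disjoint from $F$), the triple $\ca T=\{\pi(M_a), \pi(M_b), M_c'\}$ automatically satisfies $\pi(M_a)\cap \pi(M_b)\cap M_c' \subseteq F\cap M_c' = \emptyset$ and is therefore an $FR$-triple, regardless of the specific choice of $2$-PDPM.

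It remains to force $\nu_{\ca T}(e)=i$. For $i=2$, choosing $F$ with $e\notin F$ and applying Theorem \ref{eq-st1}(ii) to the unique copy of $e$ in $H$ yields a $2$-PDPM of $H$ containing $e$, so $\nu_{\ca T}(e) = 1 + 1 = 2$. For $i=1$, Theorem \ref{eq-st1}(iii) gives a $2$-PDPM of $H$ avoiding $e$, contributing $\nu_{\ca T}(e) = 0 + 1 = 1$ only through $M_c'$. The main obstacle is $i=0$: now $e\in F$ has two parallel copies $e^{(1)},e^{(2)}$ in $H$ and I must avoid both simultaneously. My plan is to invoke Theorem \ref{eq-st1}(iv) at $u$ with the distinguished edge $g_2$, the unique non-$F$-edge in $\partial_G(u)$; the other four edges at $u$ in $H$ are $e^{(1)}, e^{(2)}, g_1, g_1^{(2)}$, where $g_1$ is the second $F$-edge at $u$, and the statement provides at least three of them that can be paired with $g_2$ in a $2$-PDPM. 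By pigeonhole, at least one of $g_1, g_1^{(2)}$ is among those three, so there is a $2$-PDPM $\{M_a, M_b\}$ of $H$ using $g_2$ and a copy of $g_1$ at $u$. Since $e^{(1)}$ and $e^{(2)}$ are the only $uv$-edges of $H$, they can appear in $M_a\cup M_b$ only at $u$; but $u$ is already saturated in each of $M_a, M_b$ by $g_2$ or a copy of $g_1$, hence both copies of $e$ are globally avoided. Projecting, $\pi(M_a)$ and $\pi(M_b)$ both avoid $e$, and combined with $e\notin M_c'$ this gives $\nu_{\ca T}(e) = 0$, completing the analysis and proving Conjecture \ref{strongFRC} under $m(5)\geq 2$.
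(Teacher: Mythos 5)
Your proposal is correct and follows essentially the same route as the paper: reduce to a $3$-edge-connected minimum counterexample via Observation \ref{cotex-3con}, form $H=G+E(F)$ with $F$ a Kaiser--\v{S}krekovski $2$-factor chosen to contain or avoid $e$ according to $i$, and then apply parts $(ii)$, $(iii)$ and $(iv)$ of Theorem \ref{eq-st1} to produce a $2$-PDPM of $H$ meeting $e$ the right number of times. Your pigeonhole use of $(iv)$ at an endpoint of $e$ to avoid both parallel copies of $e$ in the $i=0$ case is exactly the paper's argument, so there is nothing further to add.
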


\begin{proof}
By contradiction, suppose that $m(5) \geq 2$ and Conjecture~\ref{strongFRC} is false. Let $G$ be a minimum counterexample to Conjecture \ref{strongFRC} with respect to $|V(G)|$. Then, there is an edge $e = uv$ of $G$ and an $i\in\{0,1,2\}$ such that no $FR$-triple $\ca T$ satisfies $\nu_{\ca T}(e)=i$. By Observation \ref{cotex-3con}, $G$ is $3$-edge-connected.

First, we consider the case $i=0$. By Theorem \ref{2-factor-theo} there is a $2$-factor $F$ of $G$ such that $e \in E(F)$ and $F$ intersects every edge-cut of cardinality $3$ and $4$. Let $H = G + E(F)$ and let $e'$ be the new edge parallel to $e$. Since $G$ is $3$-edge-connected, the graph $H$ is $5$-edge-connected by the choice of $F$. 
Since $m(5) \geq 2$, it follows with Theorem \ref{eq-st1} $(iv)$ that for each edge $e_0 \in \partial_H(v)\setminus\{e,e'\}$, there are at least three edges $e_1,e_2,e_3 \in \partial_H(v) \setminus \{e_0\}$ such that
for each $j \in \{1,2,3\}$ there exists a $2$-PDPM containing $e_j$ and $e_0$. This implies that $H$ has two disjoint perfect matchings $N_1$ and $N_2$ such that $e$ and $e'$ are in none of them. In the graph $G$,
let $N_{1}'$ and $N_{2}'$ be the perfect matchings corresponding to $N_1$ and $N_2$, respectively. Let $N_3=E(G)\setminus E(F)$. Since $N_1$ and $N_2$ are disjoint, every edge of $N_{1}' \cap N_{2}'$ belongs to $E(F)$, i.e.\ $\ca T=\{N_{1}',N_{2}',N_3\}$ is an $FR$-triple of $G$. Furthermore $\nu_{\ca T}(e)=0$, a contradiction.

Next suppose $i \in \{1,2\}$. By Theorem \ref{2-factor-theo} we can choose a $2$-factor $F$ of $G$ such that $e \notin E(F)$ and $F$ intersects every edge-cut of cardinality $3$ and $4$. Again, the graph $H$ defined by $H = G + E(F)$ is $5$-edge-connected. Since $m(5) \geq 2$, by statements $(ii)$ and $(iii)$ of Theorem \ref{eq-st1}, $H$ has two disjoint perfect matchings $N_1$ and $N_2$ such that $e$ is in exactly $i-1$ of them. Therefore, $\ca T=\{N_{1}',N_{2}',N_3\}$ is an $FR$-triple of $G$ with $\nu_{\ca T}(e)=i$ where $N_{1}'$ and $N_{2}'$ are the perfect matchings of $G$ that correspond to $N_1$ and $N_2$, respectively, and $N_3=E(G)\setminus E(F)$. This leads to a contradiction again.
\end{proof}

If $m(5) \geq 2$, then in particular every $5$-edge-connected $5$-graph with an underlying cubic graph has two disjoint perfect matchings. By adjusting Theorem \ref{eq-st1},  one can show the following strengthening of Theorem \ref{FR_implication} (for a sketch of the proof, see Appendix \ref{Prof_FR_implication_cubic_case}).

\begin{theo} \label{FR_implication_cubic_case}
If every $5$-edge-connected $5$-graph whose underlying graph is cubic has two disjoint perfect matchings, then Conjecture \ref{strongFRC} is true.
\end{theo}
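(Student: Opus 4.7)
The plan is to retrace the proof of Theorem \ref{FR_implication} and upgrade each appeal to Theorem \ref{eq-st1} to a cubic-underlying analog. Because the new hypothesis supplies $2$-PDPMs only for $5$-edge-connected $5$-graphs whose underlying graph is cubic, every auxiliary graph that appears in the proof must itself be of this form.

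I would first mimic the opening of Theorem \ref{FR_implication}: take a minimum counterexample $G$ to Conjecture \ref{strongFRC} (simple cubic and $3$-edge-connected by Observation \ref{cotex-3con}), a bad pair $(e,i)$, a $2$-factor $F$ from Theorem \ref{2-factor-theo} with $e \in F$ if and only if $i = 0$, and the graph $H := G + E(F)$. The additional remark I rely on is that the underlying graph of $H$ is $G$ itself, so $H$ is a $5$-edge-connected $5$-graph with cubic underlying graph; at every vertex the incident-edge multiplicity pattern is $(1,2,2)$, with the mult-$1$ edge being the one in $E(G) \setminus E(F)$ and the two mult-$2$ edges being those in $E(F)$.

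The heart of the proof is to establish cubic-underlying analogs of Theorem \ref{eq-st1}(ii), (iii), (iv) for inputs of this shape. For the analog of $(ii) \Rightarrow (iv)$, the original auxiliary graph is already built from the gadget obtained from $K_4$ by doubling the four cycle-edges and keeping the two diagonals simple, which has cubic underlying graph $K_4$; I would verify that the substitution of $(G^1, v^1)$ and $(G^3, v^3)$ at $u_1$ and $u_3$ preserves cubic underlying, by routing the edges at $v^j$ so that each multi-edge bundle at a neighbour of $v^j$ lands on a single vertex in $\{u_2, u_3, u_4\}$---this routing is possible precisely because the multiplicity pattern at $v^j$ is $(1,2,2)$. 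When the distinguished edge $e=vv_1$ corresponds to a mult-$2$ neighbour of $v$, I would use a symmetric variant of the gadget in which the diagonal $u_1u_3$ carries multiplicity $2$ (the multiplicities can be reassigned around $K_4$ so that each vertex remains of degree $5$ and edge-connectivity $5$). For the analog of $(i) \Rightarrow (ii),(iii)$, the $C_{2r}$-plus-hub construction used in the original proof creates two vertices of underlying degree $5$ and must be replaced; I would glue two copies of $H$ through a small cubic-underlying gadget, for instance a $4$-cycle $abcda$ with $ab, cd$ of multiplicity $2$ and $bc, ad$ of multiplicity $1$, and choose the attachment multiplicities so that every attachment vertex has exactly three neighbours in the underlying graph while the whole graph remains $5$-edge-connected.

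With these cubic-underlying analogs at hand, the rest is a transcription of the proof of Theorem \ref{FR_implication}: the hypothesis applied to each auxiliary graph yields a $2$-PDPM of $H$ with the required behaviour at $e$ (avoiding both copies when $e \in F$, or avoiding/containing the simple edge $e$ when $e \notin F$), and combining with $N_3 = E(G) \setminus E(F)$ gives an FR-triple of $G$ with $\nu_{\ca T}(e) = i$, contradicting the choice of $G$. The main obstacle is the analog of $(i) \Rightarrow (ii), (iii)$: a naive two-copy construction exhibits an edge-cut of size $2$ across the two copies, so the gadget must simultaneously maintain $5$-regularity, cubic underlying, $5$-edge-connectivity and the desired correspondence between $2$-PDPMs of the gadget and $2$-PDPMs of $H$ with prescribed behaviour at $e$; this forces a case analysis on whether $e$ has multiplicity $1$ or $2$ in $H$ and on which neighbour of $v$ the edge $e$ connects $v$ to.
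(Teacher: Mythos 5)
Your overall architecture matches the paper's: prove a cubic-underlying analogue of Theorem \ref{eq-st1} for $r=5$, $k=2$, and then rerun the proof of Theorem \ref{FR_implication} on $H=G+E(F)$, whose underlying graph is $G$ itself with multiplicity pattern $(1,2,2)$ at every vertex. Your treatment of the $(ii)\Rightarrow(iv)$ analogue is essentially the paper's: the $K_4$ gadget with doubled cycle edges already has cubic underlying graph, and the routing you describe (each doubled bundle at a neighbour of $v^j$ landing on a single vertex) is exactly what is needed. The extra variant gadget for a multiplicity-$2$ distinguished edge is unnecessary, since in the application one only ever needs to contain the simple edge at $v$ while avoiding a parallel pair (case $i=0$), or to contain/avoid a simple edge (cases $i\in\{1,2\}$).

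The genuine gap is the analogue of $(i)\Rightarrow(ii),(iii)$, which you correctly identify as the main obstacle but do not resolve. Your two-copies-of-$H$-glued-through-a-$4$-cycle gadget does not force the property that every perfect matching of the auxiliary graph meets $\partial\bigl(V(H^j)\setminus\{v^j\}\bigr)$ in exactly one edge, and that property is what makes the translation back to perfect matchings of $H$ work. Concretely, with the boundary of a copy consisting of the simple connecting edge $v_1^1v_1^2$ and two doubled bundles, a perfect matching $N_1$ may contain $v_1^1v_1^2$ together with one edge from each bundle, so that $\vert N_1\cap\partial(V(H^1)\setminus\{v^1\})\vert=3$ (and possibly the same on the other side); then the restriction of $N_1$ to either copy is not a perfect matching and no $2$-PDPM of $H$ with the prescribed behaviour at $e$ can be extracted. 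The counting argument that excludes this in the proof of Theorem \ref{eq-st1} relies on the hub vertex $u$ being adjacent to all five copies, forcing $\sum_{i\in I} m_i=\vert I\vert$ with each $m_i\ge 1$; it is unavailable with only two copies. The paper's resolution is to keep the five copies around $C_{10}$ and replace only the two hub vertices by copies of $W_5+E(C_5)$ to restore the cubic underlying graph; since such a wheel can now absorb three boundary edges of a matching, a further three-case analysis is required to locate a copy $G^i$ with $m_{i1}=m_{i2}=1$ and the right behaviour at the unique edge joining $v_1^i$ to the wheel. Without a device of this kind, your sketch does not close.
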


\subsection{Relation to the $5$-cycle double cover Conjecture}

Now we focus on the consequences of the non-existence of  $5$-edge-connected class $2$ $5$-graphs. 
Let $k \geq 3$ be an integer. A $k$-\emph{wheel} $W_k$ is a $k$-circuit $C_k$ plus one additional 
vertex $w$ adjacent to all vertices of $C_k$.

\begin{theo}
	The following statements are equivalent.\\	
(i) Every  $5$-edge-connected $5$-graph is class $1$.\\
(ii) Every  $5$-edge-connected $5$-graph with an underlying cubic graph is class $1$.
 
\end{theo}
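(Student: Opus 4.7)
Direction $(i) \Rightarrow (ii)$ is immediate since the class of graphs in $(ii)$ is contained in that of $(i)$. For the reverse implication, I will argue by contrapositive: assuming the existence of a class $2$, $5$-edge-connected $5$-graph $G$, I will construct a class $2$, $5$-edge-connected $5$-graph $G^{*}$ whose underlying graph is cubic, contradicting $(ii)$. The starting observation is that in any $5$-edge-connected $5$-graph $H$, every two vertices $u,v$ satisfy $\mu_H(u,v) \le 2$, because $|\partial_H(\{u,v\})| = 10 - 2\mu_H(u,v) \ge 5$. Consequently, every vertex of the underlying graph has degree $3$, $4$, or $5$, with local multiplicity pattern $(1,2,2)$, $(1,1,1,2)$, or $(1,1,1,1,1)$ respectively; $H$ has cubic underlying graph precisely when every vertex has pattern $(1,2,2)$.

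The main tool will be the following gadget. Take five new vertices $x_1,\ldots,x_5$ arranged as a doubled $5$-cycle (each edge $x_ix_{i+1}$ has multiplicity $2$, with indices mod $5$), with one pendant external edge at each $x_i$. Every $x_i$ has multiplicity pattern $(2,2,1)$, so each is cubic in the underlying graph. I will use this gadget to replace a non-cubic vertex $v$ of $G$: when $v$ has underlying degree $5$, each pendant is attached to a distinct neighbor of $v$; when $v$ has underlying degree $4$ with double neighbor $v_1$, two pendants are attached to $v_1$ and one each to the other three neighbors. Two properties of the resulting graph $G'$ must be verified. First, $G'$ is $5$-edge-connected: for any cut separating the gadget a case analysis mirroring that of Lemma \ref{lem:P+matchings_Connectivity} shows that a cut of size at most $4$ would yield a cut of size at most $4$ in $G$ after contracting the gadget back to a single vertex, contradicting the $5$-edge-connectivity of $G$. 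Second, in every proper $5$-edge-coloring of the gadget-plus-pendants the five pendant edges receive pairwise distinct colors: if two pendants $p_i, p_j$ shared a color $c$, then at each of their endpoints the four doubled cycle edges would use the four colors of $\{1,\ldots,5\}\setminus\{c\}$; propagating this constraint along the cycle forces the four doubled edges incident with some vertex $x_k$ to lie in a set of only three colors, which is impossible. Together these two properties yield that $G$ is class $2$ whenever $G'$ is class $2$, since any proper $5$-edge-coloring of $G'$ descends to one of $G$ via the five distinct pendant colors at each replaced vertex.

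I will organize the iteration using the auxiliary graph $D$ consisting of the multiplicity-$2$ edges of the underlying graph; then $\deg_D(v) \in \{0,1,2\}$ depending on whether $v$ has underlying degree $5$, $4$, or $3$, so $D$ is a disjoint union of isolated vertices, paths (whose endpoints have underlying degree $4$) and cycles (inside the cubic part). I will first apply the gadget at every isolated vertex of $D$ to eliminate all underlying-degree-$5$ vertices. For each path $v_0 v_1 \cdots v_n$ of $D$ I will then cascade the substitution along $v_0, v_1, \ldots, v_{n-1}$ and finally $v_n$: after replacing $v_{i-1}$, the vertex $v_i$ acquires pattern $(1,1,1,2)$ when $i < n$ (so the degree-$4$ gadget applies) and pattern $(1,1,1,1,1)$ when $i = n$ (so the degree-$5$ gadget applies), hence each substitution is well-defined. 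A potential argument counting the number of non-cubic vertices shows that the cascade along each path decreases the potential by $2$, so the entire procedure terminates after finitely many substitutions and yields the required $G^{*}$.

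The main obstacle will be the careful verification of the two gadget properties: the $5$-edge-connectivity check requires enumerating how a small cut of $G'$ could split the gadget together with some of the external neighbors, and the pairwise distinctness of the pendant colors requires ruling out every coincidence $c(p_i)=c(p_j)$ by a local color count at the intermediate gadget vertices. A secondary subtlety is that the degree-$4$ substitution raises the underlying degree of the double neighbor $v_1$ by one; the cascade along each path of $D$ is arranged precisely so that this temporary increase is absorbed by the very next substitution, which is why the process eventually terminates.
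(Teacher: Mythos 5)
Your argument is correct in substance, but it takes a noticeably longer route than necessary, and the paper's proof shows where the effort can be saved. The paper replaces \emph{every} vertex $v$ of $G$ by $(W_5^v,w^v)$ with $W_5+E(C_5)$ -- i.e.\ exactly your gadget together with a hub of degree $5$ -- via Definition~\ref{Replace-H}. Doing this at every vertex makes the underlying graph of the result cubic immediately: each gadget vertex has pattern $(2,2,1)$ and every pendant edge ends at another gadget vertex, so your entire bookkeeping with the auxiliary graph $D$, the cascade along its paths, and the termination/potential argument becomes unnecessary. Connectivity then follows in one line from Lemma~\ref{GU-rcon} (the gadget-with-hub is a $5$-edge-connected $5$-regular graph, so each replacement preserves $5$-edge-connectivity); this lemma also covers your degree-$4$ substitution, where two pendants go to the same neighbor, since Definition~\ref{Replace-H} only requires the new graph to be regular. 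Finally, the distinctness of the five pendant colors is Observation~\ref{odd-insect}: $|V(C_5^v)|$ is odd, so each of the five pairwise disjoint perfect matchings meets the $5$-element boundary $\partial(V(C_5^v))$ an odd number of times, forcing exactly one each. What your approach buys is that the modification is local (only non-cubic vertices are touched), but at the price of the cascade and of re-proving connectivity and color-distinctness by hand.

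Two of your verifications are not quite complete as stated. First, the propagation argument for pendant distinctness, as you describe it, only produces the contradiction ``four doubled edges at some $x_k$ confined to three colors'' when the two equal-colored pendants sit at \emph{adjacent} gadget vertices; if they sit at non-adjacent vertices (say $x_1$ and $x_3$), you must first deduce that the pendant at $x_2$ also receives that color, and the contradiction then appears as the two parallel $x_4x_5$ edges being forced into a single color. The parity count via Observation~\ref{odd-insect} avoids this case split entirely. Second, your connectivity justification -- that a cut of size at most $4$ in $G'$ ``yields a cut of size at most $4$ in $G$ after contracting the gadget back'' -- is not literally true, since the at least four internal gadget edges in such a cut vanish under contraction and the pendants must be accounted for separately (and the cited Lemma~\ref{lem:P+matchings_Connectivity} concerns a different construction). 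The cleanest fix is simply to invoke Lemma~\ref{GU-rcon}.
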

\begin{proof}
	The first statement implies trivially the second one. We prove now the other implication. Let $G$ be a  $5$-edge-connected $5$-graph.  For every vertex $v$ of $G$, let $W_5^v$ be a copy of the graph $W_5+E(C_5)$. Moreover, let $w^v$ and $C_5^v$ be the vertex and, respectively, the circuit of $W_5^v$ corresponding to $w$ and $C_5$ in $W_5$. Following the procedure described in Definition \ref{Replace-H}, successively replace every vertex $v$ of $G$ with $(W_5^v,w^v)$
	to obtain a new graph $H$, which is $5$-regular and $5$-edge-connected. Moreover, its underlying graph is cubic and so $H$ is class $1$ by statement $(ii)$. Hence, $H$ has a $5$-PDPM, denoted by ${\cal N}=\{N_1,\ldots,N_5\}$. Since $|V(C_5^v)|$ is odd, by Observation \ref{odd-insect}, we have that, for all $i\in\{1,\dots,5\}$, $|N_i\cap \partial_H(V(C_5^v))|=1$. Hence, the restriction $N_i'$ of $N_i$ to the graph $G$ is a perfect matching of $G$. Moreover, $\{N_1',\dots,N_5'\}$ is a $5$-PDPM of $G$. Therefore, $G$ is class $1$.
\end{proof}

It is well known that a counterexample of minimum order to Conjecture \ref{5CDC} is a cyclically $4$-edge-connected cubic class $2$ graph. 

\begin{theo}
	If  $m(5)=5$, then Conjecture \ref{5CDC} is true.
\end{theo}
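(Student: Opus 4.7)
My plan is by contradiction: let $G$ be a bridgeless cubic graph having no $5$-CDC, chosen of minimum order. By classical reductions, such a minimum counterexample is cubic, cyclically $4$-edge-connected, and of class $2$. By Theorem \ref{2-factor-theo}, $G$ admits a $2$-factor $F$ that intersects every edge-cut of cardinality $3$ and $4$. Set $M = E(G)\setminus F$, which is a perfect matching of $G$. I will mimic the construction used in the proof of Theorem \ref{FR_implication} and define $H = G + E(F)$, the multigraph obtained by doubling each edge of $F$. Then $H$ is $5$-regular. For $5$-edge-connectivity, observe that a $2$-factor meets every edge-cut of $G$ in an even number of edges; combined with Theorem \ref{2-factor-theo}, this forces $|F\cap\partial_G(X)|\ge 2$ for every cut of size $3$ or $4$, whence $|\partial_H(X)| = |\partial_G(X)| + |F\cap \partial_G(X)| \ge 5$ for every proper nonempty $X\subset V(G)$.

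Applying the hypothesis $m(5)=5$ to $H$, we obtain a $5$-PDPM $\{N_1,\ldots,N_5\}$ of $H$; because $H$ is $5$-regular, this is a proper $5$-edge-coloring. Translating back to $G$: each $F$-edge $e$ (with two parallel copies in $H$) receives a $2$-element color set $L(e)\subseteq\{1,\ldots,5\}$, and each $M$-edge receives a single color $L(e)$. For $i\in\{1,\ldots,5\}$, put $N_i' = \{e\in E(G): i\in L(e)\}$; then each $N_i'$ is a perfect matching of $G$, and the family $\{N_1',\ldots,N_5'\}$ covers every edge of $F$ exactly twice and every edge of $M$ exactly once. Observe at each vertex $v$ that the three labels $L(e)$ ($e\ni v$) form a partition of $\{1,\ldots,5\}$ into two pairs (one for each of the two $F$-edges at $v$) and one singleton (for the $M$-edge at $v$).

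To extract a $5$-CDC from this data, a natural candidate is the family of $5$ even subgraphs $C_i := (E(G)\setminus N_i')\triangle F$ for $i\in\{1,\ldots,5\}$. A direct vertex-local computation shows that each $C_i$ consists of alternating circuits (alternating $F$-edges and $M$-edges), that at every $v$ the subgraph $C_i$ has degree $0$ if $i$ equals the singleton color at $v$ and degree $2$ otherwise, and that the family covers every $F$-edge exactly twice. The remaining task is to arrange that every $M$-edge is also covered exactly twice; from the computation above, at the level of the candidate family every $M$-edge is covered exactly four times, so the total overcount on $M$-edges equals $|V(G)|$.

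The main obstacle is this final correction step: one must reduce the $M$-coverage uniformly from $4$ to $2$ by removing, from each $C_i$, a union of alternating sub-circuits, in such a way that the $F$-coverage remains $2$ while each $M$-edge loses exactly two units of coverage. I expect this to be achievable by working on the auxiliary multigraph obtained from $G$ by contracting $M$, whose $4$-regular structure together with the parity/consistency constraints imposed by the pair-singleton labeling forces a globally coherent choice of alternating circuits to delete. The cyclic $4$-edge-connectivity of $G$ and the fact that $F$ meets small cuts play a crucial role here, since they control how alternating circuits interact with edge-cuts of $G$ and thereby guarantee enough flexibility in the local choices to patch them into a global one.
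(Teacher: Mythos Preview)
Your construction up to the point where you obtain the five even subgraphs $C_i=(E(G)\setminus N_i')\triangle F$ is correct, and your coverage computation is right: each $F$-edge lies in exactly two of the $C_i$ and each $M$-edge in exactly four. The genuine gap is the ``correction step'', and in fact the specific fix you propose cannot work. Each $C_i$, where nonempty, is $2$-regular and its components are circuits that \emph{alternate} between $F$-edges and $M$-edges (as your own local analysis shows). Hence any union of sub-circuits you delete from $C_i$ removes exactly as many $F$-incidences as $M$-incidences along it; in particular, you cannot lower the $M$-coverage without simultaneously lowering the $F$-coverage. Since the $F$-coverage is already the target value $2$, the only admissible deletion is the empty one, and the overcount on $M$ persists. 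So the plan in your final paragraph is not merely incomplete---it is infeasible as stated.

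The paper avoids this obstruction by a different construction. Instead of $G+E(F)$, it doubles \emph{every} edge to form $2G$ (which is $6$-edge-connected, using cyclic $4$-edge-connectivity of $G$), and then replaces each vertex of $2G$ by a small gadget $K$ having one degree-$6$ vertex and four degree-$5$ vertices, with parallel edges of $2G$ attached to the same gadget vertex. The resulting $5$-edge-connected $5$-graph $H$ has, for each original vertex $v$, an even-order piece $K^v-w^v$ whose boundary has size $6$; a proper $5$-edge-colouring of $H$ therefore hits this boundary in $2,2,2,0,0$ across the five colour classes. Restricting each colour class to $G$ (identifying the two parallel copies of every edge) then yields an even subgraph directly, and since each edge of $G$ has two copies in $H$, the five restrictions cover every edge exactly twice. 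No correction step is needed. If you want to rescue your approach, you would need an entirely different way to pass from the $5$-colouring of $G+E(F)$ to five cycles, not a local repair of the $C_i$.
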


\begin{proof}
Let $K$ be the graph obtained from a 4-wheel by doubling the edges of the outer circuit and of
one spoke. Note that $K$ has one vertex of degree $6$, which we denote by $w$, 
and four vertices of degree $5$.
	
	Let $G$ be a minimum counterexample to Conjecture \ref{5CDC} with respect to $|V(G)|$. Then, $G$ is cubic and cyclically $4$-edge-connected. Thus, the graph $2G=G+E(G)$ is $6$-edge-connected. For every vertex $v$ of $G$, let $K^v$ be a copy of $K$ and let $w^v$ be the vertex of $K^v$ corresponding to $w$ in $K$. Analogously to  Definition \ref{Replace-H}, let $H$ be the graph obtained by replacing each vertex $v$ of $2G$ by $(K^v,w^v)$, in such a way that parallel edges of $2G$ are incident with the same vertex of $K^v$. Then, $H$ is a  $5$-edge-connected $5$-graph and therefore, it has a  $5$-PDPM $\mathcal{N}=\{N_1,\ldots,N_5\}$. For every $v\in V(2G)$, there exist exactly three perfect matchings of $\mathcal{N}$, say $N'_1, N'_2, N'_3$, such that  $|N'_i\cap\partial_H(K^v-w^v)|=2$ for each $i\in\{1,2,3\}$. Hence, for every $j\in\{1,\dots,5\}$, the restriction of each $N_j\in \mathcal{N}$ on $G$ induces an even subgraph $C_j'$ of $G$. Moreover, we have $\nu_{\mathcal{C}}(e)=2$ for each $e\in E(G)$, where $\mathcal{C}=\{C_1',\dots,C_5'\}$. So $\mathcal{C}$ is a $5$-CDC of $G$.
\end{proof} 

\subsection{Relation to the Berge-Fulkerson Conjecture}

\begin{obs}\label{BF_implication}
If $m(5) = 5$, then Conjecture \ref{BFC} is true.
\end{obs}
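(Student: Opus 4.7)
The plan is to avoid the elaborate vertex-gadget used for the $5$-Cycle Double Cover implication by simply adjoining a well-chosen $2$-factor to $G$, and then to combine the $1$-factorization delivered by $m(5)=5$ with the perfect matching obtained ``for free'' from its complement. I argue by contradiction and let $G$ be a minimum counterexample (with respect to $|V(G)|$) to Conjecture~\ref{BFC}. The standard splitting arguments across bridges, $2$-edge-cuts and non-trivial $3$-edge-cuts show that $G$ is a cyclically $4$-edge-connected cubic graph.

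By Theorem~\ref{2-factor-theo} I fix a $2$-factor $F$ of $G$ that intersects every edge-cut of $G$ of cardinality $3$ and $4$, and set $H := G + F$. By construction $H$ is $5$-regular. The key step, and the one requiring the most care, is to verify that $H$ is $5$-edge-connected. For a trivial cut $X=\{v\}$ one has $|\partial_H(X)|=3+2=5$; cyclical $4$-edge-connectivity of $G$ rules out non-trivial $3$-edge-cuts; for a non-trivial cut with $|X|$ odd the cubic parity upgrades $|\partial_G(X)|\ge 4$ to $\ge 5$, hence $|\partial_H(X)|\ge 5$; for a cut of size $4$, Theorem~\ref{2-factor-theo} gives $|\partial_G(X)\cap F|\ge 1$, which by the evenness of the $F$-degree sum on $X$ becomes $\ge 2$, yielding $|\partial_H(X)|\ge 6$. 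All remaining cases already satisfy $|\partial_G(X)|\ge 5$.

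Since $H$ is a $5$-edge-connected $5$-regular graph, $m(5)=5$ provides a $1$-factorization $\{N_1,\ldots,N_5\}$ of $H$. A matching cannot contain both parallel copies of an $F$-edge, so each $N_i$ projects to a perfect matching $N_i'$ of $G$. Counting copies in $\sum_i N_i = E(H)$ then shows that every edge of $F$ lies in exactly two of $N_1',\ldots,N_5'$ while every edge of $E(G)\setminus F$ lies in exactly one. The complement $M_6 := E(G)\setminus F$ is itself a perfect matching of $G$ (being the complement of a $2$-factor in a cubic graph), so $\{N_1',\ldots,N_5',M_6\}$ is a multiset of six perfect matchings covering every edge of $G$ exactly twice, i.e., a $BF$-cover, contradicting the choice of $G$. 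The main obstacle is the cut analysis in the second paragraph: doubling $F$ only lifts the trivial cuts to size $5$, so it is precisely the combination of cyclical $4$-edge-connectivity of $G$ and the Kaiser--\v Skrekovski intersection property of $F$ that rescues the bound on non-trivial $3$- and $4$-edge-cuts.
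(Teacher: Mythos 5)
Your proof is correct, and its skeleton coincides with the paper's: take a minimum counterexample $G$, adjoin a $2$-factor $F$ to obtain a $5$-edge-connected $5$-regular graph $H=G+E(F)$, invoke $m(5)=5$ to get a $1$-factorization of $H$, project the five matchings back to $G$ and append the perfect matching $E(G)\setminus E(F)$ to produce a $BF$-cover. Where you genuinely diverge is in the verification that $H$ is $5$-edge-connected. The paper takes $F$ to be an \emph{arbitrary} $2$-factor and obtains the connectivity in one line from the cited fact that a minimum counterexample to the Berge--Fulkerson Conjecture is cyclically $5$-edge-connected. You instead rely only on the weaker and more elementary reduction to cyclic $4$-edge-connectivity and compensate by choosing $F$ via Theorem~\ref{2-factor-theo} so that it meets every $3$- and $4$-edge-cut; your parity bookkeeping is sound (in a cubic cyclically $4$-edge-connected graph every non-trivial odd cut has at least $5$ edges, and a $4$-cut receives an even, hence $\ge 2$, number of $F$-edges, so $|\partial_H(X)|\ge 6$ there). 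This is precisely the device the paper itself employs in the proof of Theorem~\ref{FR_implication}, so your argument trades the external reference for cyclic $5$-edge-connectivity against a short self-contained cut analysis; both routes are valid, yours being marginally more self-contained and the paper's marginally shorter.
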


\begin{proof}
Assume $m(5)=5$ and suppose that $G$ is a counterexample to the Berge-Fulkerson Conjecture such that the order of $G$ is minimum. Let $F$ be a $2$-factor of $G$. As shown in \cite{ReductionBFC}, $G$ is cyclically $5$-edge-connected and hence, $G+E(F)$ is $5$-edge-connected. Therefore, $G+E(F)$ has five pairwise disjoint perfect matchings. The corresponding five perfect matchings of $G$ and $E(G) \setminus E(F)$ are a $BF$-cover of $G$, a contradiction.
\end{proof}

\subsection{Properties of a minimum possible $5$-edge-connected class $2$ $5$-graph}

We are going to prove some structural properties of a smallest possible
$5$-edge-connected class $2$ $5$-graph. 
Let $G$ be a graph and $v \in V(G)$. A \emph{lifting} of $G$ at $v$ is the following operation. Remove two edges $vx$, $vy$ where $x \neq y$ and add a new edge $xy$. In this case we say $vx$ and $vy$ are lifted to $xy$.

\begin{theo}[Mader \cite{MaderLifting}]\label{LiftingTheo}
Let $G$ be a finite graph and let $v \in V(G)$ such that $d(v) \geq 4$, $\vert N(v) \vert \geq 2$ and $G-v$ is connected. There is a lifting of $G$ at $v$ such that, for every pair of distinct vertices $u,w \in V(G) \setminus \{v\}$, the number of edge-disjoint $uw$-paths in the resulting graph equals the number of edge-disjoint $uw$-paths in $G$.
\end{theo}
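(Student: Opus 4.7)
The plan is to recast the statement via Menger's edge-connectivity theorem: since $\lambda_G(u,w)$, the maximum number of edge-disjoint $uw$-paths in $G$, equals the minimum size of a $uw$-cut, it suffices to find two distinct neighbors $x,y$ of $v$ such that lifting $vx,vy$ to $xy$ (yielding $G'$) satisfies $\lambda_{G'}(u,w) = \lambda_G(u,w)$ for every $u,w \in V(G)\setminus\{v\}$. Call such an unordered pair $\{x,y\}$ \emph{admissible}. The task reduces to producing one admissible pair under the hypotheses $d(v)\ge 4$, $|N(v)|\ge 2$, and $G-v$ connected.

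A direct calculation shows that for any $S\subseteq V(G)$, $|\delta_{G'}(S)| = |\delta_G(S)|$ unless $v$ lies on one side of $S$ and both $x,y$ lie on the other, in which case $|\delta_{G'}(S)| = |\delta_G(S)| - 2$. Since a lifting can therefore never increase any $\lambda(u,w)$ (one simply chooses the side of $S$ containing $v$), the pair $\{x,y\}$ fails to be admissible exactly when there exist $u,w\ne v$ and a \emph{tight} set $S$ with $u\in S$, $v\in S$, $w\notin S$, $\{x,y\}\subseteq V(G)\setminus S$, and $|\delta_G(S)|=\lambda_G(u,w)$. Call such an $S$ a \emph{dangerous set} for $\{x,y\}$.

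Assume for contradiction that every pair of distinct neighbors of $v$ admits a dangerous set. The heart of the proof is an uncrossing argument based on the submodular inequalities
\[
|\delta(A)|+|\delta(B)| \ge |\delta(A\cap B)|+|\delta(A\cup B)|, \qquad |\delta(A)|+|\delta(B)| \ge |\delta(A\setminus B)|+|\delta(B\setminus A)|.
\]
Whenever two dangerous sets $A,B$ cross (each contains a neighbor of $v$ that the other does not), tightness forces one of the two inequalities above to be an equality, and one of $A\cap B$, $A\cup B$, $A\setminus B$, $B\setminus A$ is itself a dangerous set for some pair of neighbors of $v$. Choosing a dangerous set $S^{*}$ that maximizes the number of neighbors of $v$ on its complement side, and then selecting another dangerous set witnessing non-admissibility of a pair of neighbors not yet separated from $v$ by $S^{*}$, I would uncross to obtain a larger dangerous set, contradicting the maximality of $S^{*}$. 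The hypotheses enter precisely here: $G-v$ connected ensures that $\delta_G(S^{*})\setminus\partial_G(v)$ is nonempty, $|N(v)|\ge 2$ excludes the degenerate case where all edges at $v$ are parallel, and $d(v)\ge 4$ provides enough edges at $v$ to guarantee that uncrossing produces the required contradiction via a parity/counting argument.

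The main obstacle is the uncrossing bookkeeping: one must track, for each dangerous set, which pair $\{u,w\}$ it witnesses (uncrossed sets may be tight for different pairs than the originals), and one must handle parallel edges carefully since the graph is a multigraph. The condition $d(v)\ge 4$ is essential rather than technical, as the statement is known to fail for $d(v)=3$ (three edges at $v$ need not admit any admissible pair), so the contradiction must genuinely exploit the degree lower bound at the final counting step.
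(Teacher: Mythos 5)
This theorem is quoted from Mader's paper and is not proved in the manuscript, so there is no in-paper argument to compare against; your sketch has to be measured against the standard proof in the literature (Mader, and the streamlined treatment via splitting-off in Frank's work), whose architecture --- Menger, submodularity of $|\delta(\cdot)|$, uncrossing of obstructing sets, and a final counting step using $d(v)\ge 4$ --- you have correctly identified. However, there are two genuine gaps. First, your characterization of non-admissible pairs is wrong: lifting $vx,vy$ destroys $\lambda(u,w)$ as soon as there is a $uw$-cut $S$ with $v\in S$, $x,y\notin S$ and $|\delta_G(S)|\le \lambda_G(u,w)+1$, since then $|\delta_{G'}(S)|=|\delta_G(S)|-2<\lambda_G(u,w)$. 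Restricting to \emph{tight} sets with $|\delta_G(S)|=\lambda_G(u,w)$ misses the slack-one case (the parities of $|\delta_G(S)|$ and $\lambda_G(u,w)$ need not agree), and a pair can fail without any tight witness existing. This is not cosmetic: the classical proof works throughout with ``dangerous'' sets satisfying $|\delta(S)|\le R(S)+1$, and the uncrossing step becomes a case analysis on how the total slack (at most $2$) distributes over $A\cap B$, $A\cup B$ or $A\setminus B$, $B\setminus A$; your assertion that ``tightness forces one of the two inequalities to be an equality'' only makes sense after the definition is repaired, and even then the derived sets may be dangerous for different pairs $(u,w)$ with different $\lambda$-values, which is exactly the delicate bookkeeping you flag but do not resolve.

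Second, the endgame is not the one that works. The known proof does not conclude by enlarging a maximal dangerous set; crossing dangerous sets need not produce a \emph{larger} dangerous set (the union can fail to be dangerous when the relevant $R$-values differ). Instead one shows that if no pair of edges at $v$ is liftable, then $N(v)$ can be covered by at most three maximal dangerous sets, and a count of the edges from $v$ into these sets against their boundary sizes, together with $d(v)\ge 4$ and the connectivity of $G-v$, yields the contradiction. Since this counting step is precisely where all three hypotheses are consumed, and your sketch explicitly defers it (``I would uncross \dots via a parity/counting argument''), the proposal as written is an accurate road map of the standard proof's first half but does not constitute a proof. Given that the manuscript treats the theorem as a black box with a citation, this is acceptable as context, but it should not be mistaken for a self-contained argument.
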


Statement $(ii)$ of the following theorem is already
mentioned in \cite{planar8-graphsclass1} for 
planar $r$-graphs without proof. 

\begin{theo}\label{smallest5regClass2graph}
Let $G$ be a $5$-edge-connected class $2$ $5$-graph such that the order of $G$ is as small as possible. The following statements hold.\\
(i) Every $5$-edge-cut of $G$ is trivial, i.e.\ if  $X \subset V(G)$ and $\vert \partial(X) \vert =5$, then $\vert X \vert =1$ or $\vert V(G) \setminus X \vert =1$. \\
(ii) Every $3$-vertex-cut is trivial, i.e.\ if $X\subset V(G)$, $\vert X \vert =3$ and $G-X$ is not connected, then one component of $G-X$ is a single vertex.
\end{theo}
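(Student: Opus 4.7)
My plan for part (i) is a standard contraction-and-recombine argument. Suppose for contradiction that $\partial(X)$ is a $5$-edge-cut with $|X|,|V(G)\setminus X|\geq 2$. Let $G_1$ (resp.\ $G_2$) be obtained from $G$ by contracting $V(G)\setminus X$ (resp.\ $X$) to a single vertex $x_1$ (resp.\ $x_2$). Both resulting graphs are $5$-regular, and every cut of $G_i$ corresponds to a cut of $G$ of the same cardinality, so they are $5$-edge-connected and hence $5$-graphs of strictly smaller order than $G$. By the minimality of $G$, each is class $1$, so admits a $5$-PDPM. Labeling the edges of $\partial(X)$ as $e_1,\dots,e_5$ and reindexing so that the $i$-th perfect matchings $M_i$ of $G_1$ and $M_i'$ of $G_2$ both contain $e_i$, the sets $N_i:=(M_i\setminus\{e_i\})\cup\{e_i\}\cup(M_i'\setminus\{e_i\})$ form a $5$-PDPM of $G$, contradicting class $2$.

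For part (ii), let $X=\{v_1,v_2,v_3\}$ be a non-trivial $3$-vertex-cut with components $A,B$ of $G-X$, both of size at least $2$. Since $|V(G)|$ is even, exactly one of $|A|,|B|$ is odd; WLOG, $|A|$ is odd. Part (i), parity, and $5$-edge-connectivity together give $|\partial(A)|\geq 7$ odd and $|\partial(B)|\geq 6$ even. Combined with the identity $|\partial(A)|+|\partial(B)|+2|E(G[X])|=15$, the possibilities collapse to the three sub-cases $(|\partial(A)|,|\partial(B)|,|E(G[X])|)\in\{(7,6,1),(7,8,0),(9,6,0)\}$.

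In each sub-case I would construct a smaller graph $G^*$ by deleting $A$ and attaching a single new vertex $v^*$ of degree $5$, joined to $v_1,v_2,v_3$ with multiplicities $m_1,m_2,m_3$ (with $\sum m_i=5$) and $(|\partial(A)|-5)/2$ additional edges within $X$. A direct computation, using $a_i+b_i+\sum_{j\neq i}c_{ij}=5$, yields $|\partial_{G^*}(\{v^*,v_i\})|=10-2m_i$, so $5$-edge-connectivity of $G^*$ forces $m_i\leq 2$ for every $i$, i.e.\ $\{m_1,m_2,m_3\}=\{1,2,2\}$. Those distributions $(a_1,a_2,a_3)$ of $\partial(A)$ across $X$ that would make such a choice of $(m_i)$ impossible turn out to yield a proper $2$-subset of $X$ whose removal isolates $A$ via fewer than $5$ edges, contradicting $5$-edge-connectivity of $G$ itself. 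A symmetric construction replaces $B$ with a vertex $v^{**}$ to give $G^{**}$. Both $G^*$ and $G^{**}$ are then $5$-edge-connected $5$-graphs of strictly smaller order than $G$, hence class $1$ by minimality, and one combines their $5$-PDPMs by pairing matchings according to their use of $\partial(A),\partial(B)$ and the added within-$X$ edges to obtain a $5$-PDPM of $G$, contradicting class $2$.

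The main obstacle is the case analysis in (ii): verifying in every admissible distribution of $\partial(A)$ across $X$ that the single-vertex gadget produces a $5$-edge-connected $5$-graph, and showing that every distribution preventing the choice $m_i\leq 2$ violates $5$-edge-connectivity of $G$ via a small edge-cut coming from a proper subset of $X$. The combining step is also more delicate than in (i), since the new within-$X$ edges of $G^*$ and $G^{**}$ do not exist in $G$ and must be traced back to alternating paths through $A$ and $B$ respectively; this bookkeeping is where most of the technical effort will lie.
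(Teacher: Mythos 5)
Your part (i) is correct and is exactly the standard contraction--recombination argument the paper leaves to the reader (note that $|\partial(X)|=5$ forces $|X|$ odd, so both contracted graphs have even order and the matchings at the two contracted vertices partition $\partial(X)$ into singletons that can be aligned). For part (ii), your overall strategy -- reduce each side of the cut to a strictly smaller $5$-edge-connected $5$-graph, invoke minimality, and recombine the two $5$-PDPMs across $X$ -- is the paper's strategy, and your gadget $G^*$ (replace the odd component $A$ by a degree-$5$ vertex $v^*$ plus $(|\partial(A)|-5)/2$ edges inside $X$) is essentially what the paper obtains by contracting $A$ to a vertex $u$ and applying Mader's lifting theorem; citing Mader would spare you the entire case analysis over admissible distributions $(a_1,a_2,a_3)$, since the theorem guarantees a choice of liftings preserving edge-connectivity.

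The genuine gap is the ``symmetric construction'' $G^{**}$. Replacing the even component $B$ by a single new vertex cannot work, for parity reasons: $|V(G)|$ is even and $|B|$ is even, so $G^{**}$ would have $|A|+4$ vertices, an odd number, and hence admits no perfect matching (indeed no $5$-regular graph of odd order exists at all). The degree count fails for the same reason: $|\partial(B)|$ is even, so after a degree-$5$ vertex absorbs five of these edge-ends the remaining $|\partial(B)|-5$ ends are odd in number and cannot be paired into edges inside $X$. The even side must be handled by a different reduction, namely deleting $B$ entirely and adding $a=\frac{1}{2}(n_1+n_2-n_3)$, $b=\frac{1}{2}(-n_1+n_2+n_3)$, $c=\frac{1}{2}(n_1-n_2+n_3)$ parallel edges on the pairs $v_1v_2$, $v_2v_3$, $v_3v_1$, where $n_i=|\partial(B)\cap\partial(v_i)|$; these are nonnegative integers precisely because $|\partial(B)|$ is even and $G$ is $5$-edge-connected (e.g.\ $|\partial(B\cup\{v_3\})|\ge 5$ gives $a\ge 0$). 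With that repair, the recombination you sketch does go through, and more cleanly than ``tracing alternating paths'': each perfect matching on either side meets $[A,X]$ (respectively the edges representing it) in a subset of $X$ of size $1$ or $3$, and since odd subsets of a $3$-set are determined by how many times each $v_i$ occurs, the two multisets of such subsets coincide and the matchings can be paired off.
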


\begin{proof}
$(i)$. The proof follows easily and is left to the reader.

$(ii)$. By contradiction, suppose that  $X=\{v_1,v_2,v_3\}\subset V(G)$ is a $3$-vertex-cut of $G$ such that none of the components of $G-X$ is a single vertex. 
By Observation \ref{rr-3conn} and the edge-connectivity of $G$, the graph $G-X$ has at most three components. First, we consider the case that $G-X$ has exactly three components. Denote the vertex sets of these three components  by $A$, $B$ and $C$. We have that $|\partial_G(S)|=5$, for each $S\in\{A,B,C\}$, and so $|A|=|B|=|C|=1$ by statement $(i)$, a contradiction. 

Next, we assume that $G-X$ has exactly two components
whose vertex sets are denoted by  $A$ and $B$. Since $G$ has even order, we may assume $\vert A \vert$ is odd and $\vert B \vert$ is even.
For each $i \in \{1,2,3\}$, set $n_i= |\partial_G(B) \cap \partial_G(v_i)|$ and let
\begin{align*}
a= \frac{1}{2} \left(n_1+n_2-n_3\right), \quad
b= \frac{1}{2} \left(-n_1+n_2+n_3\right), \quad
c= \frac{1}{2} \left(n_1-n_2+n_3\right).
\end{align*}
We have that $n_1+n_2+n_3=\vert \partial_G(B) \vert$ is even, since $\vert B \vert$ is even. Thus, all of $a,b,c$ are integers. Furthermore, $5 \leq \vert \partial_G(B \cup \{v_3\}) \vert = n_1+n_2+(5-n_3)$ and hence, $a \geq 0$. Analogously, we obtain $b,c \geq 0$. Therefore, we can define a new graph $H_1$ as follows (see Figure \ref{fig:G_H'}).
\begin{align*}
H_1=(G-B)+a \left\{v_1v_2\right\} +b \left\{v_2v_3\right\}+ c \left\{v_3v_1\right\}.
\end{align*}
By the definitions of $a,b,c$, the graph $H_1$ is $5$-regular. Moreover $H_1$ is also $5$-edge-connected. Indeed, let $Y\subseteq V(H_1)$. We can assume, without loss of generality, that $\vert Y \cap \{v_1,v_2,v_3\} \vert  \leq 1$ (otherwise, we argue by taking its complement). By the choices of $a,b$ and $c$, we have $\vert \partial_{H_1}(Y) \vert = \vert \partial_G(Y) \vert \ge 5$ and so $H_1$ is $5$-edge-connected.

Let $H'$ be the graph obtained from $G$ by identifying all vertices in $A$ to a new vertex $u$ and removing all resulting loops, see Figure \ref{fig:G_H'}. Then, $H'$ is $5$-edge-connected and every vertex is of degree $5$ except $u$. Since $|A|$ is odd,  we have that $|\partial_G(A)|$ is odd. Hence, the vertex $u$ has an odd degree of at least $5$ in $H'$. Now, by Theorem~\ref{LiftingTheo}, a new $5$-edge-connected $5$-graph $H_2$ can be obtained from $H'$ by $\frac{1}{2} (d_{H'}(u)-5)$ liftings at $u$, see Figure \ref{fig:G_H'}. We will refer to the edges of $H_2$ obtained by a lifting at $u$ as lifting edges and denote the set of all lifting edges by $\ca L$.

By the minimality of $\vert V(G) \vert$, $H_1$ has a $5$-PDPM $\{N^1_1,\ldots,N^1_5\}$ and $H_2$ has a $5$-PDPM $\{N^2_1, \ldots ,N^2_5\}$. Since $u$ has at most three neighbors in $H_2$, every perfect matching of $H_2$ contains at most one lifting edge. For each $i \in \{1, \ldots ,5\}$, let $N_i$ be the subset of edges of $H'$ defined as follows.
\begin{align*}
N_i = \begin{cases} 
 				N^2_i & \text{ if } N^2_i\cap \ca L=\emptyset ;\\
 				(N^2_i \setminus \{e\}) \cup \{e_1,e_2\} & \text{ if } N^2_i\cap \ca L=\{e\} \text{ and } e_1,e_2 \text{ are the two edges lifted to } e.
 			\end{cases}
\end{align*}

Every perfect matching of $H_1$ contains either one or three edges of $\partial_{H_1}(A)$ by Observation \ref{odd-insect}. Let $s_1$ be the number of integers $i \in \{1,\ldots,5\}$ with $\vert N^1_i \cap \partial_{H_1}(A) \vert =3$, let $s_2 = \vert \ca L \vert$ and let $s'$ be the number of integers $j \in \{1,\ldots,5\}$ with $\vert N_j \cap \partial_{H'}(u) \vert =3$. We have that $s_2=s'$.
Moreover, we have $\partial_G(A)=3s_1+(5-s_1)=5+2s_2$ and so $s_1=s_2=s'$.
Note that $\partial_{H_1}(A) = \partial_G(A)$ and recall that $H'$ is obtained from $G$ by identifying all vertices in $A$ to $u$. As a consequence, the sets of edges $N_1, \ldots, N_5$ of $H'$ and the perfect matchings $N^1_1,\ldots, N^1_5$ of $H_1$ can be combined to obtain a $5$-PDPM of $G$, a contradiction.
\end{proof}

\begin{figure}[htbp]
\centering
\subfigure[$G$]{
\begin{minipage}[t]{7cm}
\centering
\begingroup%
  \makeatletter%
  \providecommand\color[2][]{%
    \errmessage{(Inkscape) Color is used for the text in Inkscape, but the package 'color.sty' is not loaded}%
    \renewcommand\color[2][]{}%
  }%
  \providecommand\transparent[1]{%
    \errmessage{(Inkscape) Transparency is used (non-zero) for the text in Inkscape, but the package 'transparent.sty' is not loaded}%
    \renewcommand\transparent[1]{}%
  }%
  \providecommand\rotatebox[2]{#2}%
  \newcommand*\fsize{\dimexpr\f@size pt\relax}%
  \newcommand*\lineheight[1]{\fontsize{\fsize}{#1\fsize}\selectfont}%
  \ifx\svgwidth\undefined%
    \setlength{\unitlength}{144.50519396bp}%
    \ifx\svgscale\undefined%
      \relax%
    \else%
      \setlength{\unitlength}{\unitlength * \real{\svgscale}}%
    \fi%
  \else%
    \setlength{\unitlength}{\svgwidth}%
  \fi%
  \global\let\svgwidth\undefined%
  \global\let\svgscale\undefined%
  \makeatother%
  \begin{picture}(1,0.92522683)%
    \lineheight{1}%
    \setlength\tabcolsep{0pt}%
    \put(0,0){\includegraphics[width=\unitlength,page=1]{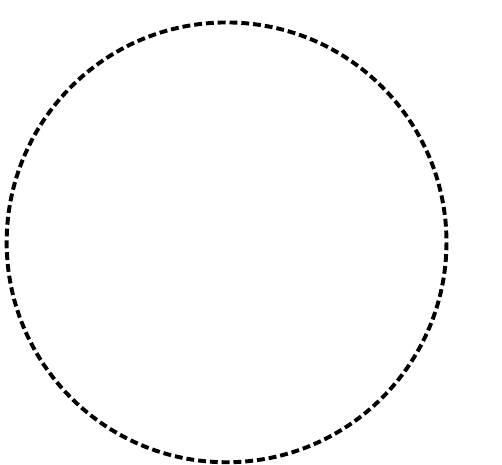}}%
    \put(1.00693368,0.7695325){\color[rgb]{0,0,0}\makebox(0,0)[rt]{\lineheight{0.3125}\smash{\begin{tabular}[t]{r}$B$\end{tabular}}}}%
    \put(0,0){\includegraphics[width=\unitlength,page=2]{FigA.pdf}}%
    \put(0.4822884,0.45832256){\color[rgb]{0,0,0}\makebox(0,0)[rt]{\lineheight{0.3125}\smash{\begin{tabular}[t]{r}$A$\end{tabular}}}}%
    \put(0,0){\includegraphics[width=\unitlength,page=3]{FigA.pdf}}%
  \end{picture}%
\endgroup%

\end{minipage}
}
\subfigure[$H_1$]{
\begin{minipage}[t]{7cm}
\centering
\begingroup%
  \makeatletter%
  \providecommand\color[2][]{%
    \errmessage{(Inkscape) Color is used for the text in Inkscape, but the package 'color.sty' is not loaded}%
    \renewcommand\color[2][]{}%
  }%
  \providecommand\transparent[1]{%
    \errmessage{(Inkscape) Transparency is used (non-zero) for the text in Inkscape, but the package 'transparent.sty' is not loaded}%
    \renewcommand\transparent[1]{}%
  }%
  \providecommand\rotatebox[2]{#2}%
  \newcommand*\fsize{\dimexpr\f@size pt\relax}%
  \newcommand*\lineheight[1]{\fontsize{\fsize}{#1\fsize}\selectfont}%
  \ifx\svgwidth\undefined%
    \setlength{\unitlength}{126.66891383bp}%
    \ifx\svgscale\undefined%
      \relax%
    \else%
      \setlength{\unitlength}{\unitlength * \real{\svgscale}}%
    \fi%
  \else%
    \setlength{\unitlength}{\svgwidth}%
  \fi%
  \global\let\svgwidth\undefined%
  \global\let\svgscale\undefined%
  \makeatother%
  \begin{picture}(1,1.00000785)%
    \lineheight{1}%
    \setlength\tabcolsep{0pt}%
    \put(0,0){\includegraphics[width=\unitlength,page=1]{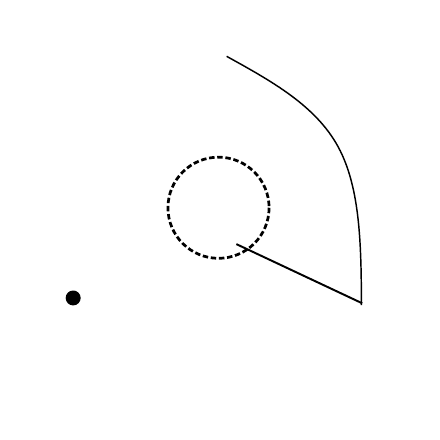}}%
    \put(0.53504477,0.51838339){\color[rgb]{0,0,0}\makebox(0,0)[rt]{\lineheight{0.31250104}\smash{\begin{tabular}[t]{r}$A$\end{tabular}}}}%
    \put(0,0){\includegraphics[width=\unitlength,page=2]{FigB.PDF}}%
  \end{picture}%
\endgroup%

\end{minipage}
}

\vspace{1cm}
\subfigure[$H'$]{
\begin{minipage}[t]{7cm}
\centering
\begingroup%
  \makeatletter%
  \providecommand\color[2][]{%
    \errmessage{(Inkscape) Color is used for the text in Inkscape, but the package 'color.sty' is not loaded}%
    \renewcommand\color[2][]{}%
  }%
  \providecommand\transparent[1]{%
    \errmessage{(Inkscape) Transparency is used (non-zero) for the text in Inkscape, but the package 'transparent.sty' is not loaded}%
    \renewcommand\transparent[1]{}%
  }%
  \providecommand\rotatebox[2]{#2}%
  \newcommand*\fsize{\dimexpr\f@size pt\relax}%
  \newcommand*\lineheight[1]{\fontsize{\fsize}{#1\fsize}\selectfont}%
  \ifx\svgwidth\undefined%
    \setlength{\unitlength}{144.45589866bp}%
    \ifx\svgscale\undefined%
      \relax%
    \else%
      \setlength{\unitlength}{\unitlength * \real{\svgscale}}%
    \fi%
  \else%
    \setlength{\unitlength}{\svgwidth}%
  \fi%
  \global\let\svgwidth\undefined%
  \global\let\svgscale\undefined%
  \makeatother%
  \begin{picture}(1,0.91882881)%
    \lineheight{1}%
    \setlength\tabcolsep{0pt}%
    \put(0,0){\includegraphics[width=\unitlength,page=1]{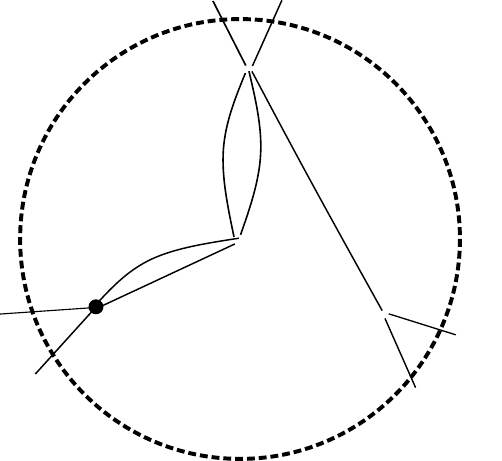}}%
    \put(0.54181646,0.45281869){\color[rgb]{0,0,0}\makebox(0,0)[rt]{\lineheight{0.3125}\smash{\begin{tabular}[t]{r}$u$\end{tabular}}}}%
    \put(1.00693605,0.7697951){\color[rgb]{0,0,0}\makebox(0,0)[rt]{\lineheight{0.3125}\smash{\begin{tabular}[t]{r}$B$\end{tabular}}}}%
    \put(0,0){\includegraphics[width=\unitlength,page=2]{FigC.pdf}}%
  \end{picture}%
\endgroup%

\end{minipage}
}
\subfigure[$H_2$]{
\begin{minipage}[t]{7cm}
\centering
\begingroup%
  \makeatletter%
  \providecommand\color[2][]{%
    \errmessage{(Inkscape) Color is used for the text in Inkscape, but the package 'color.sty' is not loaded}%
    \renewcommand\color[2][]{}%
  }%
  \providecommand\transparent[1]{%
    \errmessage{(Inkscape) Transparency is used (non-zero) for the text in Inkscape, but the package 'transparent.sty' is not loaded}%
    \renewcommand\transparent[1]{}%
  }%
  \providecommand\rotatebox[2]{#2}%
  \newcommand*\fsize{\dimexpr\f@size pt\relax}%
  \newcommand*\lineheight[1]{\fontsize{\fsize}{#1\fsize}\selectfont}%
  \ifx\svgwidth\undefined%
    \setlength{\unitlength}{144.45589866bp}%
    \ifx\svgscale\undefined%
      \relax%
    \else%
      \setlength{\unitlength}{\unitlength * \real{\svgscale}}%
    \fi%
  \else%
    \setlength{\unitlength}{\svgwidth}%
  \fi%
  \global\let\svgwidth\undefined%
  \global\let\svgscale\undefined%
  \makeatother%
  \begin{picture}(1,0.91882881)%
    \lineheight{1}%
    \setlength\tabcolsep{0pt}%
    \put(0,0){\includegraphics[width=\unitlength,page=1]{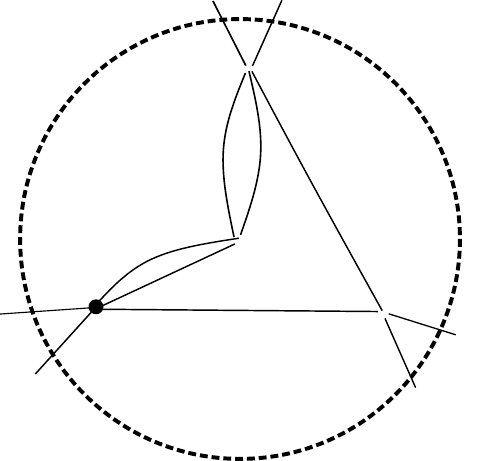}}%
    \put(0.53955223,0.45508292){\color[rgb]{0,0,0}\makebox(0,0)[rt]{\lineheight{0.3125}\smash{\begin{tabular}[t]{r}$u$\end{tabular}}}}%
    \put(1.00693605,0.7697951){\color[rgb]{0,0,0}\makebox(0,0)[rt]{\lineheight{0.3125}\smash{\begin{tabular}[t]{r}$B$\end{tabular}}}}%
    \put(0,0){\includegraphics[width=\unitlength,page=2]{FigD.pdf}}%
  \end{picture}%
\endgroup%

\end{minipage}
}
\caption{An example for the graphs $H_1$, $H'$ and $H_2$ obtained from $G$ in Theorem \ref{eq-st1}.}
\label{fig:G_H'}
\end{figure}

\section{Acknowledgments}

Major parts of this work were carried out during a stay of Davide Mattiolo at Paderborn University, sponsored by the Heinrich Hertz-Stiftung. 

\bibliography{Lit_reg_graphs}{}
\addcontentsline{toc}{section}{References}
\bibliographystyle{plain}

\newpage
\appendix

\section{A sketch of the proof of Theorem \ref{FR_implication_cubic_case}}\label{Prof_FR_implication_cubic_case}

In order to prove Theorem \ref{FR_implication_cubic_case} we adjust Theorem \ref{eq-st1} as follows.

\begin{theo}\label{eq-5reg}
 The following statements are equivalent.\\
 (i) Every  $5$-edge-connected $5$-graph with an underlying cubic graph 
 has a 2-$PDPM$. \\
 (ii) For every $5$-edge-connected $5$-graph $G$ with an underlying cubic graph and every simple $e\in E(G)$, there is a 2-$PDPM$ containing $e$. \\
 (iii) For every $5$-edge-connected $5$-graph $G$ with an underlying cubic graph and every simple $e\in E(G)$, there is a 2-$PDPM$ avoiding $e$.\\
 (iv)  For every $5$-edge-connected $5$-graph $G$ with an underlying cubic graph and every simple $e \in E(G)$ and every two parallel edges $e_1,e_2$ adjacent with $e$, there is a 2-$PDPM$ containing $e$
 and avoiding $e_1, e_2$. 
\end{theo}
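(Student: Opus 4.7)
The four implications $(ii) \Rightarrow (i)$, $(iii) \Rightarrow (i)$, $(iv) \Rightarrow (i)$ are immediate from the definitions, so the task reduces to establishing $(i) \Rightarrow (ii)$, $(i) \Rightarrow (iii)$, and $(ii) \Rightarrow (iv)$. The plan is to mimic the corresponding arguments in Theorem~\ref{eq-st1} for $r=5$, $k=2$, but to replace the $C_{2r}$- and $K_4$-based auxiliary graphs used there with cubic-underlying variants, so that after applying Definition~\ref{Replace-H} with copies of $G$ the resulting auxiliary $H'$ is still a $5$-edge-connected $5$-graph \emph{whose underlying graph is cubic}, making hypothesis $(i)$ (respectively $(ii)$) applicable to it.

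For $(i) \Rightarrow (ii), (iii)$, fix a simple edge $e = vv_1$ of $G$. Since $e$ is simple and the underlying of $G$ is cubic, the multiplicity pattern at $v$ is either $(1,1,3)$ or $(1,2,2)$, with $e$ in a slot of multiplicity one. The plan is to design a small $5$-edge-connected $5$-regular gadget $H$ with cubic underlying that contains two distinguished vertices $u^1, u^2$ of the same multiplicity pattern as $v$, whose designated multiplicity-one slots are two distinct edges $f^1, f^2$ of $H$. Replacing $u^1, u^2$ via Definition~\ref{Replace-H} with two disjoint copies of $(G,v)$, aligning the simple slots with $f^1, f^2$, produces a graph $H'$ whose underlying stays cubic because multiplicities are matched in the replacement; Lemma~\ref{GU-rcon} ensures $5$-edge-connectivity of $H'$. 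Now apply $(i)$ to $H'$ and, using Observation~\ref{odd-insect} on an odd cut that separates the two embedded copies of $G$, show that the resulting $2$-PDPM restricts to a $2$-PDPM of each $G^i$; selecting the copy in which the embedded $e$ is used (resp.\ unused) yields $(ii)$ (resp.\ $(iii)$).

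For $(ii) \Rightarrow (iv)$, fix a simple edge $e = vv_1$ and two parallel edges $e_1, e_2$ adjacent to $e$. Since the underlying is cubic and $e_1, e_2$ share both endpoints, they share a common endpoint $u \in \{v, v_1\}$ with $e$, and $u$ necessarily has multiplicity pattern $(1,2,2)$, with $e$ in the simple slot and $e_1, e_2$ in one of the multiplicity-two slots. Mirroring the $K_4$-based construction of Theorem~\ref{eq-st1} $(ii) \Rightarrow (iv)$, embed two copies $G^1, G^2$ of $G$ into an auxiliary $5$-edge-connected $5$-graph $H'$ with cubic underlying, chosen so that $H'$ contains a designated simple edge $f$ whose role is analogous to the edge $u_2 u_4$ in the proof of Theorem~\ref{eq-st1}. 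Applying $(ii)$ to $f$ in $H'$ and combining Observation~\ref{odd-insect} with a counting argument on the edges between the embedded copies and the gadget then forces the $2$-PDPM to restrict to a $2$-PDPM of $G$ that contains $e$ and avoids both $e_1$ and $e_2$.

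The principal obstacle in all three implications is the construction of the auxiliary gadgets. Whereas Theorem~\ref{eq-st1} freely employs heavy multi-edges in its $C_{2r}$ and $K_4$ auxiliaries, here every gadget and every edge correspondence prescribed by Definition~\ref{Replace-H} must respect the cubic underlying condition, which forces a case split based on the multiplicity pattern at $v$ (and at the common endpoint of $e_1, e_2$). I expect the required gadgets to be small but to need explicit verification of $5$-edge-connectivity via Observations~\ref{1edgeobs}, \ref{2edgeobs} and Lemma~\ref{GU-rcon}, together with an explicit check that the replacement of Definition~\ref{Replace-H} can be carried out while preserving the cubic underlying on both sides.
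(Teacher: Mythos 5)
Your overall strategy---reuse the constructions of Theorem \ref{eq-st1} for $r=5$, $k=2$ while keeping the underlying graph of the auxiliary graph cubic---is the right one and is also the paper's. But the proposal has two genuine gaps. First, the gadgets are never constructed; you only state that you expect suitable small gadgets to exist, and the entire content of the proof lies there. The paper's solution is concrete: it keeps the $C_{10}$-based graph $H'$ with five copies of $G$ and replaces the two degree-$5$ hub vertices $u,u'$ (the only vertices violating the cubic-underlying condition) by copies of $W_5+E(C_5)$, whose hub has five distinct neighbours. Your alternative of a gadget containing only \emph{two} copies of $G$ with designated simple slots $f^1,f^2$ has an additional problem: nothing forces a single $2$-PDPM of $H'$ to use the slot edge in one copy and avoid it in the other, so you cannot simultaneously extract witnesses for $(ii)$ and $(iii)$ as claimed. (Note also that $f^1,f^2$ cannot share an endpoint, since a vertex with two simple edges would have multiplicity pattern $(1,1,3)$, which is impossible in a $5$-edge-connected $5$-regular graph because the corresponding vertex pair would give a $4$-edge-cut; this also removes the case split you anticipate at $v$.) In the paper this dichotomy comes precisely from the hub $u$ being joined by a single edge to each of the five copies.

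Second, and more importantly, the step ``using Observation \ref{odd-insect} \dots show that the resulting $2$-PDPM restricts to a $2$-PDPM of each $G^i$'' fails. Observation \ref{odd-insect} only gives that each matching meets $\partial_{H''}(V(G^i)\setminus\{v^i\})$ an odd number of times; once the degree-$5$ hubs are replaced by wheel gadgets, the counting argument of Theorem \ref{eq-st1} that forces this number to equal $1$ no longer applies, and a matching meeting that cut three times does not restrict to a perfect matching of $G^i$. Handling this is the substance of the paper's argument: it analyses whether $|N_j\cap\partial_{H''}(V(W^l)\setminus\{w^l\})|$ equals $1$ or $3$ and splits into three cases to locate a copy $G^i$ with $m_{i1}=m_{i2}=1$ together with the required containment or avoidance. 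Your sketch omits this entirely. For $(ii)\Rightarrow(iv)$ your outline is closer to what is needed, since the $K_4$-based graph for $r=5$ already has cubic underlying graph once the multiplicities are aligned as $\mu_{H'}(v_1^1,v_1^3)=1$ and the remaining attachments have multiplicity $2$, and $u_2u_4$ is the natural simple edge to which $(ii)$ is applied; but the decisive count at $u_2$ showing that $N_1\cup N_2$ misses both parallel edges to one of $v_2^1,v_2^3$ is again left out.
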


\begin{proof}
Clearly,  each of $(ii)$, $(iii)$ and $(iv)$ implies 
$(i)$. Thus, it suffices to prove that $(i)$ implies $(ii)$; $(i)$ implies $(iii)$; and $(ii)$ implies $(iv)$.

	$(i) \Rightarrow (ii), (iii)$. Let $G$ be a $5$-edge-connected $5$-graph whose underlying graph is cubic and let $e=vv_1$ be a simple edge of $G$. Let $H$ and $H'$ be the graphs constructed in the part "$(i)$ $\Rightarrow$ $(ii), (iii)$" of the proof of Theorem \ref{eq-st1} by using $C_{2r}$ and 
$r$ copies of $G$ in the case $r=5$ (see Figures \ref{fig1} (a) and \ref{fig2} (a)). Clearly, the graph $H'$ can be constructed from $H$ such that every vertex of $V(H') \setminus \{u, u' \}$ has degree $3$ in the underlying graph of $H'$. Let $W=W_5 + E(C_5)$. Now, according to Definition \ref{Replace-H} replace $u$ by $(W^{1},w^{1})$ and replace $u'$ by $(W^{2},w^{2})$, where $W^l$ is a copy of $W$ and $w^l$ is the vertex of $W^l$ corresponding to $w$, for $l \in \{1,2\}$. The resulting graph, denoted by $H''$, is a $5$-edge-connected $5$-graph by Lemma \ref{GU-rcon}. Since its underlying graph is cubic, $H''$ has two disjoint perfect matchings $N_1,N_2$ by statement $(i)$. Let $N=N_1 \cup N_2$ and recall that $I=\{1,3,5,7,9\}$. For every $i \in I$ and $j \in \{1,2\}$, Observation \ref{odd-insect} implies $m_{ij} \in \{1,3\}$, where $m_{ij}=\vert \partial_{H''}(V(G^i)\setminus \{v^i\}) \cap N_j \vert$. Furthermore, we have $\vert \partial_{H''}(V(W^l)\setminus \{w^l\}) \cap N_j \vert \in \{1,3\}$ for every $l,j \in \{1,2\}$ also by Observation \ref{odd-insect}. Thus, $\vert \partial_{H''}(V(W^l)\setminus \{w^l\}) \cap N \vert \in \{2,4\}$ for every $l \in \{1,2\}$. As a consequence, there is an integer $i \in I$ such that $N$ does not contain the unique edge in $[v_1^i,V(W^1)\setminus\{w^1\}]_{H''}$. We have $m_{i1} = m_{i2} = 1$. Therefore, $G^i$ has two disjoint perfect matchings such that $v^iv_1^i$ is in none of them, which proves statement $(iii)$. For statement  $(ii)$, we consider the following cases.

	{\bf Case 1}. $\vert N \cap \partial_{H''}(V(W^1)\setminus \{w^1\}) \vert=\vert N \cap \partial_{H''}(V(W^2)\setminus \{w^2\}) \vert=2$.

	In this case, $H'$ has a $2$-PDPM, and hence, statement $(ii)$ follows by the same argumentation as in the proof of Theorem \ref{eq-st1} part "$(i) \Rightarrow (ii), (iii)$". 

	{\bf Case 2}. Without loss of generality $\vert N_1 \cap \partial_{H''}(V(W^1)\setminus \{w^1\}) \vert =3$.

	In this case, there is an integer $i \in I$, say $i=1$, such that $N_1$ contains the unique edge in $[v_1^i,V(W^1)\setminus\{w^1\}]_{H''}$ and the unique edge in $[v_1^{i+2},V(W^1)\setminus\{w^{1}\}]_{H''}$. The set $N_1$ contains exactly one edge incident with $u_2$ and thus, $m_{11} = 1$ or $m_{31} =1$ by the construction of $H''$. Therefore, $G^1$ has two disjoint perfect matchings such that $v^1v_1^1$ is in one of them or $G^3$ has two disjoint perfect matchings such that $v^3v_1^3$ is in one of them, which proves statement $(ii)$.

	{\bf Case 3}. Without loss of generality $\vert N_1 \cap \partial_{H''}(V(W^2)\setminus \{w^2\}) \vert =3$.

	In this case, there is an integer $i \in I$, say $i=3$, such that $N_1$ contains the unique edge in $[u_{i-1},V(W^2)\setminus\{w^2\}]_{H''}$ and the unique edge in $[u_{i+1},V(W^2)\setminus\{w^{2}\}]_{H''}$. As a consequence, $N_1$ contains the unique edge in $[v_1^3,V(W^1)\setminus\{w^{1}\}]_{H''}$ and $m_{31}=1$. Therefore, $G^3$ has two disjoint perfect matchings such that $v^3v_1^3$ is in one of them, which proves statement $(ii)$.

	$(ii)$ $\Rightarrow$ $(iv)$. 
Let $G$ be a $5$-edge-connected $5$-graph whose underlying graph is cubic, let $v \in V(G)$ and let $N_G(v)=\{v_1,v_2,v_3\}$. Furthermore, let $\mu_G(v,v_1)=1$, $\mu_G(v,v_2)=\mu_G(v,v_3)=2$, let $e$ be the edge connecting $v$ and $v_1$ and let $e_1,e_2$ be the two parallel edges connecting $v$ and $v_2$. We show that there are two disjoint perfect matchings such that their union contains $e$ but neither $e_1$ nor $e_2$.

	Let $G^1$ and $G^3$ be two copies of $G$ in which the vertices and edges are labeled accordingly by using an upper index. Let $H$ be the graph constructed in the part "$(ii)$ $\Rightarrow$ $(iv)$" of the proof of Theorem \ref{eq-st1} by using $K_4$ in the case $r=5$, see Figure \ref{fig3} (a). According to Definition \ref{Replace-H}, construct a new graph $H'$ from $H$ by replacing $u_1$ with $(G^1,v^1)$ and replacing $u_3$ with $(G^3,v^3)$ such that $\mu_{H'}(v_1^1,v_1^3)=1$ and $\mu_{H'}(v_2^1,u_2)=\mu_{H'}(v_2^3,u_2)=\mu_{H'}(v_3^1,u_4)=\mu_{H'}(v_3^3,u_4)=2$. The graph $H'$ is $5$-edge-connected and $5$-regular by Lemma \ref{GU-rcon} and its underlying graph is cubic. Therefore, by statement $(ii)$ there are two disjoint perfect matchings $N_1,N_2$ of $H'$ such that $u_2u_4 \in N_1$. By Observation \ref{odd-insect}, we have $v_1^1v_1^3 \in N_1$ and $\vert \partial_{H'}(V(G^{i}) \setminus \{v^i\}) \cap N_j \vert =1$ for every $i \in \{1,3\}$ and every $j \in \{1,2\}$. Furthermore, $N_1 \cup N_2$ either does not contain the two edges connecting $v_2^1$ and $u_2$ or does not contain the two edges connecting $v_2^3$ and $u_2$. In the first case, $G^1$ has two disjoint perfect matchings such that their union contains $e^1$ but neither $e_1^1$ nor $e_2^1$; in the second case $G^3$ has two disjoint perfect matchings such that their union contains $e^3$ but neither $e_1^3$ nor $e_2^3$. This proves statement $(iv)$.
\end{proof}

Theorem \ref{FR_implication_cubic_case} can be proved like Theorem \ref{FR_implication} by using Theorem \ref{eq-5reg} instead of Theorem \ref{eq-st1}.

\end{document}